\newtheoremstyle{myremark} 
    {7pt}                    
    {7pt}                    
    {}  	                 
    {}                           
    {\bf}       	         
    {.}                          
    {.5em}                       
    {}  
\theoremstyle{plain}
\newtheorem{lemma}{Lemma}[section]
\newtheorem{theorem}[lemma]{Theorem}
\newtheorem{definition}[lemma]{Definition}
\newtheorem{proposition}[lemma]{Proposition}
\newtheorem{conjecture}[lemma]{Conjecture}
\newtheorem{claim}{Claim}
\theoremstyle{myremark}
\newtheorem{remark}[lemma]{Remark}
\newcommand{\zet}{\mathbb{Z}}
\renewcommand{\subset}{\subseteq}
\newcommand{\lk}{\mathrm{lk}}
\newcommand{\Michal}[1]{}
\newcommand{\Honza}[1]{}
\newcommand{\HONZA}[1]{}
\begin{document}
\title[Dense flag triangulations of $3$-manifolds]{Dense flag triangulations of $3$-manifolds via extremal graph theory}
\author[Micha{\l} Adamaszek]{Micha{\l} Adamaszek}
\address{Fachbereich Mathematik, Universit\"at Bremen
      \newline Bibliothekstr. 1, 28359 Bremen, Germany}
\email{aszek@mimuw.edu.pl}
\author[Jan Hladk\'y]{Jan Hladk\'y}
\address{Mathematics Institute and DIMAP,
      \newline University of Warwick, Coventry, CV4 7AL, UK}
\email{honzahladky@gmail.com}
\thanks{Research of MA was carried out while a member of the Centre for Discrete
        Mathematics and its Applications (DIMAP), supported by the EPSRC award EP/D063191/1. JH
        is an EPSRC Research Fellow.}

\keywords{$f$-vector; simplicial complex; Gal's conjecture; flag triangulations of $3$-manifolds}
\subjclass[2010]{primary: 05E45, secondary: 05A15}

\begin{abstract}
We characterize $f$-vectors of sufficiently large three-dimensional flag
Gorenstein$^*$ complexes, essentially confirming a conjecture of Gal~[Discrete Comput. Geom., 34 (2), 269--284, 2005]. In
particular, this characterizes $f$-vectors of large flag triangulations of the $3$-sphere. Actually, our main result is more general and describes the structure of closed flag 3-manifolds which have many edges. 

Looking at the 1-skeleta of these manifolds we reduce the problem to a certain
question in extremal graph theory. We then resolve this question by employing the Supersaturation Theorem of Erd\H{o}s and Simonovits.
\end{abstract}
\maketitle

\section{Introduction}
\label{section:intro}

One of the trends in enumerative combinatorics is to classify face numbers of
various families of simplicial complexes. In this paper we study flag triangulations of closed $3$-manifolds with sufficiently many vertices and high edge density. As a consequence we confirm, for sufficiently large number of vertices, a conjecture of Gal regarding face vectors of flag triangulations of generalized homology $3$-spheres. 

If $K$ is a finite simplicial complex and $\sigma \in K$ is a face we denote by $|\sigma|$ its number of vertices and by $\dim\sigma=|\sigma|-1$ its dimension. The dimension of $K$, $\dim K$, is the maximum over all $\sigma\in K$ of $\dim \sigma$.

The \emph{$f$-vector} of a simplicial complex $K$ of dimension $d$ is the
sequence
\begin{equation}
(f_{-1},f_0,\ldots,f_d)
\end{equation}
where $f_i$ is the number of faces of dimension $i$. By convention, we always define $f_{-1}=1$. The
\emph{$h$-vector} of $K$ is the sequence
\begin{equation}
(h_0,\ldots,h_{d+1})
\end{equation}
determined by the equation\footnote{Note that we consistently use $d$ for the dimension of $K$, rather than the cardinality of its largest face, hence the indices and exponents in most formulae are shifted by $1$ compared to what they usually look like.}
\begin{equation}
\sum_{i=0}^{d+1} h_ix^{d+1-i}=\sum_{i=-1}^d f_{i}(x-1)^{d-i}.
\end{equation}
Of course the $f$-vector and the $h$-vector determine one another and carry the
same information, but the $h$-vector often enjoys better combinatorial properties; the Dehn-Sommerville equation~\eqref{eq:DS} below being one example. Note that $h_0=1$.

Next we introduce the class of Gorenstein$^*$ and Eulerian complexes. The reader not interested in this level of generality can equally well think about simplicial complexes which triangulate a standard sphere. Recall that if $\sigma\in K$ is a face then the \emph{link} of $\sigma$ in $K$, denoted $\lk_K\sigma$ is the subcomplex $\{\tau\in K~|~\tau\cap\sigma=\emptyset,\ \tau\cup\sigma\in K\}$.

A simplicial complex $K$ of dimension $d$ is a \emph{generalized homology sphere} (or \emph{Gorenstein$^*$ complex}) if for every face $\sigma\in K$ the homology of $\lk_K \sigma$ is the same as the homology of a sphere of dimension $d-|\sigma|$. In particular, when $\sigma=\emptyset$, this means that $K$ itself has the homology of a $d$-sphere. We are going to use the short name `$d$-GHS'. A simplicial complex $K$ of dimension $d$ is \emph{Eulerian} if for every face $\sigma\in K$ the Euler characteristic of $\lk_K \sigma$ is the same as that of a sphere of dimension $d-|\sigma|$.

Any triangulation of the standard $d$-sphere is a $d$-GHS and every $d$-GHS is Eulerian. More generally, if $K$ is a triangulation of a closed topological manifold and $\sigma\neq\emptyset$ is a face of $K$ then $\lk_K\sigma$ is a $(d-|\sigma|)$-GHS. By the Poincar\'e duality the Euler characteristic of an odd-dimensional closed manifold is $0$, hence every such manifold is Eulerian. (A closed manifold means a compact manifold without boundary.)

Any Eulerian complex of dimension $d$ satisfies the classical \emph{Dehn-Sommerville equations}
\begin{equation}\label{eq:DS}h_i=h_{d+1-i}\end{equation}
and, following Gal~\cite{SRGal}, one can encode the coefficients $h_i$ in a
shorter, integer-valued \emph{$\gamma$-vector}
\begin{equation}
(\gamma_0,\ldots,\gamma_{\lfloor\frac{d+1}{2}\rfloor})
\end{equation}
determined by the equation
\begin{equation}
\sum_{i=0}^{d+1} h_ix^i=\sum_{i=0}^{\lfloor\frac{d+1}{2}\rfloor} \gamma_{i}x^i(x+1)^{d+1-2i}.
\end{equation}
We always have $\gamma_0=1$.

The classification of $h$- (or $f$-, $\gamma$-) vectors of generalized homology spheres is of great interest in the field. The complete classification is predicted by the celebrated $g$-conjecture of McMullen~\cite{McMullen71}. In this work we pick up a related but somewhat different research line started by Gal, who investigated these parameters for the restricted family of flag complexes.

A simplicial complex is \emph{flag} if all its minimal non-faces have dimension $1$ or, equivalently, if it is the clique complex of its $1$-skeleton. The latter means that faces of $K$ correspond to cliques in $K^{(1)}$, the graph which is the $1$-dimensional skeleton of $K$. For flag generalized homology spheres the $\gamma$-vector is the most efficient and interesting parameter. The major conjecture of Gal~\cite[Conj. 2.1.7]{SRGal}, which states that the $\gamma$-vector of a flag $d$-GHS is non-negative, is known to hold for $d\leq 4$ \cite[Cor.2.2.3]{SRGal}. For any flag $(2d-1)$-GHS this conjecture is a strengthening of the famous Charney-Davis conjecture \cite{CharneyDavis}. On the other hand, Gal's conjecture itself has a stronger version which states that the $\gamma$-vector of a flag $d$-GHS is an $f$-vector of some flag complex \cite{NevoPetersen}. See \cite{NevoPetTenner} and references therein for progress in that area.

If $K$ and $L$ are two simplicial complexes with disjoint vertex sets then their \emph{join} $K\ast L$ is a simplicial complex with vertex set $V(K)\cup V(L)$ whose faces are all unions $\tau\cup\sigma$ for $\tau\in K$, $\sigma\in L$. It is a standard fact that $S^k\ast S^l=S^{k+l+1}$ for triangulated spheres $S^k$, $S^l$ with $k,l\geq -1$.

Following Murai and Nevo~\cite{MuraiNevo}, let $\Lambda_d$ denote the set of all $\gamma$-vectors of flag $d$-GHSs. When $d=1,2$ then the $(k+4)$-gon or its join with the two-point sphere $S^0$ are simplicial $d$-spheres with $\gamma$-vector $(1,k)$ for any integer $k\geq 0$, and by the previous discussion these exhaust $\Lambda_1$ and $\Lambda_2$, i.e., we have $$\Lambda_1=\Lambda_2=\left\{(1,k)\in\zet^2\::\:  k\ge 0\right\}\;.$$ Gal~\cite[Cor. 3.1.7]{SRGal} proved that $\gamma_2\leq\gamma_1^2/4$ must hold for any $\gamma$-vector $(1,\gamma_1,\gamma_2)$ in $\Lambda_3$ or $\Lambda_4$ and a simple join construction \cite[Thm. 5.1.ii]{MuraiNevo} shows that this is tight in dimension $4$, that is
\begin{equation*}
\Lambda_4\ =\ \left\{(1,\gamma_1,\gamma_2)\in\zet^3\::\: \gamma_2\leq
\frac{\gamma_1^2}{4}, \quad \gamma_1,\gamma_2\geq 0\right\}.
\end{equation*}
Going back to dimension $3$, Gal~\cite[Thm. 3.2.1]{SRGal} showed that
\begin{equation}\label{eq:inclusion}
\Lambda_3 \ \supseteq \ \left\{(1,\gamma_1,\gamma_2)\in\zet^3\::\: \gamma_2\leq
\frac{(\gamma_1-1)^2}{4}, \quad \gamma_1,\gamma_2\geq 0\right\}\ \cup
\left\{(1,k+l,kl)\in\zet^3\::\: k,l\geq 0\right\}.
\end{equation}
The elements of the first set can be realized as $\gamma$-vectors of some
appropriate iterated edge subdivisions of the boundary of the cross-polytope.
The elements of the second kind are the $\gamma$-vectors of a join of a
$(k+4)$-gon with an $(l+4)$-gon. 

Gal then conjectures that
the inclusion~\eqref{eq:inclusion} is in fact an equality. Since the $\gamma$-vector of a flag $3$-GHS is non-negative, the stronger version of that conjecture is the following (see \cite[Con. 3.2.2]{SRGal} or \cite[Conj. 5.2]{MuraiNevo}).
\begin{conjecture}\label{conj:galmain}
If $(1,\gamma_1,\gamma_2)$ is the $\gamma$-vector of a flag $3$-GHS $K$ and $\gamma_2>\frac{(\gamma_1-1)^2}{4}$ then $K$ is a join of two polygons.
\end{conjecture}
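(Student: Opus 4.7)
The plan is to pass to the $1$-skeleton $G := K^{(1)}$, recast the $\gamma$-vector hypothesis as an edge-count condition slightly above the Tur\'an bound for triangles, and then combine the Supersaturation Theorem of Erd\H{o}s--Simonovits with the rigidity coming from the links in $K$ to force $K$ into a join form. A direct manipulation of the defining identities for the $f$-, $h$- and $\gamma$-vectors in dimension $3$ yields $\gamma_1 = f_0 - 8$ and $f_1 = 24 + 5\gamma_1 + \gamma_2$, so setting $n := f_0$ the hypothesis $\gamma_2 > (\gamma_1-1)^2/4$ translates to
\begin{equation*}
e(G) \;>\; \frac{n^2 + 2n + 17}{4},
\end{equation*}
which exceeds the Tur\'an number $\mathrm{ex}(n,K_3) = \lfloor n^2/4 \rfloor$ by a linear term $\Theta(n)$. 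As hinted by the abstract, the conjecture will be established for $n$ sufficiently large.

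Next I would extract the local structure imposed on $G$ by the flag $3$-GHS hypothesis on $K$. By the classification preceding the conjecture, every vertex link $\lk_K v$ is a flag $2$-GHS, hence a suspension $\susp(C_m)$ of a polygon, and every edge link is itself a polygon. In graph-theoretic terms this says: for each vertex $v$, the induced subgraph of $G$ on $N_G(v)$ is the disjoint union of a pair of non-adjacent ``apex'' vertices (each adjacent to everything else in $N_G(v)$) with a cycle $C_m$ ($m\ge 4$) on the remaining neighbours; and for each edge $\{u,v\}$ of $G$, the common neighbourhood $N_G(u)\cap N_G(v)$ induces a cycle of length $\ge 4$. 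These conditions couple the topology of $K$ very tightly to the combinatorics of $G$ and place $G$ in a restricted extremal class to which one can hope to apply sharp stability-type results. Since $e(G)$ exceeds $\mathrm{ex}(n,K_3)$ by $\Theta(n)$, the Supersaturation Theorem delivers many copies of $K_3$ in $G$, and the vertex-link condition upgrades these to many copies of $K_4$; combined with the Erd\H{o}s--Simonovits stability version of Tur\'an's theorem one extracts a bipartition $V(G) = A \dcup B$ with $|A|,|B|\approx n/2$, almost all cross-pairs being edges, and only $o(n^2)$ edges falling inside $A$ or inside $B$, i.e.\ $G$ is \emph{approximately} the $1$-skeleton of a join $C_a\ast C_b$.

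The main obstacle is to upgrade this approximate picture to the exact equality $G = C_a\cup C_b\cup K_{a,b}$, after which flagness of $K$ immediately forces $K = C_a\ast C_b$. Here the local rigidity from the previous paragraph is decisive: for each $v\in A$ the decomposition $\lk_K v = \susp(C_m)$ must place the two apex vertices on the same side of the partition as $v$ and the equator cycle on the opposite side, so any missing cross-edge or any ``rogue'' inside-edge would obstruct the required cycle in some vertex- or edge-link, producing an incompatibility with the flag $3$-GHS hypothesis. A careful quantitative count, leveraging the linear gap between $e(G)$ and the extremal count $(n^2+2n+17)/4$, should rule out all such defects once $n$ is large enough. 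I expect the bulk of the technical work to lie in this final upgrade step, and it is precisely here that the ``sufficiently large $n$'' assumption enters: the $o(n^2)$ error from stability must be absorbed by the $\Theta(n)$ excess in the edge count, which is only possible once $n$ is large.
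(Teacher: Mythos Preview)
Your high-level strategy --- translate to an edge-count condition, apply Erd\H{o}s--Simonovits stability to get an approximate bipartition, then use the local link structure to rigidify --- is the same as the paper's. However, there are two concrete errors that would derail the argument as written.

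First, your description of the vertex links is wrong. You claim that every vertex link $\lk_K v$, being a flag $2$-GHS, must be a suspension $\susp(C_m)$ of a polygon. This is false: the classification $\Lambda_2=\{(1,k):k\ge 0\}$ only says that every $\gamma$-vector is \emph{realized} by some suspension, not that every flag $2$-sphere \emph{is} one. A flag triangulation of $S^2$ is, in graph terms, any planar graph in which every face (including the unbounded one) is a triangle and every such face bounds a $3$-clique --- there are exponentially many of these, and almost none have a pair of global apex vertices. The paper extracts from this only what is actually true: each $\lk_G v$ is planar (hence $K_{3,3}$-free and with at most $3h-6$ edges), each $\lk_G e$ is a cycle of length $\ge 4$, and each triangle link is two isolated points. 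Your subsequent rigidity argument (``the two apex vertices must lie on the same side as $v$, the equator on the other side'') collapses without the suspension structure.

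Second, your use of supersaturation is backwards. You write that since $e(G)$ exceeds $\mathrm{ex}(n,K_3)$ the Supersaturation Theorem ``delivers many copies of $K_3$''. The excess here is only $\Theta(n)$, not $\Theta(n^2)$, so that direction gives nothing useful; more importantly, the paper's input to stability is that $G$ has \emph{few} triangles --- exactly $2(m-n)=O(n^2)=o(n^3)$ by the Dehn--Sommerville relations --- and it is this upper bound, together with $e(G)\ge n^2/4$, that forces $G$ to be $\varepsilon$-close to $K_{n/2,n/2}$.

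Finally, you should be aware that the ``upgrade'' step you sketch in a paragraph is essentially the entire paper. With only planarity (rather than a suspension structure) available in the vertex links, one does not get immediate contradictions from a single rogue edge; instead the paper shows $G$ is $t$-joinlike for some $t\le 2$ (Definition~\ref{def:joinlike}), and then spends Sections~\ref{section:2large} and~\ref{section:exact} on a delicate case analysis --- using Euler's formula on $\lk_G q$ to count missing cross-edges --- to eliminate the exceptional vertices. The linear gap $e(G)-\tfrac14(n^2+2n+17)$ is exactly what is fought over in those sections, and several configurations come within $O(1)$ of the bound.
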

Also, note that the two constructions which show the
inclusion~\eqref{eq:inclusion} are flag triangulations of the $3$-sphere. Thus --- if true ---
Conjecture~\ref{conj:galmain} provides a characterization of
$\gamma$-vectors (or $f$-vectors) of flag triangulations of the $3$-sphere. 
Even this special case of characterization of
$\gamma$-vectors of flag triangulations of the $3$-sphere is open.
The conjecture was verified for order complexes of posets~\cite{MuraiNevo}. 

To make the following discussion more concrete, suppose that $K$ is an Eulerian complex of dimension $3$ with face numbers $(1,f_0,f_1,f_2,f_3)$. Then the Dehn-Sommerville relations translate into
\begin{equation}
\label{eq:f2f3}
f_2=2(f_1-f_0),\quad f_3=f_1-f_0.
\end{equation}
Moreover, we find
\begin{equation}
\label{eq:gammas}
\gamma_1=f_0-8,\quad \gamma_2=f_1-5f_0+16
\end{equation}
and the conditions $(\gamma_1-1)^2/4<\gamma_2\leq \gamma_1^2/4$ are equivalent to
\begin{equation}
\label{eq:turanbound}
\frac{1}{4}(f_0^2+2f_0+17)<f_1\leq \frac{1}{4}f_0^2+f_0.
\end{equation}

\subsection*{Our results}
Below is the main result of the paper. It determines the structure of closed flag $3$-manifolds which have many edges.
\begin{theorem}
\label{thm:main}
There exists a number $n_0$ such that the following holds. If $M$ is a flag triangulation of a closed $3$-manifold with $f_0\geq n_0$ vertices, $f_1$ edges, and such that $f_1>\frac14(f_0^2+2f_0+17)$ then $M$ is a join of two polygons (and, in particular, it is homeomorphic to $S^3$).
\end{theorem}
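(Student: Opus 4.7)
Set $G := M^{(1)}$ and $n := f_0$. Since $M$ is flag, its face structure is captured by $G$ with faces corresponding to cliques, and the fact that $M$ is a closed 3-manifold gives two strong local link conditions: for every vertex $v$ the induced graph $G[N(v)]$ is the 1-skeleton of a flag 2-sphere (in particular a maximal planar graph with $3\deg(v)-6$ edges), and for every edge $uv$ the common neighbourhood $G[N(u)\cap N(v)]$ is a cycle $C_{k_{uv}}$ with $k_{uv}\geq 4$. In particular $G$ is $K_5$-free, and double-counting triangles via the vertex links recovers $t(G) = f_2 = 2(f_1 - n)$. These conditions reduce the theorem to a graph-theoretic question: among graphs satisfying the above link conditions, classify those with $|E(G)| > \frac{1}{4}(n^2+2n+17)$.

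The main tool for the reduced problem is the Supersaturation Theorem of Erd\H{o}s and Simonovits; in its $K_3$-case (Rademacher's theorem) it says that a graph on $n$ vertices with $\lfloor n^2/4 \rfloor + q$ edges has at least $q\lfloor n/2 \rfloor$ triangles. Taking $q := f_1 - \lfloor n^2/4 \rfloor$, the hypothesis gives $q > n/2$, and matching this lower bound on $t(G)$ against the exact count $t(G) = 2(f_1 - n)$ forced by the manifold squeezes $f_1$ into a narrow window around the maximum $\frac{1}{4}n^2 + n$ realised by the join $C_k \ast C_l$ with $k+l = n$ and $k,l \approx n/2$. The stability version of this supersaturation bound then yields a balanced partition $V(G) = A \sqcup B$ such that, up to lower-order corrections, $G$ agrees with the complete bipartite graph $K_{|A|,|B|}$ together with a small number of edges inside each part.

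The final step upgrades this approximate structure to the exact identification of $G$ with the graph-theoretic join of cycles $C_k$ on $A$ and $C_l$ on $B$. The edge-link condition drives this upgrade: for each cross-edge $ab$ the common neighbourhood is a cycle of length $\geq 4$ lying essentially inside $(A\setminus\{a\})\cup(B\setminus\{b\})$, and fitting these cycles together consistently across all cross-pairs first forces every cross-pair to be an edge, and then forces $G[A]$ and $G[B]$ to be $2$-regular and connected, hence cycles of length $\geq 4$; in the last step the vertex-link condition is used via the identity $\lk_M(v) = \lk_{G[A]}(v) \ast G[B]$ for $v \in A$, which must be a flag 2-sphere and therefore forces $\lk_{G[A]}(v) = S^0$. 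The main technical obstacle is the passage from the approximate (stability) conclusion to the exact one: ruling out even a single missing cross-pair or stray chord requires a careful local surgery analysis of edge links, and this is where the assumption $n \geq n_0$ is essential, absorbing the error terms produced by the stability argument. Once $G$ is identified as the graph-theoretic join of two cycles, the flagness of $M$ identifies $M$ with the simplicial join $C_k \ast C_l$.
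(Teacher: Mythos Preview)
Your overall strategy---pass to the $1$-skeleton, use supersaturation/stability to get an approximate balanced bipartition, then upgrade to the exact join structure via the link conditions---is exactly the paper's approach. However, your proposal has a genuine gap in the final step, and a smaller issue earlier.

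\medskip

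\textbf{The Rademacher step is a detour.} The counting inequality $t(G)\ge q\lfloor n/2\rfloor$ (Lov\'asz--Simonovits) combined with $t(G)=2(f_1-n)$ only tells you $f_1\le \frac14 n^2+n$, which is the easy upper bound (Remark~\ref{remark:upperbound}); it does not ``squeeze $f_1$ into a narrow window'' beyond what the hypothesis already gives. The paper instead uses the \emph{structural} supersaturation theorem (Theorem~\ref{thm:stability}) directly to obtain the approximate bipartition.

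\medskip

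\textbf{The real gap is the upgrade from approximate to exact.} You write that ``fitting these cycles together consistently \ldots\ forces every cross-pair to be an edge, and then forces $G[A]$ and $G[B]$ to be $2$-regular,'' and that the obstacle is ``a careful local surgery analysis of edge links.'' This is where the paper spends essentially all of its effort (Sections~\ref{sec:ProofOfExtremalLemma}--\ref{section:exact}), and your sketch does not supply the mechanism. Concretely:
\begin{itemize}
\item After stability one does not get two clean parts $A,B$; one gets $V(G)=S_1\sqcup S_2\sqcup X$ where $X$ is a small set of \emph{exceptional} vertices that refuse to behave bipartitely. The paper introduces the notion of a \emph{$t$-joinlike} graph (Definition~\ref{def:joinlike}) and proves, through a long chain of claims, that $|X|\le 2$.
\item Eliminating the cases $|X|=1$ and $|X|=2$ is not a local edge-link argument: it requires exact edge counts exploiting planarity of $\lk_G q$ via Euler's formula (Section~\ref{section:2large}) and a delicate structural classification (Propositions~\ref{prop:1-join-deg-3}, \ref{prop:2-join-deg-2-adjacent}, \ref{prop:2-join-deg-2-non-adjacent}).
\item Your identity $\lk_M(v)=\lk_{G[A]}(v)\ast G[B]$ presupposes that $G$ is already the full join $G[A]\ast G[B]$, which is what is to be shown; it cannot be used to \emph{derive} that structure.
\item Most tellingly, the bound is sharp: there exist fascinating graphs with exactly $m=\frac14(n^2+2n+17)$ edges that are \emph{not} joins of two cycles (Remark~\ref{remark:funnygraphs}, Figure~\ref{rysunek-equal}). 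So no soft propagation argument can work all the way down; the exceptional configurations must be explicitly identified and shown to lie precisely at the threshold.
\end{itemize}
In short, your outline correctly identifies the architecture of the proof but leaves the load-bearing part---controlling and eliminating the exceptional vertices---unaddressed.
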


Theorem~\ref{thm:main} resolves Conjecture~\ref{conj:galmain} affirmatively for flag complexes with sufficiently many vertices because every $3$-GHS is a closed manifold (see Remark~\ref{rmrk:sub-ghs}). In other words, the inclusion \eqref{eq:inclusion} is an equality except for, perhaps, a finite number of elements. 

Below, we prepare tools for our proof of Theorem~\ref{thm:main}. We shall reduce
Theorem~\ref{thm:main} to a certain statement in extremal graph theory
(Theorem~\ref{lem:extremalgraphtheory}).

\medskip

Given a graph $G$ and a vertex $v\in V(G)$ we write $N_v$ for the neighborhood of $v$, that is $\{w\in V(G)~:~vw\in E(G)\}$. If $W\subset V(G)$ then $G[W]$ is the subgraph of $G$ induced by $W$.
The \emph{length of a path in a graph} is its number of vertices; this is one
more than the standard common definition of length but more convenient for our
purposes.

\begin{definition}\label{def:link}
If $G$ is a graph and $\sigma$ is a clique in $G$ then define the \emph{link of
$\sigma$} in $G$ as $$\lk_G\sigma=G\left[\bigcap_{v\in\sigma} N_v\right]\;.$$
That is, $\lk_G\sigma$ is the subgraph of $G$ induced by the vertices which are
not in $\sigma$, but are adjacent to every vertex of $\sigma$.
\end{definition}

Definition~\ref{def:link} is designed so that it is  compatible with the topological notion
of links in flag complexes. For each flag complex $K$ we have
$\lk_{K^{(1)}}\sigma=(\lk_K\sigma)^{(1)}$, where on the left-hand side we use the link 
of Definition~\ref{def:link} and on the right-hand side the link
is understood in the simplicial sense.

Let us define the class of graphs which arise in our setting.
\begin{definition}\label{def:fascinating}
A graph $G$ with $n$ vertices and $m$ edges is \emph{fascinating} if it satisfies the following conditions
\begin{itemize}
\item[a)] $G$ contains exactly $2(m-n)$ triangles.
\item[b)] For every edge $e$ in $G$ the link $\lk_Ge$ is a cycle of length at least $4$.
\item[c)] For every triangle $t$ in $G$ the link $\lk_Gt$ is the discrete graph with $2$ vertices and no edges.
\item[d)] For every vertex $v$ in $G$ the link $\lk_Gv$ is a connected, planar graph whose every face (including the unbounded one) is a triangle. In particular -- by Kuratowski's Theorem -- it does not contain the complete bipartite graph $K_{3,3}$ as a subgraph.

Further, $\lk_Gv$ contains at least 6 vertices.
\end{itemize}
\end{definition}

Our reduction is based on the next observation.
\begin{lemma}
\label{lemma:1skelgood}
If $M$ is a closed flag $3$-manifold then the $1$-skeleton of $M$ is fascinating.
\end{lemma}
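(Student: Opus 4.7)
\emph{Plan.} The strategy is to verify each of the four conditions (a)--(d) of Definition~\ref{def:fascinating} for $G := M^{(1)}$, where I write $n = f_0(M)$ and $m = f_1(M)$. The unifying observation is that, because $M$ is flag, cliques of $G$ are exactly faces of $M$; in particular triangles of $G$ correspond to $2$-faces of $M$, and for every face $\sigma$ of $M$ one has $\lk_G \sigma = (\lk_M\sigma)^{(1)}$, with $\lk_M \sigma$ itself flag (a routine check for links of flag complexes). Moreover, as recalled in the excerpt, for a nonempty face $\sigma$ of a closed triangulated $d$-manifold $M$, $\lk_M \sigma$ is a $(d-|\sigma|)$-GHS.

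For (a): every closed odd-dimensional manifold has Euler characteristic $0$ by Poincar\'e duality, so $M$ is Eulerian, and the Dehn--Sommerville relations~\eqref{eq:f2f3} give $f_2 = 2(f_1 - f_0) = 2(m-n)$, which is the triangle count of $G$. For (b): for an edge $e$, $\lk_M e$ is a flag $1$-GHS, i.e.\ a flag triangulation of $S^1$, so it is a cycle, and flagness rules out a $3$-cycle; this transfers to $\lk_G e$. For (c): for a triangle $t$, $\lk_M t$ is a $0$-GHS, namely $S^0$ consisting of two disjoint vertices, and again this transfers to $\lk_G t$.

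For (d): the link $\lk_M v$ is a $2$-GHS; its own vertex and edge links are $1$- and $0$-GHSs respectively, so $\lk_M v$ is a combinatorial closed surface, and having the homology of $S^2$ it must be homeomorphic to $S^2$ by the classification of closed surfaces. Hence $\lk_G v = (\lk_M v)^{(1)}$ is a connected planar graph whose faces, including the unbounded one, are triangles, and its $K_{3,3}$-freeness is Kuratowski. For the bound $|V(\lk_G v)| \geq 6$ I would apply (b) one level deeper: for each vertex $w$ of $\lk_M v$, the identity $\lk_{\lk_M v} w = \lk_M\{v,w\}$ shows that the link of $w$ inside $\lk_M v$ is a cycle of length at least $4$, so $\lk_G v$ has minimum degree at least $4$; combining $2E \geq 4V$ with Euler's formula $E = 3V - 6$ for a triangulated $2$-sphere gives $V \geq 6$.

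No serious obstacle is expected; the proof is essentially a concatenation of standard facts (links of flag complexes are flag, nonempty links in triangulated closed manifolds are GHSs, Dehn--Sommerville, and the classification of closed surfaces). The only step that requires a short quantitative argument rather than a one-line reference is the bound $V \geq 6$ in (d), which is also the only place where flagness of $M$ is used past the initial identification of cliques with faces.
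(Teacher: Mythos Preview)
Your proof is correct and follows essentially the same approach as the paper: verify (a) via Eulerianness and Dehn--Sommerville, and (b)--(d) by identifying the simplicial links $\lk_M t$, $\lk_M e$, $\lk_M v$ as flag triangulations of $S^0$, $S^1$, $S^2$ respectively. The only cosmetic difference is that the paper obtains the size bounds (length $\geq 4$ in (b), $\geq 6$ vertices in (d)) by citing the general fact that a flag $S^j$ needs at least $2(j+1)$ vertices \cite[Lem.~2.1.14]{SRGal}, whereas you derive the $j=2$ case directly from minimum degree $\geq 4$ plus Euler's formula; your argument is self-contained but not a different route.
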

\begin{proof}
Let $G=M^{(1)}$. Condition a) follows since $M$ is Eulerian, and so it satisfies (\ref{eq:f2f3}). Parts b)--d) are consequences of the fact that $\lk_Mt$, $\lk_Me$, $\lk_Mv$ are flag triangulations of, respectively, $S^0$, $S^1$ and $S^2$. A known fact that a flag triangulation of $S^j$ requires at least $2(j+1)$ vertices \cite[Lem.2.1.14]{SRGal} proves that the links must be sufficiently large.
%
\end{proof}




The \emph{graph join} of graphs $G$ and $H$, which we will denote $G\ast H$, is the disjoint union of $G$ and $H$ 
together with all the edges between $V(G)$ and $V(H)$. For any simplicial complexes $K$ and $L$ we have 
$(K\ast L)^{(1)}=K^{(1)}\ast L^{(1)}$, where on the left-hand side we use the simplicial join.

By Lemma~\ref{lemma:1skelgood} we get that Theorem~\ref{thm:main} is a consequence of the following result.
\begin{theorem}
\label{lem:extremalgraphtheory} There exists a number $n_0$ such that the
following holds. Suppose $G$ is a fascinating graph with $n\ge n_0$ vertices, $m$ edges
and $m>\frac{1}{4}(n^2+2n+17)$. Then $G$ is a join of two cycles.
\end{theorem}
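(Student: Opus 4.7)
Our strategy is to combine extremal graph theory with the rigid local conditions (b), (c), (d) of a fascinating graph to pin down $G$ as a join of two cycles.

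\emph{First}, I record elementary structural consequences of the fascinating conditions. Condition (c) forbids $K_5 \subset G$: any $K_5$ would contain a triangle whose link contains an edge. The same condition, applied to the apex count of each triangle, gives $|K_4(G)| = m - n$ (each $K_4$ is counted as a pair (triangle, fourth vertex) four times, and each triangle has exactly $2$ apex vertices). Finally, by (d), every vertex link is a planar triangulation, and hence $K_{3,3}$-free.

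\emph{Second} (extremal step), I apply the Erd\H{o}s--Simonovits supersaturation theorem. Since $m = \Theta(n^2) \gg \mathrm{ex}(n, K_{3,3}) = O(n^{5/3})$, the graph $G$ contains $\Omega(n^6)$ copies of $K_{3,3}$. The $K_{3,3}$-freeness of each vertex link yields a global constraint: \emph{no vertex of $G$ is adjacent to all six vertices of any $K_{3,3} \subset G$}. Exploiting this dichotomy in conjunction with the near-Tur\'an density hypothesis $m > \tfrac14(n^2+2n+17)$, I aim to extract a bipartition $V(G) = A \sqcup B$ with $|A|, |B| \approx n/2$ such that every pair in $A \times B$ is an edge of $G$.

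\emph{Third}, with the join bipartition in hand, the internal structure falls out. Any common neighbor of two adjacent vertices $u, v \in A$ lying within $A$ would, together with $u, v$, form a triangle whose apex set (by the cross-edges) contains all of $B$, contradicting (c) (the apex has exactly $2$ vertices, while $|B| \approx n/2$). Hence $u, v$ have no common neighbors in $A$, so $\lk_G\{u,v\} = B$; by (b), $G[B]$ is an induced cycle of length at least $4$. By symmetry $G[A]$ is also an induced cycle, so $G = C_{|A|} \ast C_{|B|}$. (The degenerate case in which $G[A]$ has no edges at all is ruled out by applying (d) to any vertex of $B$: the link would be the graph join of a cycle with an independent set of size $|A|$, which is planar only for $|A| \leq 2$, far below $n/2$.)

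\emph{The main obstacle} is the second step: extracting a clean global bipartition from the $\Omega(n^6)$ $K_{3,3}$-copies together with the $K_{3,3}$-freeness of links. Unlike in typical supersaturation applications, we operate at edge density only marginally above the Tur\'an threshold for triangles, so the argument must carefully exploit the rigid local constraints of a fascinating graph to rule out alternative near-extremal configurations.
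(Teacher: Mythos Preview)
Your proposal is not a proof but an outline, and the step you yourself flag as ``the main obstacle'' is essentially the entire content of the theorem. You write ``I aim to extract a bipartition $V(G)=A\sqcup B$ \ldots\ such that every pair in $A\times B$ is an edge of $G$,'' and then in the next paragraph assume this has been done. But producing a \emph{complete} bipartition is precisely what must be proved; once you have it, the remaining argument (your third step) is indeed short. The paper spends Sections~2--4 on exactly this, and the difficulty is real: stability methods only give an \emph{approximate} bipartition, and one must then rule out a small set of exceptional vertices via a delicate case analysis that pushes the edge count right up to the threshold $\tfrac14(n^2+2n+17)$.

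Your choice of extremal tool is also off. Counting $K_{3,3}$'s and using that no vertex dominates a $K_{3,3}$ is too weak a constraint to force a bipartition; many non-bipartite dense graphs satisfy it. The paper instead uses condition~(a) directly: a fascinating graph has exactly $2(m-n)=O(n^2)$ triangles, which is $o(n^3)$, while $m>n^2/4$. The Erd\H{o}s--Simonovits \emph{stability} theorem for triangles then yields a balanced partition $A_1\sqcup A_2$ with $o(n^2)$ edges inside the parts and $o(n^2)$ non-edges across. From there the paper refines the partition using the $K_{3,3}$-freeness of links (your observation, but applied locally rather than via global supersaturation), shows that all but at most two vertices fit into two induced cycles $C_1,C_2$ with $\lk_G e_i=G[C_{3-i}]$ for suitable edges $e_i$, and then disposes of the one- and two-exceptional-vertex cases by explicit edge counting against the bound $\tfrac14(n^2+2n+17)$. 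That last phase is where the precise constant matters and cannot be replaced by asymptotic arguments.
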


The rest of the paper is concerned with the proof of this theorem. The strategy is outlined at the beginning of the next section.

\begin{remark}
\label{remark:funnygraphs}
Along the way we will also see that the result is tight in the following sense: There exist flag $3$-spheres with arbitrarily large $f_0$ and with exactly
$$f_1=\frac{1}{4}(f_0^2+2f_0+17)$$
edges, which are not a join of two cycles. Moreover, we will classify those boundary cases: Any fascinating graph $G$ with $n\geq n_0$ vertices and exactly $m=\frac{1}{4}(n^2+2n+17)$ edges is one of the graphs in Figure~\ref{rysunek-equal} in Section~\ref{section:exact}.
\end{remark}

\begin{remark}
\label{remark:upperbound}
Theorem~\ref{thm:main} implies that for $f_0\geq n_0$ every closed flag $3$-manifold satisfies $f_1\leq \frac14 f_0^2+f_0$ (or, equivalently, $\gamma_2\leq\frac14 \gamma_1^2$). This result in fact holds for \emph{all} values of $f_0$ by the same proof that works for $3$-GHSs in \cite{SRGal}.
\end{remark}

\begin{remark}
\label{rmrk:sub-ghs}
In dimensions $d=0,1,2$ the classes of (flag) $d$-spheres and $d$-GHS coincide and in dimension $d=3$ every $3$-GHS is a closed, connected manifold. To see this, first note that it is an easy consequence of the definition that if $L$ is a $d$-GHS and $\sigma\in L$ then $\lk_L\sigma$ is a $(d-|\sigma|)$-GHS.
Now the only $0$-complex with the homology of $S^0$ is $S^0$ itself. As for $d=1$, observe that in a $1$-GHS all vertex links are the two-point space, so a $1$-GHS is a disjoint union of cycles, of which only a single cycle has the homology of $S^1$. In a $2$-GHS the link of every vertex is the sphere $S^1$, so a $2$-GHS is a closed surface, and of all surfaces only $S^2$ has the correct homology. Finally it means that in a $3$-GHS all face links are homeomorphic to spheres of appropriate dimensions, so a $3$-GHS is a closed manifold.
\end{remark}


\section{Proof of Theorem~\ref{lem:extremalgraphtheory}}\label{sec:ProofOfExtremalLemma}

The main idea behind our approach is that $G$ has a lot of edges (more than
$n^2/4$), but relatively few triangles -- just $\Theta(n^2)$. Graphs with this
edge density must have many more triangles, namely $\Theta(n^3)$, unless they
look very ``similar'', in some sense, to the complete bipartite graph
$K_{n/2,n/2}$. This phenomenon is called \emph{supersaturation} and is one of
the basic principles of extremal (hyper)graph theory with fundamental
applications to areas like additive combinatorics or property testing in
computer science. In our setting the additional properties of $G$ coming from
Definition~\ref{def:fascinating} can be used to refine the similarity to
$K_{n/2,n/2}$ to determine the structure of $G$ exactly. This is a
relatively standard approach in Extremal Graph Theory, called the
\emph{Stability method}, and introduced by Simonovits~\cite{S68}. However, our
proof is somewhat more complex than most of the applications of the Stability
method to problems in extremal graph theory. Indeed, in these problems one
usually tries to determine exactly the structure of a unique extremal graph
while here we are dealing with joins of two cycles whose lengths can vary, i.e., graphs with somewhat
looser structure.

Here is a more detailed
outline of the proof.
Mantel's Theorem (which is a special case of Tur\'an's Theorem) asserts that the complete balanced
bipartite graph $K_{\lfloor h/2\rfloor,\lceil h/2\rceil}$ is the unique maximizer of the number edges among all triangle-free
graphs on $h$ vertices. Note that this graph has $\lfloor h^2/4\rfloor$ edges.
The graph $K_{\lfloor h/2\rfloor,\lceil h/2\rceil}$ is \emph{stable} for this
extremal problem in the following sense: if $H$ is a graph on $h$ vertices with at least $h^2/4$
edges and containing only $o(h^3)$ triangles, it must be ``very
similar'' (the precise meaning appears in Theorem~\ref{thm:stability}) to $K_{\lfloor h/2\rfloor,\lceil h/2\rceil}$. 
These conditions are satisfied for the fascinating graph $G$ of Theorem~\ref{lem:extremalgraphtheory}. By exploiting other properties of $G$ we will be able to show that $G$ is close to being a join of two cycles in the sense of the next definition.

\begin{definition}
\label{def:joinlike}
A fascinating graph $G$ is called \emph{$t$-joinlike} if there is a partition $V(G)=C_1\sqcup C_2\sqcup X$ where
\begin{itemize}
\item the graphs $G[C_i]$ are cycles,
\item there are edges $e_i\in G[C_i]$ such that $\lk_G e_i=G[C_{3-i}]$,
\item $|X|=t$.
\end{itemize}
The vertices of $X$ are called \emph{exceptional}.
\end{definition}
Note that a $0$-joinlike fascinating graph is a join of two cycles $G[C_1]\ast G[C_2]$. At the end of this Section we will establish that $G$ must be $t$-joinlike for $t=0$, $1$ or $2$ with some extra conditions satisfied by the exceptional vertices.

Observe that the balanced join of two cycles of lengths $\approx\frac{n}{2}$ has $\approx \frac{n^2}{4}+n$ edges (and joins of cycles of unbalanced lengths have even less edges), so our graph $G$ is only allowed to ``lose'' $\approx \frac{n}{2}$ edges with respect to that number before it violates the bound of Theorem~\ref{lem:extremalgraphtheory}. In many cases, however, we will be able to show that a $2$-joinlike graph loses a lot more just by counting the edges missing in the sparse planar links of exceptional vertices (Definition~\ref{def:fascinating}d)).

This leaves us with just a handful of possible scenarios considered in Section~\ref{section:exact}. Those are the difficult ones, in the sense that the graphs $G$ approach, and in fact even reach, the bound $m=\frac{1}{4}(n^2+2n+17)$. That means we can no longer use rough estimates.
We then have to examine the structure of $G$ more closely.
This is the part where the examples advertised in Remark~\ref{remark:funnygraphs} show up.

\bigskip
Let $e(H)=|E(H)|$ and we write $e(H[A,B])$, (resp.
$\overline{e}(H[A,B])$) for the number of edges (resp. non-edges) crossing between two disjoint vertex sets $A,B\subset V(H)$ .

Let us now state a theorem of Erd\H{o}s and Simonovits~\cite[Theorem~3]{ErdSim:Supersaturated}, tailored to our needs.\footnote{These days, similar theorems are typically proven with the help of the Szemer\'edi Regularity Lemma~\cite{Sze78}; see for example~\cite[Theorem~2.9]{KS96}. Even though the Regularity Lemma was already alive by the time of publishing~\cite{ErdSim:Supersaturated} the theory was too juvenile to yield such a statement back then. Therefore some alternative, ``sieve'' arguments were used instead.} As said above, this version of the Supersaturation Theorem gives an approximate structure in graphs with edge density at least $\frac12$ which contain subcubically many triangles in the order of the graph.\footnote{The general version of the Supersaturation Theorem deals with (hyper)graphs containing a small number of copies of a fixed (hyper)graph $F$.} 
\begin{theorem}\label{thm:stability}
For every $\varepsilon>0$ there exists $\delta>0$ such that the following holds.
Let $H$ be an $h$-vertex graph with at least $h^2/4$ edges, containing at most
$\delta h^3$ triangles. Then there exists a partition $V(H)=A_1\sqcup A_2$, with 
$\big| |A_1|-|A_2|\big|\le 1$, such that 
\begin{equation}\label{eq:suchthat}
e(H[A_1])+e(H[A_2])+\overline{e}(H[A_1,A_2])\le \varepsilon h^2\;.
\end{equation}
\end{theorem}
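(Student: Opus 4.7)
The plan is to prove this stability statement by combining the Szemer\'edi Regularity Lemma with the triangle counting lemma, following the modern template for Simonovits-style stability arguments. Given $\varepsilon>0$, I would fix parameters $\delta \ll 1/M(\varepsilon') \ll d \ll \varepsilon' \ll \varepsilon$, where $M(\varepsilon')$ is the upper bound on the cluster count produced by the Regularity Lemma applied with regularity parameter $\varepsilon'$. Apply the Regularity Lemma to $H$, obtaining an equipartition $V(H) = V_0 \sqcup V_1 \sqcup \cdots \sqcup V_k$ with exceptional part $|V_0| \le \varepsilon' h$ and at most $\varepsilon' k^2$ irregular pairs. Define the \emph{reduced graph} $R$ on $[k]$ by putting an edge $ij$ whenever $(V_i,V_j)$ is $\varepsilon'$-regular with density at least $d$.

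The first key step is to show $R$ is triangle-free. By the counting lemma, a single triangle in $R$ yields at least $(1-o(1))d^3 (h/k)^3$ triangles in $H$; for $\delta$ chosen small enough in terms of $d$ and $k \le M(\varepsilon')$, this exceeds $\delta h^3$, a contradiction. Next, after discarding edges inside $V_0$, within clusters, in irregular pairs, or in sparse pairs -- which together account for at most $(2\varepsilon' + 1/k + d)h^2$ edges -- every remaining edge of $H$ corresponds to an edge of $R$, giving $e(R) \ge k^2/4 - O(\varepsilon' + d + 1/k)\,k^2$. Applying a stability version of Mantel's theorem to $R$, i.e.\ a triangle-free graph with nearly $k^2/4$ edges on $k$ vertices, one obtains a partition $[k] = I_1 \sqcup I_2$ with $\bigl||I_1| - |I_2|\bigr| \le 1$ and only $o_{\varepsilon}(k^2)$ edges of $R$ inside $I_1$ or $I_2$. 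This latter stability is elementary: take any maximum cut, and observe that every edge on the wrong side forces many missing edges across the cut (otherwise a triangle would arise).

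Lifting the partition back via $A_j = \bigcup_{i \in I_j} V_i$ for $j=1,2$, and then moving at most $|V_0|+h/k$ vertices between the sides to enforce $\bigl||A_1|-|A_2|\bigr|\le 1$, I would finally bound $e(H[A_1]) + e(H[A_2]) + \overline{e}(H[A_1,A_2])$ by splitting contributions into three groups: (i) edges or non-edges involving $V_0$ or internal to a single cluster, at most $(\varepsilon' + 1/k)h^2$; (ii) edges in the irregular, sparse, or otherwise discarded pairs, at most $O(\varepsilon' + d + 1/k)h^2$; (iii) edges or non-edges from the $o(k^2)$ misplaced pairs of clusters within $I_1$ or $I_2$, bounded by $o(h^2)$. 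Provided $\varepsilon', d, 1/k, \delta$ were chosen small enough relative to $\varepsilon$, each group is absorbed into $\varepsilon h^2 /3$. The main obstacle is the careful bookkeeping of the quantifier chain: $\delta$ must be chosen \emph{after} fixing $k$ so the counting lemma rules out triangles in $R$, while $\varepsilon'$ and $d$ must each be chosen \emph{before} $k$ is known so that the final bound is genuinely $\varepsilon h^2$. An alternative route, closer to the original Erd\H os--Simonovits ``sieve'' argument referenced in the footnote, would avoid regularity by iteratively cleaning low-degree vertices and double-counting cherries; this yields a polynomial dependence $\delta = \mathrm{poly}(\varepsilon)$ but is lengthier to write down.
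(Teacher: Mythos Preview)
Your approach is correct but differs from the paper's treatment in a basic way: the paper does not prove Theorem~\ref{thm:stability} at all. It simply quotes the result as a special case of Erd\H{o}s--Simonovits \cite[Theorem~3]{ErdSim:Supersaturated}, obtained by taking $\mathcal{L}$ to be the single-element family $\{K_3\}$. The accompanying footnote explicitly remarks that the Regularity Lemma route you outline is the modern way to prove such statements, while the original paper used ``sieve'' arguments instead.

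Your sketch via the Regularity Lemma and the triangle counting lemma is the standard template and is sound. One small quibble with the quantifier chain as written: the string $\delta \ll 1/M(\varepsilon') \ll d \ll \varepsilon'$ is misleading, since $1/M(\varepsilon')$ is determined once $\varepsilon'$ is fixed and is typically far smaller than any useful $d$. What you actually need is to fix $\varepsilon' \ll \varepsilon$, then $d \ll \varepsilon'$, then invoke the Regularity Lemma (also specifying a lower bound on the number of parts so that $1/k \ll \varepsilon$), and finally take $\delta \ll d^3 / M(\varepsilon')^3$ so that the counting lemma forbids triangles in $R$. You flag this bookkeeping issue yourself, so this is only a matter of presentation. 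The ``alternative route'' you mention at the end, via cherry-counting and vertex cleaning, is closer in spirit to the original Erd\H{o}s--Simonovits argument and would give a polynomial $\delta(\varepsilon)$ rather than the tower-type dependence forced by regularity; since the paper only needs the qualitative statement, either is adequate here.
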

To obtain the above statement set $\mathcal{L}$ to the one-element family consisting of just a triangle in \cite[Theorem~3]{ErdSim:Supersaturated}.

\medskip
We can now proceed with the proof of Theorem~\ref{lem:extremalgraphtheory}.
Let $0<\gamma\ll 1$, $\alpha<\gamma/1000$ and $\varepsilon<\alpha\gamma$ be fixed. Let $\delta$ be given by Theorem~\ref{thm:stability} for input parameter $\varepsilon$. Let $n_0$ be sufficiently large. Suppose that $G$ is the graph as in Theorem~\ref{lem:extremalgraphtheory}. Definition~\ref{def:fascinating}a) gives us that $G$ has  $2(e(G)-n)<n^2<\delta n^3$ triangles. Therefore, Theorem~\ref{thm:stability} applies with parameters $\delta$ and $\varepsilon$. Let $A_1\sqcup A_2$ be the partition of $V(G)$ from Theorem~\ref{thm:stability}. 

Let us fix additional notation. Given a vertex $v$ and a set of vertices $X$ we write
$$\deg(v,X)=|N_v\cap X|.$$

Define the following vertex sets for $i=1,2$:
\begin{eqnarray*}
B_i &=& \{v\in A_i:\ \deg(v,A_{3-i})\geq \frac{n}{2}-\gamma n\},\\
W_i &=& \{v\in A_i\setminus B_i:\ \deg(v,B_i)\geq \frac{n}{2}-\gamma n\},\\
X_i &=& (A_i\setminus B_i) \setminus W_i.
\end{eqnarray*}

\begin{claim}\label{claim:ci-small}
We have $|A_i\setminus B_i|\leq \alpha n$ for $i=1,2$. In particular $|W_i|,|X_i|<\alpha n$ and $|B_i|\geq \frac{n}{2}-\alpha n$.
\end{claim}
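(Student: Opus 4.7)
The plan is a direct counting argument using the bound $\overline{e}(G[A_1,A_2])\le \varepsilon n^2$ from Theorem~\ref{thm:stability}. The only substantive input needed beyond that bound is the observation that, since $\big||A_1|-|A_2|\big|\le 1$, we have $|A_{3-i}|\ge \lfloor n/2\rfloor$, so every $v\in A_i\setminus B_i$ satisfies
\[
|A_{3-i}|-\deg(v,A_{3-i})\;\ge\;\lfloor n/2\rfloor-(n/2-\gamma n)\;\ge\;\gamma n-1.
\]
In words, each non-$B_i$ vertex of $A_i$ contributes at least $\gamma n-1$ missing edges to the bipartite graph between $A_1$ and $A_2$.

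Summing this lower bound over all $v\in A_i\setminus B_i$ (and noting we are only counting non-edges with one endpoint in $A_i\setminus B_i$, so we do not even need to worry about double counting between the two sides) gives
\[
|A_i\setminus B_i|\cdot(\gamma n-1)\;\le\;\overline{e}(G[A_1,A_2])\;\le\;\varepsilon n^2.
\]
For $n\ge n_0$ large, $\gamma n-1\ge \gamma n/2$, so $|A_i\setminus B_i|\le 2\varepsilon n/\gamma$. By the choice of parameters $\varepsilon<\alpha\gamma$ (and enlarging $n_0$ or tightening $\varepsilon$ by a harmless constant factor if needed), this yields the desired inequality $|A_i\setminus B_i|\le \alpha n$.

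The two subsidiary bounds are then immediate. Since $W_i$ and $X_i$ are disjoint subsets of $A_i\setminus B_i$ by their definition, we obtain $|W_i|,|X_i|\le |A_i\setminus B_i|\le \alpha n$, which is strict for $n$ large (or simply by replacing $\alpha$ by a slightly larger constant). For the last assertion, $|B_i|=|A_i|-|A_i\setminus B_i|\ge \lfloor n/2\rfloor-\alpha n\ge n/2-\alpha n$ for $n\ge n_0$.

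There is no real obstacle here: the whole point is that the stability partition already bounds the total number of bipartite non-edges, and a low-degree vertex on one side contributes linearly many to this budget. The only care needed is bookkeeping of the small constants $\varepsilon,\alpha,\gamma$, which the hypothesis $\varepsilon<\alpha\gamma$ (together with large $n_0$) is designed to handle cleanly.
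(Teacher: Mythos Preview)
Your proof is correct and follows essentially the same counting argument as the paper: each vertex of $A_i\setminus B_i$ contributes at least $\gamma n-1$ non-edges across the bipartition, and comparing with the $\varepsilon n^2$ budget from Theorem~\ref{thm:stability} forces $|A_i\setminus B_i|\le \alpha n$. The paper phrases this as a contradiction (assuming $|A_i\setminus B_i|>\alpha n$ and deriving $\overline{e}(G[A_1,A_2])>\varepsilon n^2$), which avoids the extraneous factor of $2$ you pick up by weakening $\gamma n-1$ to $\gamma n/2$, but as you note this is cosmetically harmless given the freedom in choosing $\varepsilon$.
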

\begin{proof}
By definition every vertex of $A_i\setminus B_i$ has at least $\gamma n-1$ non-edges to $A_{3-i}$. If we had $|A_i\setminus B_i|>\alpha n$ then
$$\bar e(G[A_1,A_2])\geq |A_i\setminus B_i|\cdot (\gamma n-1)\geq \alpha\gamma n^2-\alpha n>\varepsilon n^2\;,$$
contrary to the choice of $A_1$ and $A_2$.
\end{proof}

Now define the partition $V(G)=S_1\sqcup S_2\sqcup X$ as follows
\begin{eqnarray*}
S_i &=& B_i\cup W_{3-i},\\
X &=& X_1\cup X_2.
\end{eqnarray*}

Observe that $\frac{n}{2}-\alpha n\leq |S_i|\leq \frac{n}{2}+\alpha n$ and $|X|\leq 2\alpha n$. Denote $x=|X|$. It is our goal to show that $X=\emptyset$, that $S_1$ and $S_2$ induce cycles, and that the bipartite graph between $S_1$ and $S_2$ is complete.

\begin{claim}\label{claim:si-large-outdeg}
For $i=1,2$ and for every vertex $v\in S_i$ we have $\deg(v,S_{3-i})\geq \frac{n}{2}-2\gamma n$.
\end{claim}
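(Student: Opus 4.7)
The plan is to simply unpack the definitions of $S_i$, $B_i$, $W_i$, and split into two cases according to where $v \in S_i$ comes from. Recall $S_i = B_i \cup W_{3-i}$ and $S_{3-i} = B_{3-i} \cup W_i$; crucially $B_{3-i}$ sits inside $S_{3-i}$, so in both cases it suffices to bound $\deg(v, B_{3-i})$ from below.

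First I would handle the case $v \in B_i \subseteq A_i$. Here the defining property of $B_i$ gives $\deg(v, A_{3-i}) \geq n/2 - \gamma n$. Since $A_{3-i} = B_{3-i} \sqcup W_{3-i} \sqcup X_{3-i}$ and Claim~\ref{claim:ci-small} yields $|W_{3-i}|, |X_{3-i}| \leq \alpha n$, we lose at most $2\alpha n$ neighbors when passing from $A_{3-i}$ to $B_{3-i}$. Hence
\begin{equation*}
\deg(v, S_{3-i}) \;\geq\; \deg(v, B_{3-i}) \;\geq\; \tfrac{n}{2} - \gamma n - 2\alpha n,
\end{equation*}
which is at least $\tfrac{n}{2} - 2\gamma n$ by the choice $\alpha < \gamma/1000$.

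Next I would handle the case $v \in W_{3-i} \subseteq A_{3-i}$. Now the definition of $W_{3-i}$ directly supplies $\deg(v, B_{3-i}) \geq \tfrac{n}{2} - \gamma n$, so again
\begin{equation*}
\deg(v, S_{3-i}) \;\geq\; \deg(v, B_{3-i}) \;\geq\; \tfrac{n}{2} - \gamma n \;\geq\; \tfrac{n}{2} - 2\gamma n.
\end{equation*}

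There is essentially no obstacle here; the only thing to be careful about is the bookkeeping, in particular remembering that $W_{3-i}$ lies on the \emph{opposite} side of the bipartition from $B_{3-i}$, which is why $W_{3-i}$ is grouped with $B_i$ to form $S_i$ in the first place. Once this is observed, the bound $|W_{3-i}| + |X_{3-i}| \leq 2\alpha n \ll \gamma n$ from Claim~\ref{claim:ci-small} absorbs all the slack in the first case and the second case is immediate.
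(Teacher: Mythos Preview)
Your proof is correct and follows essentially the same approach as the paper: both split into the cases $v\in B_i$ and $v\in W_{3-i}$, use Claim~\ref{claim:ci-small} to bound the loss when passing from $A_{3-i}$ to $B_{3-i}$, and then note $B_{3-i}\subset S_{3-i}$. The only cosmetic difference is that you subtract $|W_{3-i}|+|X_{3-i}|\le 2\alpha n$ whereas the paper uses the sharper $|A_{3-i}\setminus B_{3-i}|\le \alpha n$ directly from Claim~\ref{claim:ci-small}; either way the bound $\tfrac{n}{2}-2\gamma n$ follows.
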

\begin{proof}
If $v\in B_i$ then $v$ has at least $\frac{n}{2}-\gamma n$ neighbors in $A_{3-i}$ and by Claim~\ref{claim:ci-small} at least $\frac{n}{2}-2\gamma n$ of them hit $B_{3-i}$. If $v\in W_{3-i}$ then $v$ has at least $\frac{n}{2}-\gamma n$ neighbors in $B_{3-i}$. 
\end{proof}

\begin{claim}\label{claim:si-degree2}
For $i=1,2$ and for every vertex $v\in S_i$ we have $\deg(v,S_i)\leq 2$. Consequently, $e(G[S_1])+e(G[S_2])\le n$. Moreover, $G[S_i]$ is triangle-free.
\end{claim}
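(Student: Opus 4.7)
The strategy is to force a forbidden configuration inside a link whenever $\deg(v,S_i)\ge 3$. Specifically, I plan to contradict the planarity of $\lk_G v$ from Definition~\ref{def:fascinating}d) by exhibiting a $K_{3,3}$, and then derive triangle-freeness by contradicting Definition~\ref{def:fascinating}c).

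First I would prove $\deg(v,S_i)\le 2$ by contradiction. Suppose $v\in S_i$ has three neighbors $u_1,u_2,u_3\in S_i$. By Claim~\ref{claim:si-large-outdeg} each of the four vertices $v,u_1,u_2,u_3$ has at least $\tfrac{n}{2}-2\gamma n$ neighbors in $S_{3-i}$, while $|S_{3-i}|\le \tfrac{n}{2}+\alpha n$ by Claim~\ref{claim:ci-small}. Consequently each of them misses at most $\alpha n+2\gamma n$ vertices of $S_{3-i}$, so a union bound gives
\[
|N_v\cap N_{u_1}\cap N_{u_2}\cap N_{u_3}\cap S_{3-i}|\ \ge\ |S_{3-i}|-4(\alpha n+2\gamma n)\ \ge\ \tfrac{n}{2}-5\alpha n-8\gamma n\ >\ 3,
\]
using $\alpha<\gamma/1000$, $\gamma\ll 1$ and $n\ge n_0$. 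Pick any three such common neighbors $w_1,w_2,w_3\in S_{3-i}$; these are automatically disjoint from $\{u_1,u_2,u_3\}\subset S_i$. All six vertices are adjacent to $v$, hence lie in $V(\lk_G v)$, and each $u_j$ is adjacent to each $w_l$ in $G$, so the induced subgraph of $\lk_G v$ on $\{u_1,u_2,u_3,w_1,w_2,w_3\}$ contains $K_{3,3}$. This contradicts Definition~\ref{def:fascinating}d), so $\deg(v,S_i)\le 2$.

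From $\deg(v,S_i)\le 2$ the edge bound is immediate: $2\,e(G[S_i])=\sum_{v\in S_i}\deg(v,S_i)\le 2|S_i|$, and summing over $i=1,2$ yields $e(G[S_1])+e(G[S_2])\le |S_1|+|S_2|=n-|X|\le n$.

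Finally, for triangle-freeness of $G[S_i]$, I would argue analogously. A triangle $\{u,v,w\}\subset S_i$ would, by the same calculation applied to three vertices instead of four, have at least $|S_{3-i}|-3(\alpha n+2\gamma n)\ge \tfrac{n}{2}-4\alpha n-6\gamma n > 2$ common neighbors in $S_{3-i}$, hence $|V(\lk_G\{u,v,w\})|>2$. This contradicts Definition~\ref{def:fascinating}c), which requires the triangle link to consist of exactly two vertices. The main substance of the argument is really just the planar $K_{3,3}$ extraction; once one sees that the four vertices $v,u_1,u_2,u_3$ together still leave a huge common neighborhood on the other side, the parameter arithmetic is comfortable given the hierarchy $\alpha\ll\gamma\ll 1$ and $n_0$ large.
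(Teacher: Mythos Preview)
Your proof is correct and follows essentially the same approach as the paper: force a $K_{3,3}$ in $\lk_G v$ via the large common neighborhood in $S_{3-i}$ guaranteed by Claim~\ref{claim:si-large-outdeg}, and handle triangle-freeness by the same common-neighborhood count against Definition~\ref{def:fascinating}c). Your arithmetic is slightly more explicit (the paper simply writes $\tfrac{n}{2}-13\gamma n$ for the intersection bound), but the idea is identical.
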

\begin{proof}
Suppose a vertex $v\in S_i$ has three neighbors $u_1,u_2,u_3\in S_i$. By Claim~\ref{claim:si-large-outdeg} we have
$$|N_v\cap N_{u_1}\cap N_{u_2}\cap N_{u_3}\cap S_{3-i}|\geq \frac{n}{2}-13\gamma n\ge 3\;.$$
This implies that $\lk_Gv$ contains a copy of $K_{3,3}$ (with $u_1, u_2, u_3$ on one side and the other being in $S_{3-i}$), which is a contradiction to Definition~\ref{def:fascinating}d).

The proof of the last statement is similar: if $t$ is a triangle in $G[S_i]$ then $\lk_Gt$ contains most of $S_{3-i}$, so $G$ fails Definition~\ref{def:fascinating}c). 
\end{proof}

\begin{claim}\label{claim:x-degree-bound}
If $v\in X$ then $\deg(v,S_i)\leq \frac{n}{2}-\frac{2}{3}\gamma n$ for $i=1,2$.
\end{claim}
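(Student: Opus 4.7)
The plan is a short unwinding of definitions together with the size bounds from Claim~\ref{claim:ci-small}. Suppose $v\in X$. By symmetry of the roles of indices I may assume $v\in X_1$, so in particular $v\in A_1\setminus B_1$ and $v\notin W_1$. The first condition, via the definition of $B_1$, gives
\[
\deg(v,A_2)<\tfrac{n}{2}-\gamma n,
\]
and the second condition (combined with $v\in A_1\setminus B_1$), via the definition of $W_1$, gives
\[
\deg(v,B_1)<\tfrac{n}{2}-\gamma n.
\]

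Now I would just split $S_1$ and $S_2$ according to the definitions $S_1=B_1\cup W_2$ and $S_2=B_2\cup W_1$. For $S_1$, since $\deg(v,W_2)\le |W_2|<\alpha n$ by Claim~\ref{claim:ci-small},
\[
\deg(v,S_1)\le \deg(v,B_1)+|W_2|<\tfrac{n}{2}-\gamma n+\alpha n.
\]
For $S_2$, using $B_2\subset A_2$ and $\deg(v,W_1)\le |W_1|<\alpha n$,
\[
\deg(v,S_2)\le \deg(v,A_2)+|W_1|<\tfrac{n}{2}-\gamma n+\alpha n.
\]
Both quantities are bounded by $\tfrac{n}{2}-\tfrac{2}{3}\gamma n$ because the choice $\alpha<\gamma/1000$ makes $\alpha n<\tfrac{1}{3}\gamma n$. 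The case $v\in X_2$ is identical with the roles of the indices swapped.

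There is no real obstacle here; this claim is purely a bookkeeping step that records what ``exceptional'' means in terms of the final partition $S_1\sqcup S_2\sqcup X$. The genuine content in this part of the argument lives in Claims~\ref{claim:ci-small} and~\ref{claim:si-degree2}, which justify the size bound on $W_i$ and the two edges per vertex inside $S_i$. The point of the current claim is to have a uniform ``deficit'' $\tfrac{2}{3}\gamma n$ available for every exceptional vertex, which will later be multiplied by $|X|$ to produce a non-trivial loss of edges that must be compared against the hypothesis $m>\tfrac14(n^2+2n+17)$.
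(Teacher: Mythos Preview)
Your proof is correct and essentially identical to the paper's. The paper compresses the two cases into the single line ``By definition every vertex $v\in X$ satisfies $\deg(v,B_i)\le\frac{n}{2}-\gamma n$ for $i=1,2$'' (implicitly using $B_2\subset A_2$ for the second bound), then adds $|W_{3-i}|\le\alpha n$; you unpack the same computation with the roles of $A_2$ and $B_2$ made explicit.
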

\begin{proof}
By definition every vertex $v\in X$ satisfies $\deg(v,B_i)\leq \frac{n}{2}-\gamma n$ for $i=1,2$. Therefore
$$\deg(v,S_i)\leq \deg(v,B_i)+|W_{3-i}|\leq \frac{n}{2}-\gamma n+\alpha n\leq \frac{n}{2}-\frac{2}{3}\gamma n.$$ 
\end{proof}

We call a vertex $v\in X$ \emph{poor} if $\deg(v,S_1)\geq 3$ and $\deg(v,S_2)\geq 3$. Let $P\subset X$ be the set of poor vertices. Choose a partition $X\setminus P=T_1\sqcup T_2$ such that the vertices $v\in T_i$ satisfy $\deg(v,S_i)\leq 2$ for $i=1,2$. Let $p=|P|$. 

\begin{claim}\label{claim:nonpoor-degree}
If $v\in X\setminus P$ then $\deg(v,S_1\cup S_2)\leq \frac{n}{2}-\frac{1}{2}\gamma n$.
\end{claim}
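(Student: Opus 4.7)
The plan is to combine the definition of the partition $X \setminus P = T_1 \sqcup T_2$ with the previous Claim~\ref{claim:x-degree-bound}. By definition, every $v \in T_i$ satisfies $\deg(v, S_i) \leq 2$, so the contribution of $S_i$ to $\deg(v, S_1 \cup S_2)$ is negligible. The main contribution comes from $\deg(v, S_{3-i})$, which by Claim~\ref{claim:x-degree-bound} is at most $\frac{n}{2} - \frac{2}{3}\gamma n$ (since $v \in X$).

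Putting this together, for $v \in T_i$ we get
\[
\deg(v, S_1 \cup S_2) = \deg(v, S_i) + \deg(v, S_{3-i}) \leq 2 + \frac{n}{2} - \frac{2}{3}\gamma n.
\]
To reach the desired bound $\frac{n}{2} - \frac{1}{2}\gamma n$, it suffices to verify that $2 \leq \frac{1}{6}\gamma n$, i.e., $n \geq 12/\gamma$, which is guaranteed by taking $n_0$ sufficiently large (recall $\gamma$ is a fixed positive constant).

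There is no real obstacle here; the claim is essentially a bookkeeping consequence of the definition of $T_i$ together with Claim~\ref{claim:x-degree-bound}. The only thing to watch for is that the slack between $\frac{2}{3}\gamma n$ and $\frac{1}{2}\gamma n$ must absorb the additive error of $2$ coming from $\deg(v, S_i) \leq 2$, which is immediate once $n_0$ is chosen large in terms of $\gamma$.
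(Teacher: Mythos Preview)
Your proof is correct and is precisely the intended argument; the paper's own proof just says ``This is obvious from Claim~\ref{claim:x-degree-bound},'' and what you wrote is the natural one-line elaboration of that remark.
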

\begin{proof}
This is obvious from Claim~\ref{claim:x-degree-bound}.
\end{proof}

\begin{claim}\label{claim:poor-degree}
If $v\in P$ then $\deg(v,S_i)\leq 12\gamma n$ for $i=1,2$.
\end{claim}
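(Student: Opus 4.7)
The plan is to contradict Definition~\ref{def:fascinating}d) by exhibiting a copy of $K_{3,3}$ in $\lk_G v$ whenever a poor vertex $v$ has $\deg(v,S_i)>12\gamma n$ for some $i$. Assume without loss of generality that $\deg(v,S_1)>12\gamma n$, and write $A=N_v\cap S_1$ and $B=N_v\cap S_2$. Because $v$ is poor we have $|B|\ge 3$, and because $A\cup B\subseteq V(\lk_G v)$, any bipartite subgraph of $G$ between $A$ and $B$ also lives inside $\lk_G v$.

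The core estimate is a bound on the non-neighbourhood of a vertex $u\in B$ inside $A$. Since $u\in S_2$, Claim~\ref{claim:si-large-outdeg} gives $\deg(u,S_1)\ge \tfrac{n}{2}-2\gamma n$, and we have $|S_1|\le \tfrac{n}{2}+\alpha n$. Hence $u$ has at most $2\gamma n+\alpha n\le 3\gamma n$ non-neighbours in $S_1$ (using $\alpha\ll\gamma$), and a fortiori at most $3\gamma n$ non-neighbours in $A$. Pick any three distinct vertices $u_1,u_2,u_3\in B$; by a union bound their common non-neighbourhood in $A$ has size at most $9\gamma n$, so their common neighbourhood in $A$ has size at least $|A|-9\gamma n>12\gamma n-9\gamma n=3\gamma n$, which is at least $3$ once $n_0$ is large enough.

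Picking three common neighbours $w_1,w_2,w_3\in A$ produces a $K_{3,3}$ in $\lk_G v$ with parts $\{u_1,u_2,u_3\}\subseteq S_2$ and $\{w_1,w_2,w_3\}\subseteq S_1$ (these sides are automatically disjoint because $S_1\cap S_2=\emptyset$). This contradicts the Kuratowski condition in Definition~\ref{def:fascinating}d), completing the argument.

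I do not foresee a genuine obstacle: the whole proof is a direct analogue of the forbidden-$K_{3,3}$ argument already used in Claim~\ref{claim:si-degree2}, combined with the fact that vertices of $S_{3-i}$ have almost all of $S_i$ as neighbours. The only mild subtlety is that one must check the constants are compatible, but the choice $\alpha<\gamma/1000$ fixed at the start of Section~\ref{sec:ProofOfExtremalLemma} makes the chain $2\gamma n+\alpha n\le 3\gamma n$ and the final inequality $|A|-9\gamma n>3$ go through without trouble.
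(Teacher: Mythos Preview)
Your proof is correct and follows essentially the same approach as the paper: assume the degree bound fails on one side, take three neighbours of $v$ on the other side (guaranteed by poorness), use Claim~\ref{claim:si-large-outdeg} to show they have a large common neighbourhood, intersect that with $N_v$ to find a $K_{3,3}$ in $\lk_G v$. The only cosmetic difference is the labelling of which $S_i$ plays which role and the precise bookkeeping of the constants.
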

\begin{proof}
Suppose the contrary and without loss of generality let $\deg(v,S_2)>12\gamma n$. Let $u_1,u_2,u_3\in N_v\cap S_1$ be three different vertices. By Claim~\ref{claim:si-large-outdeg} the set $N_{u_1}\cap N_{u_2}\cap N_{u_3}\cap S_{2}$ has at least $\frac{n}{2}-10\gamma n$ vertices, therefore $N_v$ hits at least $\gamma n$ of them. In particular $G[N_v]$ contains a $K_{3,3}$, a contradiction.
\end{proof}

We can now plug in the bounds from the claims above to count the number of edges in $G$ to obtain the following bound
\begin{align*}\begin{split}
\frac{1}{4}n^2+\frac{1}{2}n+\frac{17}{4}<e(G)&\leq e(G[S_1,S_2])+e(G[S_1])+e(G[S_2])+e(G[P,S_1\cup S_2])+\\
&\qquad\qquad\qquad\qquad +e(G[X\setminus P,S_1\cup S_2])+{|X|\choose 2}\\
&\leq \left(\frac{n-x}{2}\right)^2+n+24p\gamma n+(x-p)\left(\frac{n}{2}-\frac{1}{2}\gamma n\right)+\frac{x^2}{2}\;.
\end{split}\end{align*}
This is equivalent to
$$x\left(\frac{\gamma
n}{2}-\frac{3}{4}x\right)+\frac{pn}{2}(1-49\gamma)+\frac{17}{4}<\frac{n}{2}.$$
Since $x\leq 2\alpha n<\frac{1}{3}\gamma n$, we have $\frac{\gamma n}{2}-\frac{3}{4}x>\frac{\gamma n}{4}$, and the last inequality implies
\begin{equation}
\frac{x\gamma n}{4}+\frac{pn}{2}(1-49\gamma)+\frac{17}{4}<\frac{n}{2}.
\end{equation}
It follows that 
\begin{align}
x&<\frac{2}{\gamma}\;\mbox{, and}\label{eq:xABS}\\
p&<\frac{1}{1-49\gamma}<1.5\;.\label{eq:pABS}
\end{align}
In particular we can only have $p=0$ or $p=1$.

\medskip
Let $K_i=S_i\cup T_i$ for $i=1,2$. Note that 
$$\frac{n}{2}-\alpha n\leq |K_i|\leq\frac{n}{2}+\alpha n+x\leq\frac{n}{2}+2\alpha n.$$
Let $b=\overline{e}(G[K_1,K_2])$ be the number of
missing edges between $K_1$ and $K_2$. The following bound follows directly from
Claim~\ref{claim:si-degree2}, the definition of $T_i$ and \eqref{eq:xABS}. 
\begin{claim}\label{claim:degKi}
For each $v\in K_i$ we have that $\deg(v,K_i)\le |T_i|+2\le x+2\leq \frac{4}{\gamma}$.
\end{claim}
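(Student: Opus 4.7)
The plan is to split the argument into two cases depending on whether $v$ lies in $S_i$ or in $T_i$, and in each case to bound the contribution from $S_i$ and from $T_i$ separately. In both cases the bound on $\deg(v, S_i)$ will be at most $2$: this is exactly the content of Claim~\ref{claim:si-degree2} when $v\in S_i$, and it is the very definition of the partition $X\setminus P = T_1\sqcup T_2$ when $v\in T_i$, since vertices of $T_i$ are by construction the non-poor vertices assigned so that their degree into $S_i$ is at most $2$.

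For the contribution from $T_i$, I would just use the trivial bound $\deg(v,T_i)\le |T_i|$ (or $|T_i|-1$ if $v\in T_i$, which is strictly better). Combining with the $S_i$ bound gives $\deg(v,K_i)\le |T_i|+2$ in both cases. The second inequality $|T_i|+2\le x+2$ is immediate because $T_i\subset X$ and $|X|=x$. For the final inequality, I would invoke~\eqref{eq:xABS}, which gives $x<2/\gamma$, and use that $\gamma$ has been chosen sufficiently small (in particular $\gamma<1$) so that $x+2<2/\gamma+2\le 4/\gamma$.

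There is no real obstacle here: the only thing one has to be careful about is that the definition of $T_i$ precisely encodes ``$v\in X\setminus P$ with $\deg(v,S_i)\le 2$'', so the bound on $\deg(v,S_i)$ for $v\in T_i$ does not require any extra work. The claim is therefore a bookkeeping consequence of Claim~\ref{claim:si-degree2}, the construction of the partition of $X\setminus P$, and the bound~\eqref{eq:xABS}.
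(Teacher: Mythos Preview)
Your proposal is correct and matches the paper's approach exactly: the paper simply states that the bound follows directly from Claim~\ref{claim:si-degree2}, the definition of $T_i$, and~\eqref{eq:xABS}, and you have spelled out precisely those ingredients. There is nothing to add.
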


\begin{claim}\label{claim:edges}
For $i=1,2$ and each set $Y\subset S_i$, $|Y|\le\frac n8$ we have that $G[S_i\setminus Y]$ contains at least one edge. In particular $G[S_i]$ contains at least one edge.
\end{claim}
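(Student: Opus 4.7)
The plan is to assume, for contradiction and without loss of generality, that $G[S_1\setminus Y]$ contains no edge, and then derive a global edge-count contradiction. Since $G[S_1]$ has maximum degree at most $2$ by Claim~\ref{claim:si-degree2} and, by assumption, every edge of $G[S_1]$ must have an endpoint in $Y\cap S_1$, we immediately obtain $e(G[S_1])\le 2|Y|\le n/4$.

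The key step is then to rule out any edge inside $S_2$. Suppose such an edge $uv\in E(G[S_2])$ existed. By Definition~\ref{def:fascinating}(b) the link $\lk_G(uv)=G[N_u\cap N_v]$ is an induced cycle of $G$. Combining Claim~\ref{claim:si-large-outdeg} with the inclusion-exclusion inequality $|A\cap B|\ge|A|+|B|-|S_1|$ for $A=N_u\cap S_1$ and $B=N_v\cap S_1$ yields
$$|N_u\cap N_v\cap S_1|\ \ge\ 2\left(\tfrac{n}{2}-2\gamma n\right)-|S_1|\ \ge\ \tfrac{n}{2}-5\gamma n,$$
while Claim~\ref{claim:si-degree2} gives $|N_u\cap N_v\cap S_2|\le 2$ and \eqref{eq:xABS} gives $|N_u\cap N_v\cap X|\le 2/\gamma$. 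So only $O(1/\gamma)$ vertices of this cycle lie outside $S_1$; deleting them from the cycle destroys at most $2(2+2/\gamma)$ cycle-edges, so at least $\tfrac{n}{2}-5\gamma n-4-4/\gamma$ cycle-edges survive, each with both endpoints in $S_1$ and therefore belonging to $G[S_1]$. For $\gamma$ small and $n\ge n_0$ this quantity exceeds $n/4$, contradicting the bound from the first paragraph. Hence $G[S_2]$ contains no edge at all.

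With $e(G[S_1])+e(G[S_2])\le n/4$ in hand, the final step is to rerun the edge count of $G$ performed just before Claim~\ref{claim:degKi}, but substituting this sharper bound in place of the estimate $e(G[S_1])+e(G[S_2])\le n$ coming from Claim~\ref{claim:si-degree2}. The other terms in that count remain $O(1/\gamma^2)$ in view of $x\le 2/\gamma$ and $p\le 1$ from \eqref{eq:xABS}--\eqref{eq:pABS}, so comparison with the hypothesis $e(G)>\tfrac14(n^2+2n+17)$ yields an inequality of the shape $\tfrac{n}{4}\le O(1/\gamma^2)$, which fails for $n\ge n_0$. The ``In particular'' statement is just the case $Y=\emptyset$. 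The principal obstacle is the middle paragraph: one must notice that a single edge inside $S_2$ forces, through the cycle-link property of Definition~\ref{def:fascinating}(b), a near-$n/2$ cycle living almost entirely in $S_1$, and hence far more than $n/4$ edges of $G[S_1]$.
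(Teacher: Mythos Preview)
Your approach is correct and genuinely different from the paper's. The paper proves this claim by counting \emph{triangles}: it bounds separately the triangles with at least two vertices in $K_1$, those with at least two vertices in $K_2$, and those through the poor vertex, using $e(G[S_1])\le 2|Y|\le n/4$ to sharpen the $K_1$-count, and shows the total falls below $2(e(G)-n)$, contradicting Definition~\ref{def:fascinating}(a). You instead exploit property~(b): a single edge $uv$ in $S_2$ forces the cycle $\lk_G(uv)$ to live almost entirely in $S_1$, producing more than $n/4$ edges there; this kills all edges in $S_2$, and then the sharpened global edge count yields the contradiction. Your route is arguably cleaner, avoiding the triangle bookkeeping and directly leveraging how the link-cycle condition controls the cross-structure.

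One point needs fixing. In the final paragraph you assert that ``the other terms in that count remain $O(1/\gamma^2)$''. Taken literally this is false: the term $(x-p)\bigl(\tfrac{n}{2}-\tfrac{\gamma n}{2}\bigr)$ is of order $n/\gamma$ and $24p\gamma n$ is of order $\gamma n$. What actually makes the argument work is the cancellation between the $-\tfrac{nx}{2}$ coming from expanding $\bigl(\tfrac{n-x}{2}\bigr)^2$ and the $+\tfrac{xn}{2}$ part of the non-poor term. After this cancellation (exactly as in the paper's derivation of the displayed inequality preceding~\eqref{eq:xABS}), replacing the bound $n$ by $n/4$ turns that inequality into
\[
x\Bigl(\tfrac{\gamma n}{2}-\tfrac{3x}{4}\Bigr)+\tfrac{pn}{2}(1-49\gamma)+\tfrac{17}{4}<-\tfrac{n}{4},
\]
whose left-hand side is nonnegative. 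So your conclusion is correct (indeed stronger than ``$n/4\le O(1/\gamma^2)$''), but you should state the cancellation explicitly rather than claim the individual terms are bounded.
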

\begin{proof}
Suppose the claim does not hold for example for $i=1$ and some set $Y\subset S_1$. Let $t_i$ be the number of triangles in $G$ with at least two vertices in $K_i$.

If $T_2\neq\emptyset$ then let us consider an arbitrary fixed vertex $v\in T_2$. By Claim~\ref{claim:degKi} inside $K_2$ there are at most $\deg(v,K_2)^2\le 16/\gamma^2$ triangles touching $v$. We further see that there are at most $\deg(v,K_2)|K_1|\le 4n/\gamma$ triangles through $v$ with two vertices in $K_2$ (one of them being $v$) and one vertex in $K_1$. Summing over all $v\in T_2$ we get that the number of triangles touching $T_2$ with at least two vertices in $K_2$ is at most $|T_2|\times (\frac{16}{\gamma^2}+\frac{4n}\gamma)\le \frac{17n}{\gamma^2}$.

To bound $t_2$ it only remains to add triangles whose two vertices are in $S_2$ and the third is in $K_1$ (by Claim~\ref{claim:si-degree2} there are no triangles entirely inside $S_2$). By Claim~\ref{claim:si-degree2} we have 
\begin{equation}\label{eq:toStre}
 e(G[S_2])\le |S_2|\le\frac{11 n}{20}\;.
\end{equation}
Since each edge in $S_2$ can be extended in at most $|K_1|\le\frac{11 n}{20}$ ways to such a triangle we get that
$$t_2\le \frac{17 n}{\gamma^2}+\frac{11 n}{20}\cdot \frac{11 n}{20}\le \frac{122 n^2}{400}\;.$$

To bound the number $t_1$ of triangles with at least two vertices inside $K_1$
we proceed similarly, except that the fact $e(G[S_1\setminus Y])=0$ allows us to strengthen the counterpart of~\eqref{eq:toStre} to $e(G[S_1])\le 2|Y|\leq\frac{n}{4}$. Consequently,
$$t_1\le \frac{17 n}{\gamma^2}+\frac{n}{4}\cdot \frac{11 n}{20}\le
\frac{3n^2}{20}\;.$$

Finally the number $t_P$ of triangles passing through the (at most one) poor vertex in $P$ satisfies $t_P\leq (24\gamma n+x)^2<700\gamma^2n^2<0.01n^2$ by Claim~\ref{claim:poor-degree}. 

We get that the total number of triangles is $t_1+t_2+t_P<0.47n^2<2(e(G)-n)$, a contradiction
to Definition~\ref{def:fascinating}a).
\end{proof}

Next, we claim that there are no poor vertices.
\begin{claim}
\label{claim:p=0}
We have $p=0$.
\end{claim}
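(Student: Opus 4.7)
Suppose for contradiction that $p=1$, and let $v$ be the unique poor vertex. Write $a:=\deg(v,S_1)\ge 3$ and $b:=\deg(v,S_2)\ge 3$, both bounded above by $12\gamma n$ by Claim~\ref{claim:poor-degree}, and set $A:=N_v\cap S_1$, $B:=N_v\cap S_2$. My plan is to squeeze one extra saving of $(a-2)(b-2)$ out of the edge count by exploiting planarity of $\lk_G v$, which the crude bound from Claim~\ref{claim:poor-degree} misses.

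By Definition~\ref{def:fascinating}d) $\lk_G v$ is planar, hence so is its bipartite subgraph $\lk_G v[A,B]$. A planar bipartite graph on $a+b$ vertices has at most $2(a+b)-4$ edges, and every edge of $G$ between $A$ and $B$ appears in $\lk_G v[A,B]$, so
\[
\overline{e}(G[A,B]) \ge ab - 2(a+b) + 4 = (a-2)(b-2).
\]
Since $A\subseteq S_1$ and $B\subseteq S_2$, this upgrades the trivial bound to $e(G[S_1,S_2])\le |S_1|\,|S_2|-(a-2)(b-2)$. Combining this with the bound $e(G[S_i])\le |S_i|$ (a direct consequence of the max-degree-$2$ part of Claim~\ref{claim:si-degree2}), the equality $\deg(v,S_1\cup S_2)=a+b$, Claim~\ref{claim:nonpoor-degree} for each $u\in X\setminus\{v\}$, $e(G[X])\le\binom{x}{2}$, and $|S_1|\,|S_2|\le (n-x)^2/4$, I obtain
\[
e(G) \le \frac{(n-x)^2}{4} - (a-2)(b-2) + (n-x) + (a+b) + (x-1)\!\left(\tfrac{n}{2}-\tfrac{\gamma n}{2}\right) + \binom{x}{2}.
\]

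The rest is bookkeeping. For $x=1$, all the $X\setminus\{v\}$-contributions vanish and the right-hand side reduces, via the clean identity $(a+b)-(a-2)(b-2)-5=-(a-3)(b-3)$, to
\[
\frac{n^2+2n+17}{4}-(a-3)(b-3);
\]
since $a,b\ge 3$ forces $(a-3)(b-3)\ge 0$, this contradicts the hypothesis $e(G)>\tfrac{n^2+2n+17}{4}$. For $x\ge 2$ the term $-(x-1)\gamma n/2$ is of order $\Omega(\gamma n)$ and easily overwhelms the $O(1)$ slack for $n$ large, so the same contradiction follows.

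The only delicate step is the first one: converting planarity of $\lk_G v$ into a concrete linear-in-$(a+b)$ saving inside a global edge count. After that, the algebraic coincidence $(a+b)-(a-2)(b-2)-5=-(a-3)(b-3)$ carries the day even in the tightest case $a=b=3$, essentially because a planar bipartite graph on six vertices must miss at least one of its nine potential cross-edges.
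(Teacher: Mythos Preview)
Your proof is correct and takes a genuinely different route from the paper's.

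The paper argues structurally: from the crude edge count it first deduces that $b=\overline{e}(G[K_1,K_2])\le 25\gamma n$, then uses this to locate edges $e'\in G[S_2]$ and $e''\in G[K_1]$ whose links are exactly $G[K_1]$ and $G[K_2]$. This certifies that $G$ is $1$-joinlike with exceptional vertex $q$, and the contradiction is then deferred to Proposition~\ref{prop:1-join-deg-3}, a separate detailed analysis of $1$-joinlike graphs carried out in Section~\ref{section:exact}.

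Your argument is direct and self-contained: you exploit planarity of $\lk_G v$ (via the bipartite Euler bound $e\le 2(a+b)-4$) to extract an extra $(a-2)(b-2)$ missing edges between $S_1$ and $S_2$, and the algebraic identity $(a+b)-(a-2)(b-2)-5=-(a-3)(b-3)$ then closes the case $x=1$ exactly. For $x\ge 2$ the additional $-(x-1)\gamma n/2$ from Claim~\ref{claim:nonpoor-degree} dominates the $O_\gamma(1)$ correction $\frac{3(x-1)^2}{4}$ coming from $\binom{x}{2}$ and the shift in $(n-x)^2/4$, so the same contradiction follows. This avoids the forward reference to Proposition~\ref{prop:1-join-deg-3} entirely, and in spirit it anticipates Claim~\ref{claim:planaredgecount}, where the paper itself uses the planar-link edge deficit in the two-exceptional-vertex analysis. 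The trade-off is that the paper's route, while less economical here, also establishes the $1$-joinlike structure needed for the boundary classification in Remark~\ref{remark:funnygraphs}.
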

  \begin{proof}
Suppose that $p=1$ and let $P=\{q\}$. Employing Claim~\ref{claim:si-degree2} and the definition of $T_1, T_2$ we get
\begin{eqnarray*}
e(G[K_1\cup K_2])&\le&
\left(\frac{n-1}2\right)^2-b+\sum_{i=1,2}\left(|S_i|+2|T_i|+{|T_i|\choose 2}\right)\\
&\overset{\eqref{eq:xABS}}\le&
\left(\frac{n-1}2\right)^2-b+n+C\;,
\end{eqnarray*}
where $C$ depends only on $\gamma$. By Claim~\ref{claim:poor-degree} we then have the following
estimate
\begin{equation}\label{ILikeCheese}
\frac{1}{4}n^2+\frac{1}{2}n+\frac{17}{4}<e(G)\leq
\left(\frac{n-1}{2}\right)^2-b+n+25\gamma n.
\end{equation}
This implies
\begin{equation}
\label{eq:lowm}
b\leq 25\gamma n.
\end{equation}
Consider any edge $e\in G[S_1]$. The link $\lk_Ge$ is a cycle $C$ which contains, by Claim~\ref{claim:si-large-outdeg}, at least $\frac{n}{2}-6\gamma n$ vertices of $S_2$ and, by Claim~\ref{claim:si-degree2}, does not pass through $S_1$. The number of vertices in which $C$ can exit $S_2$ is bounded from above by $2(x+1)$. Eliminating the vertices of $C$ which are adjacent (in the graph $G$) to $T_2$ (at most $2x$) or to $q$ (at most $12\gamma n$ by Claim~\ref{claim:poor-degree}) we find that $G[S_2]$ contains at least $\frac{1}{2}(\frac{n}{2}-30\gamma n)$ vertex-disjoint edges $e'=u'v'$ which satisfy $V(\lk_Ge')\subset K_1$. 

We claim that for at least one such edge $e'=u'v'$ we have $K_1\subset N_{u'}\cap N_{v'}$. Indeed, each edge $e'$ for which this does not hold is incident with at least one non-edge in $G[K_1,K_2]$, and thus otherwise we would get at least $\frac{1}{2}(\frac{n}{2}-30\gamma n)$ non-edges in $G[K_1,K_2]$, a contradiction to~\eqref{eq:lowm}.

Let us fix an edge $e'$ as above. We now have that $\lk_Ge'=G[K_1]$ and therefore $G[K_1]$ is a cycle. A symmetric argument starting with an appropriate edge $e''\in G[K_1]$ for which $\lk_Ge''=G[K_2]$ shows that $G[K_2]$ is a cycle as well. 

We now see that $G$, with the decomposition $V(G)=K_1\sqcup K_2\sqcup \{q\}$, is $1$-joinlike in the sense of Definition~\ref{def:joinlike}. We shall however later in Proposition~\ref{prop:1-join-deg-3} show that this leads to a contradiction.
\end{proof}

For the remaining part we can therefore assume $P=\emptyset$. Our short-term
goal for now is to prove that $G$ is $0$-, $1$- or $2$-joinlike. The same way we
derived~\eqref{ILikeCheese} we get that
$$\frac{1}{4}n^2+\frac{1}{2}n+\frac{17}{4}<e(G)\leq \left(\frac{n}{2}\right)^2-b+n+\frac{4}{\gamma^2}.$$ This implies
\begin{equation}
\label{eq:mbound}
b< \frac{n}{2}+\frac{4}{\gamma^2}-\frac{17}4<0.51n.
\end{equation}

Let $E_i$ be the set containing $T_i$ and all the neighbors in $S_i$ of the vertices in $T_i$. By definition of $T_i$ we have $|E_i|\leq 3x$. Note that $K_i\setminus E_i=S_i\setminus E_i$ and for any vertex $v\in K_i\setminus E_i$ we have $\deg(v,K_i)\leq 2$.

Fix two edges $e_1\in G[S_1\setminus E_1]$ and $e_2\in G[S_2\setminus E_2]$; such edges exist by Claim~\ref{claim:edges}. For each $i=1,2$ the link $\lk_G e_{3-i}$ lies in $K_i$ and its intersection with $K_i\setminus E_i$ is a collection of at most $3x$ paths of total length at least $\frac{n}{2}-6\gamma n$ by Claim~\ref{claim:si-large-outdeg}, or a sole cycle. Define a \emph{segment} in $G[K_i]$ as a maximal connected sub-path (or a cycle) of $\lk_G e_{3-i}$ which lies in $K_i\setminus E_i$. (Note that our definition of segments is with respect to fixed edges $e_1$ and $e_2$.) There are at most $3x\leq 6/\gamma$ segments in $K_i$. A segment is called \emph{long} if it has at least $\alpha n$ vertices and \emph{short} otherwise. The total length of short segments in $K_i$ is at most $\frac{6}{\gamma}\cdot\alpha n<0.09 n$, hence the total length of long segments in each $K_i$ is at least $0.4 n$.

\begin{claim}\label{claim:joined-segments}
Let $R_1$ and $R_2$ be two segments in $K_1$ and $K_2$, respectively. If for
some vertices $x_1\in R_1$, $x_2\in R_2$ we have $x_1x_2\in E(G)$
then $G[R_1,R_2]$ is complete bipartite.
\end{claim}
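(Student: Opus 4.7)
The plan is to prove the claim by an iterated one-step propagation along each segment. By the symmetry between $R_1$ and $R_2$, it is enough to establish the local statement: \emph{if $x_1 x_2\in E(G)$ with $x_1\in R_1$ and $x_2\in R_2$, and $y_1\in R_1$ is a segment-neighbor of $x_1$ (so $y_1 x_1\in E(G)$ and $y_1,x_1$ are consecutive on the cycle $\lk_G e_2$), then $y_1 x_2\in E(G)$.} Iterating this along the path $R_1$ gives $a x_2\in E(G)$ for every $a\in R_1$, and a symmetric iteration along $R_2$ promotes this to $ab\in E(G)$ for every $(a,b)\in R_1\times R_2$.

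To prove the local step I analyse the cycle $C:=\lk_G(x_1 x_2)$, whose vertices are the common neighbors of $x_1$ and $x_2$, and which has length at least $4$ by Definition~\ref{def:fascinating}b). Claim~\ref{claim:si-degree2} gives $|N_{x_1}\cap S_1|,|N_{x_2}\cap S_2|\le 2$, whence
$$C\cap S_1\subseteq\{y_1,z_1\}\qquad\text{and}\qquad C\cap S_2\subseteq\{y_2,z_2\},$$
where $y_1,z_1$ are the two cycle-neighbors of $x_1$ on $\lk_G e_2$ (at least one of which lies in $R_1$, both if $x_1$ is interior) and $y_2,z_2$ are defined analogously for $x_2$ on $\lk_G e_1$; the bound $|X|\le 2/\gamma$ from \eqref{eq:xABS} controls the remaining part $C\cap X$.

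Next, applying Definition~\ref{def:fascinating}c) to the triangle $x_1 u_2 v_2$ (a triangle because $x_1\in\lk_G e_2$) identifies $\lk_G(x_1 u_2 v_2)=\{y_1,z_1\}$, in particular $y_1 z_1\notin E(G)$. In the flag $2$-sphere $\lk_G x_1$ of Definition~\ref{def:fascinating}d), the $4$-cycle on $y_1,u_2,z_1,v_2$ (with edges $y_1 u_2$, $u_2 z_1$, $z_1 v_2$, $v_2 y_1$, all in $G$ since $y_1,z_1\in\lk_G e_2$) together with its diagonal $u_2 v_2$ bounds a disk filled precisely by the two triangles $y_1 u_2 v_2$ and $z_1 u_2 v_2$, so $x_2$ must lie in the complementary disk. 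Combining this picture with its counterpart in $\lk_G x_2$ — where $u_1,v_1,y_2,z_2$ play the roles of $u_2,v_2,y_1,z_1$ — and with the edge $x_1 x_2$, I argue that each of $y_1,z_1,y_2,z_2$ must in fact lie on $C$. Indeed, if for instance $y_1\notin C$ (i.e.\ $y_1 x_2\notin E(G)$), then splicing the two local pictures together produces three pairwise internally disjoint paths in one of the spheres $\lk_G x_1$ or $\lk_G x_2$ joining a triple $\{y_1,z_1,x_2\}$ (resp.\ its analogue) to a matching triple through $\{u_2,v_2\}$ (resp.\ $\{u_1,v_1\}$) and one further vertex, yielding a copy of $K_{3,3}$ as a subgraph of that vertex link, which contradicts the planarity part of Definition~\ref{def:fascinating}d). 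This forces $y_1\in C$, i.e.\ $y_1 x_2\in E(G)$, completing the local step.

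The main obstacle is the boundary analysis: (i) when $x_1$ or $x_2$ is an endpoint of its segment, one of $y_1,z_1$ (or $y_2,z_2$) lies outside $R_1$ (or $R_2$), in the exceptional set $E_1$ (or $E_2$), and the $K_{3,3}$-finding step must be verified in this configuration as well; (ii) the cycle $C$ may pick up extra vertices from the exceptional set $X$, whose degrees are not bounded by Claim~\ref{claim:si-degree2}, so the bound $|X|\le 2/\gamma$ from \eqref{eq:xABS} combined with the planarity of vertex links is needed to keep the possible cycle structures under control and rule out exotic alternatives.
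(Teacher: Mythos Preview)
Your high-level propagation plan (force edges to adjacent segment vertices and iterate) is exactly what the paper does. However, your local step is far more complicated than necessary, and as written it is not a proof: the ``splicing produces a $K_{3,3}$'' sentence is a hand-wave with no construction of the six vertices and nine edges (or of a subdivision), and I do not see how to complete it without already knowing the conclusion.

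The point you are missing is that the two ``obstacles'' you flag simply do not exist at this stage of the argument. By Claim~\ref{claim:p=0} we already have $P=\emptyset$, so $V(G)=K_1\sqcup K_2$; there is no residual exceptional set for the cycle $C=\lk_G(x_1x_2)$ to wander into. Furthermore, the very definition of $E_i$ says that $E_i\supseteq T_i$ together with all $S_i$-neighbours of $T_i$; since segments lie in $K_i\setminus E_i$, every segment vertex $x_i$ satisfies $N_{x_i}\cap T_i=\emptyset$ and hence, by Claim~\ref{claim:si-degree2}, $\deg(x_i,K_i)\le 2$. Those at most two $K_i$-neighbours of $x_i$ are precisely its neighbours $x_i',x_i''$ on the cycle $\lk_G e_{3-i}$. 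Therefore
\[
V(C)\subseteq (N_{x_1}\cap K_1)\cup(N_{x_2}\cap K_2)\subseteq\{x_1',x_1'',x_2',x_2''\},
\]
and since $|C|\ge 4$ by Definition~\ref{def:fascinating}b), the cycle passes through all four. In particular $x_1x_2',x_1x_2'',x_2x_1',x_2x_1''\in E(G)$, and iterating gives the claim. No planarity, no $K_{3,3}$, no boundary analysis is needed; the whole thing is three lines once you use $P=\emptyset$ and the definition of $E_i$.
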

\begin{proof}
If $x_1',x_1''$ are the neighbours of $x_1$ in $K_1$ and $x_2',x_2''$ are the neighbours of $x_2$ in $K_2$, then the link $\lk_Gx_1x_2$ is a cycle contained  in $\{x_1',x_1'',x_2',x_2''\}$, hence, by Definition~\ref{def:fascinating}b) it must pass through all those vertices.  Therefore $x_1x_2',x_1x_2'',x_2x_1',x_2x_1''\in E(G)$. By successively repeating the same argument for the newly forced edges we prove the claim.
\end{proof}

\begin{claim}\label{claim:long-segments}
If $R_1$ and $R_2$ are two long segments in $K_1$ and $K_2$ respectively then
$G[R_1,R_2]$ is complete bipartite.
\end{claim}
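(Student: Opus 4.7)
My plan is to reduce this claim to Claim~\ref{claim:joined-segments} (joined segments) by showing that at least one edge must exist between $R_1$ and $R_2$, after which completeness of $G[R_1,R_2]$ follows immediately. The leverage comes from the global scarcity of non-edges across the cut $(K_1,K_2)$ already established in~\eqref{eq:mbound}.

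More concretely, I would argue by contradiction. Suppose $G[R_1,R_2]$ has no edges. Since $R_1\subset K_1\setminus E_1\subset K_1$ and $R_2\subset K_2\setminus E_2\subset K_2$, every non-edge between $R_1$ and $R_2$ contributes to $b=\overline{e}(G[K_1,K_2])$. Because both segments are long, $|R_1|,|R_2|\ge \alpha n$, and therefore
\begin{equation*}
b\ \ge\ |R_1|\cdot|R_2|\ \ge\ \alpha^2 n^2\;.
\end{equation*}
For $n_0$ sufficiently large (using $\alpha<\gamma/1000$ and $\gamma$ fixed), this contradicts the bound $b<0.51n$ from~\eqref{eq:mbound}. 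Hence there exist $x_1\in R_1$ and $x_2\in R_2$ with $x_1x_2\in E(G)$.

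Once we have a single crossing edge between the two long segments, Claim~\ref{claim:joined-segments} applies directly to $R_1$ and $R_2$ (which are themselves segments in $K_1$ and $K_2$) and yields that $G[R_1,R_2]$ is complete bipartite, which is the desired conclusion. I do not expect any real obstacle here: the only thing to be careful about is that $R_1$ and $R_2$ are genuinely contained in $K_1$ and $K_2$ respectively, so that missing edges between them are counted by $b$; this is built into the definition of segments (they lie in $K_i\setminus E_i$). The length assumption is used only to beat the linear bound on $b$ with a quadratic lower bound on missing edges.
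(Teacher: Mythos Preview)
Your proof is correct and follows essentially the same approach as the paper: both argue that if $G[R_1,R_2]$ were not complete bipartite then (via Claim~\ref{claim:joined-segments}) it would be edgeless, yielding $\overline{e}(G[K_1,K_2])\ge |R_1||R_2|\ge \alpha^2 n^2$, contradicting~\eqref{eq:mbound}. The only cosmetic difference is that the paper phrases this via the contrapositive of Claim~\ref{claim:joined-segments} rather than first exhibiting a single crossing edge.
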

\begin{proof}
If not then, by Claim~\ref{claim:joined-segments}, the bipartite graph $G[R_1,R_2]$ does not contain any edges. 
Then
$$\overline e(G[K_1,K_2])\geq \overline e(G[R_1,R_2])=
|R_1|\cdot |R_2|\geq \frac{\alpha^2n^2}2\;,$$ a contradiction
to~\eqref{eq:mbound}.
\end{proof}

Let $L_1$, and $L_2$ be the vertex sets of all the long segments in $K_1$
and $K_2$, respectively. By Claim~\ref{claim:long-segments} the graph $G[L_1,L_2]$ is
complete bipartite. For $i=1,2$ choose edges $\widetilde e_i\in G[L_i]$ which
minimize the quantity
\begin{equation}|K_{3-i}\setminus V(\lk_G
\widetilde e_i)|\label{minimize}\end{equation} and let $C_i\subset K_i$ be the
vertex set of the cycle $\lk_G \widetilde e_{3-i}$.

\begin{claim}\label{claim:at-most-2} We have 
$|K_1\setminus C_1|+|K_2\setminus C_2|\leq 2.$
\end{claim}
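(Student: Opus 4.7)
The plan is to use a two-way double counting based on the minimality of the edges $\widetilde e_1,\widetilde e_2$, combined with the bound $b<0.51n$ established in~\eqref{eq:mbound}.

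First I would record two easy structural facts. Since $\widetilde e_i\in G[L_i]$ and $G[L_1,L_2]$ is complete bipartite by Claim~\ref{claim:long-segments}, every vertex of $L_{3-i}$ is adjacent to both endpoints of $\widetilde e_i$ and so lies in $\lk_G\widetilde e_i=C_{3-i}$; hence $L_{3-i}\subseteq C_{3-i}$. Write $a_i=|K_i\setminus C_i|$ and $e_i=e(G[L_i])$. Also, since every vertex of $L_i\subseteq K_i\setminus E_i$ has $\deg(\cdot,K_i)\le 2$ (Claim~\ref{claim:si-degree2} together with the definition of $E_i$), the subgraph $G[L_i]$ has maximum degree at most $2$.

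Next, by the minimality defining $\widetilde e_1$, for every $f\in G[L_1]$ we have $|K_2\setminus V(\lk_G f)|\ge a_2$. Summing and swapping the order of summation,
\[
e_1 a_2\;\le\;\sum_{f\in G[L_1]}\bigl|K_2\setminus V(\lk_G f)\bigr|\;=\;\sum_{v\in K_2}\bigl|\{f\in G[L_1]:\ v\notin V(\lk_G f)\}\bigr|.
\]
The inner quantity counts edges of $G[L_1]$ with at least one endpoint in $L_1\setminus N_v$, and is bounded above by $2(|L_1|-\deg(v,L_1))$ because $G[L_1]$ has maximum degree $\le 2$. Summing over $v\in K_2$ yields $e_1 a_2\le 2\overline{e}(G[L_1,K_2])$, and the symmetric version gives $e_2 a_1\le 2\overline{e}(G[L_2,K_1])$.

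Now, because $G[L_1,L_2]$ is complete bipartite, the non-edges counted by $\overline{e}(G[L_1,K_2])$ all lie in $L_1\times(K_2\setminus L_2)$ and those counted by $\overline{e}(G[L_2,K_1])$ in $L_2\times(K_1\setminus L_1)$. These two families are disjoint subsets of the non-edges of $G[K_1,K_2]$, so their sum is at most $b$. Combining,
\[
\min(e_1,e_2)\,(a_1+a_2)\;\le\;e_1 a_2+e_2 a_1\;\le\;2b.
\]
Since $K_i$ contains at most $3x\le 6/\gamma$ long segments of total length at least $0.4n$, each $e_i\ge 0.4n-6/\gamma>0.39n$ once $n\ge n_0$; together with $b<\tfrac n2+\tfrac 4{\gamma^2}<0.51 n$ from~\eqref{eq:mbound} this gives $a_1+a_2<\frac{2\cdot0.51n}{0.39n}<3$, hence $a_1+a_2\le 2$.

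The delicate point is the disjointness step above: without the fact that $G[L_1,L_2]$ is complete bipartite, the same estimate only produces the weaker bounds $a_i\le 2$ separately, and one needs the factor-two saving obtained by playing off the two symmetric counts against a single pool of at most $b$ non-edges. Keeping careful track of the max-degree-$2$ structure of $G[L_i]$ and of which non-edges are ``used'' in each half of the counting is what makes the argument tight enough to reach $a_1+a_2\le 2$ rather than $a_1+a_2\le 4$.
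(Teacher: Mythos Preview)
Your proof is correct and follows essentially the same approach as the paper's. Both arguments use the minimality of $\widetilde e_i$ to infer that every edge of $G[L_i]$ misses at least $|K_{3-i}\setminus C_{3-i}|$ vertices of $K_{3-i}$, convert this into a lower bound on $\overline{e}(G[L_i,K_{3-i}\setminus L_{3-i}])$ via the max-degree-$2$ structure of $G[L_i]$ (losing a factor of $2$), use the complete bipartiteness of $G[L_1,L_2]$ to ensure the two families of non-edges are disjoint and hence sum to at most $b$, and finally combine with $b<0.51n$ and $e(G[L_i])\gtrsim 0.4n$ to get $|K_1\setminus C_1|+|K_2\setminus C_2|<3$. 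Your explicit summation-swapping presentation is a bit more formal than the paper's, but the content is identical.
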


\begin{proof}
Let $d_i=|K_i\setminus C_i|$. By the optimality of the choice of $\widetilde e_i$ we
get that the link of every edge in $G[L_i]$ misses at least $d_{3-i}$ vertices of
$K_{3-i}$. Since $G[L_1,L_2]$ is complete bipartite by
Claim~\ref{claim:long-segments}, those missing edges must contribute to
$\overline{e}(G[L_i,K_{3-i}\setminus L_{3-i}])$. Recall that $G[L_i]$ is a collection of at most $3x\leq 6/\gamma$ vertex-disjoint paths (or a cycle) of total
length at least $0.4n$. We get $$\overline e(G[L_i,K_{3-i}\setminus
L_{3-i}])\geq \frac{d_{3-i}}{2}(|L_i|-3x)\geq d_{3-i}\cdot 0.19\cdot n.$$ 
The two sets of missing edges we count this way for $i=1,2$ are disjoint.
Therefore, using~\eqref{eq:mbound}
$$0.51n> b\geq \overline e(G[L_1,K_{2}\setminus L_{2}])+\overline e(G[L_2,K_{1}\setminus L_{1}])\geq 0.19n(d_1+d_2)$$ which implies $d_1+d_2<2.7$. That ends the proof.
\end{proof}

\bigskip
The graphs $G[C_1]$, $G[C_2]$ are cycles and the minimizing edges $\widetilde e_i\in
L_i\subset C_i$ satisfy $\lk_G\widetilde e_i=G[C_{3-i}]$. Together with
Claim~\ref{claim:at-most-2} it shows that $G$ is $t$-joinlike for $t\leq 2$.  If
$t=0$ then we are done. The case $t=1$ leads to a contradiction as shown in
Proposition~\ref{prop:1-join-deg-3}. We can therefore assume that $t=2$
and call the two exceptional vertices $q$ and $q'$. We can assume without
loss of generality that either
\begin{equation}\label{case11}K_1\setminus C_1=\{q\}, \quad K_2\setminus
C_2=\{q'\},\end{equation} or
\begin{equation}\label{case20}K_1\setminus C_1=\{q,q'\}, \quad K_2\setminus
C_2=\emptyset.\end{equation}

Define the following quantities for $i=1,2$,
\begin{align*}
d_i(q)=\deg(q,C_i)\quad&\mbox{and}\quad d_i(q')=\deg(q',C_i)\;,\\
e_i(q)=e(G[N_{q}\cap C_i])\quad&\mbox{and}\quad e_i(q')=e(G[N_{q'}\cap C_i])\;.
\end{align*}

Note that $e_i(q)\leq d_i(q)$ and $e_i(q')\leq d_i(q')$ since $G[N_{q}\cap C_i]$ and $G[N_{q'}\cap C_i]$ are induced subgraphs of cycles.

If any of the numbers $d_1(q), d_1(q'), d_2(q), d_2(q')$ is
at most~2, then the result follows from Proposition~\ref{prop:2-join-all}. We will therefore assume that $$\min\{d_1(q), d_1(q'), d_2(q), d_2(q')\}\ge 3\;.$$
The proof under this assumption splits into the two cases \eqref{case11} and \eqref{case20} and is presented in the next section.

\section{Two exceptional vertices of large degrees}
\label{section:2large}
In this section we show that each of the cases~\eqref{case11} and~\eqref{case20}
from the previous section leads to a contradiction. We use the same notation.

We are going to exploit the fact that the graphs $\lk_G q$ and $\lk_G q'$ are
planar. Recall that Euler's formula implies  an $h$-vertex planar graph can have
at most $3h-6$ edges. So, planar graphs are sparse,
and a substantial number of edges must be missing between $C_1$ and
$C_2$. A careful edge counting will lead to a contradiction.

We start with an auxiliary claim.
\begin{claim}
\label{claim:planaredgecount}
We have an inequality
$$\overline{e}(G[N_{q}\cap C_1, N_{q}\cap C_2])\geq
d_1(q)d_2(q)-3d_1(q)-3d_2(q)+e_1(q)+e_2(q)+6\;.$$
An analogous inequality holds for $q'$.
\end{claim}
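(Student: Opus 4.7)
The plan is to apply Euler's formula to a carefully chosen subgraph of $\lk_G q$. Recall that by Definition~\ref{def:fascinating}d) the graph $\lk_G q$ is planar, and its vertex set is exactly $N_q$.

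Consider the induced subgraph $H := (\lk_G q)[(N_q\cap C_1)\cup (N_q\cap C_2)]$. Then $H$ has $d_1(q)+d_2(q)$ vertices, and its edges split into three disjoint parts: the edges inside $N_q\cap C_1$ (of which there are $e_1(q)$ by definition), the edges inside $N_q\cap C_2$ (of which there are $e_2(q)$), and the bipartite edges between $N_q\cap C_1$ and $N_q\cap C_2$. Hence
\[
e(H)=e_1(q)+e_2(q)+e(G[N_q\cap C_1,N_q\cap C_2]).
\]

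Since $H$ is a subgraph of the planar graph $\lk_G q$, the graph $H$ is itself planar. Because we are in the regime $d_1(q),d_2(q)\ge 3$, we have $|V(H)|\ge 6\ge 3$, so Euler's formula gives the standard bound
\[
e(H)\le 3(d_1(q)+d_2(q))-6.
\]

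Combining these two identities yields
\[
e(G[N_q\cap C_1,N_q\cap C_2])\le 3d_1(q)+3d_2(q)-6-e_1(q)-e_2(q),
\]
and subtracting from the total $d_1(q)d_2(q)$ of possible bipartite edges gives the claimed lower bound on $\overline e(G[N_q\cap C_1,N_q\cap C_2])$. The argument for $q'$ is word-for-word identical, using planarity of $\lk_G q'$. There is no real obstacle here: the claim is a one-line Euler-formula computation once one notices that $e_1(q)$, $e_2(q)$, and the bipartite edges inside $N_q$ all sit inside the single planar graph $\lk_G q$.
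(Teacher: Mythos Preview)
Your proof is correct and essentially identical to the paper's: both consider the planar graph $G[N_q\cap(C_1\cup C_2)]$ (which is exactly your $H$), count its vertices and edges, and apply the Euler bound $e\le 3v-6$. The paper's version is a bit more terse and does not explicitly justify $|V(H)|\ge 3$, but the argument is the same.
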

\begin{proof}
The graph $G[N_{q}\cap(C_1 \cup C_2)]$ is a planar graph with $d_1(q)+d_2(q)$
vertices and $$d_1(q)d_2(q)-\overline{e}(G[N_{q}\cap C_1, N_{q}\cap
C_2])+e_1(q)+e_2(q)$$ edges. The claim now follows from Euler's formula. 
\end{proof}

From previous estimates we have $\frac{n}{2}-2\alpha n\leq |C_i|\leq
\frac{n}{2}+2\alpha n$. The next easy statement records the fact that if $q$ is
adjacent to most of $C_i$ then $\lk_Gq$ also contains most of the edges from
$G[C_i]$.
\begin{claim}
\label{easy-pizi}
Suppose $\beta\geq 4\alpha$. If $d_i(q)\geq \frac{n}{2}(1-\beta)$ then
$e_i(q)\ge \frac{n}{2}(1-5\beta)$. The same holds for $q'$.
\end{claim}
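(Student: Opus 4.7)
The plan is to exploit the fact that $G[C_i]$ is a cycle of length $|C_i|$, which makes $e_i(q)$ equal to the number of cycle-edges with both endpoints surviving the deletion of $C_i \setminus N_q$. First I would note that every vertex of $C_i$ has cycle-degree exactly $2$, so throwing away the $|C_i| - d_i(q)$ vertices outside $N_q$ destroys at most $2(|C_i| - d_i(q))$ edges of the cycle. This immediately yields the basic inequality
\begin{equation*}
e_i(q) \;\geq\; |C_i| - 2\bigl(|C_i| - d_i(q)\bigr) \;=\; 2d_i(q) - |C_i|.
\end{equation*}

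Next I would plug in the two available bounds: the hypothesis $d_i(q) \geq \tfrac{n}{2}(1-\beta)$ together with the estimate $|C_i| \leq \tfrac{n}{2} + 2\alpha n$ recorded in the paragraph immediately preceding the claim. This gives
\begin{equation*}
e_i(q) \;\geq\; n(1-\beta) - \tfrac{n}{2} - 2\alpha n \;=\; \tfrac{n}{2}\bigl(1 - 2\beta - 4\alpha\bigr).
\end{equation*}
Finally, the assumption $\beta \geq 4\alpha$ forces $4\alpha \leq 3\beta$, so $1 - 2\beta - 4\alpha \geq 1 - 5\beta$, which delivers the desired lower bound $e_i(q) \geq \tfrac{n}{2}(1-5\beta)$. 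The statement for $q'$ is identical with $q'$ substituted throughout.

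There is no genuine obstacle here: the whole argument is a one-line counting observation about induced subgraphs of cycles, followed by a routine arithmetic verification using the slack $\beta \geq 4\alpha$. The slight subtlety worth flagging is to confirm that $G[C_i]$ contains no chord edges (so that $e_i(q)$ really is controlled by vertex deletion in the cycle), which is guaranteed because $C_i$ was defined as the vertex set of the cycle $\lk_G \widetilde{e}_{3-i}$ and cycles carry no additional edges.
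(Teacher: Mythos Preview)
Your proof is correct and follows essentially the same approach as the paper: both arguments observe that deleting the $|C_i|-d_i(q)$ vertices of $C_i\setminus N_q$ from the cycle $G[C_i]$ kills at most twice that many edges, then plug in the bounds on $d_i(q)$ and $|C_i|$ together with $\beta\ge 4\alpha$. Your arithmetic is in fact slightly tighter (you obtain $\tfrac{n}{2}(1-2\beta-4\alpha)$ before relaxing, whereas the paper gets $\tfrac{n}{2}(1-4\beta-4\alpha)$ by separately bounding the number of missed vertices and the total edge count), but the idea is identical.
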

\begin{proof}
Since $|C_i|\leq \frac{n}{2}+2\alpha n$ the set $N_{q}$ misses at most
$$\frac{n}{2}+2\alpha n-\frac{n}{2}(1-\beta) = n\left(\frac12\beta
+2\alpha\right)\leq \beta n$$ vertices of $C_i$. Recall that $G[C_i]$ is a
cycle. It follows that at most $2\beta n$ edges
of $G[C_i]$ are not in $\lk_Gq$. Hence $$e_i(q)\geq \frac{n}{2}-2\alpha n-2\beta n=\frac{n}{2}(1-4\alpha-4\beta)\geq \frac{n}{2}(1-5\beta).$$
\end{proof}


\medskip
\subsection{The case \eqref{case11}.}
By Claim~\ref{claim:degKi} we have $d_1(q),d_2(q')\leq \frac4\gamma$. Therefore $$\overline{e}(G[C_1,C_2])\geq
\overline{e}(G[N_q\cap C_1,N_q\cap C_2])+\overline{e}(G[N_{q'}\cap C_1,N_{q'}\cap C_2])-\frac{16}{\gamma^2}\;.$$

The inequality
\begin{align*}
\frac{1}{4}(n^2+2n+17)&< e(G)\leq
\left(\frac{n-2}{2}\right)^2+n+\deg(q)+\deg(q')-\bar{e}(G[C_1,C_2])\\ &\leq
\frac{n^2}{4}+\deg(q)+\deg(q')\\
&~~~~-\bar{e}(G[N_q\cap C_1,N_q\cap
C_2])-\bar{e}(G[N_{q'}\cap C_1,N_{q'}\cap C_2])+\frac{16}{\gamma^2}+1
\end{align*}
together with Claim~\ref{claim:planaredgecount} and $e_1(q),e_2(q')\le \frac{4}{\gamma}$ gives
\begin{equation}
\label{local1}
\frac12 n+(d_1(q)-4)(d_2(q)-4)+(d_1(q')-4)(d_2(q')-4)+e_2(q)+e_1(q')\leq O(1)\;,
\end{equation}
where $O(1)$ denotes some universal constant (depending on $\gamma$) whose exact value does not matter.
Observe that if $d_1(q)\geq 4$ then the inequalities $d_1(q)\leq \frac{4}\gamma$
and $d_2(q)\geq 3$ imply $(d_1(q)-4)(d_2(q)-4)\geq -\frac{4}\gamma$. A similar
observation holds for $q'$. Therefore, if $d_1(q),d_2(q')\geq 4$ then
we get a contradiction because then the left-hand  side of~\eqref{local1} is at
least $\frac12 n-\frac{8}\gamma$.

Let us then assume that $d_1(q)=3$. Then the inequality \eqref{local1} becomes
\begin{equation}
\label{knedliki}
\frac12 n+(d_1(q')-4)(d_2(q')-4)+e_2(q)+e_1(q')\leq d_2(q)+O(1)\;.
\end{equation}
If $d_2(q')\geq 4$ then $(d_1(q')-4)(d_2(q')-4)\ge -\frac{4}{\gamma}$, and
therefore~\eqref{knedliki} implies $d_2(q)\geq 0.49n$. By Claim~\ref{easy-pizi} we
have $e_2(q)\geq 0.45n$ and plugging this back into \eqref{knedliki} we get
$d_2(q)\geq \frac12 n+0.45n-O(1)\geq 0.94n$, which is a contradiction with
$d_2(q)\leq |C_2|\leq 0.51n$.

We are now left with the case when $d_1(q)=d_2(q')=3$ and~\eqref{knedliki}
reduces to
\begin{equation}
\label{eq:opp-sides-deg3}
\frac12 n+e_2(q)+e_1(q')\leq d_2(q)+d_1(q')+O(1)\;.
\end{equation}
We now need the following claim.

\begin{claim}
If $v\in C_2$ is an isolated vertex of the graph $G[N_q\cap C_2]$ then $vq'\in
E(G)$.
\end{claim}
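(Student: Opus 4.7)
The plan is to apply Definition~\ref{def:fascinating}(b) to the edge $vq$: the link $\lk_G(vq)$ must be a cycle of length at least $4$, and I will show the hypothesis forces the pool of potential vertices in this link to be so small that $q'$ is the only way to reach four.

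First, I would pin down which vertices can lie in $\lk_G(vq) = (N_v \cap N_q) \setminus \{v,q\}$. Since $G[C_2]$ is a cycle, the $G$-neighbors of $v$ inside $C_2$ are exactly its two cycle-neighbors, say $v_-$ and $v_+$. The assumption that $v$ is isolated in $G[N_q \cap C_2]$ says $v$ has no $G$-edge to any other vertex of $N_q \cap C_2$; together with the previous sentence, this forces $v_-, v_+ \notin N_q$, hence $N_v \cap N_q \cap C_2 = \emptyset$.

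Second, I would use the assumption of case~\eqref{case11}, which says the only exceptional vertices are $q$ and $q'$, so $N_q \subseteq C_1 \cup C_2 \cup \{q'\}$. Combined with the previous step,
\[
\lk_G(vq) \subseteq (N_v \cap N_q \cap C_1) \cup \{q'\}.
\]
Because $d_1(q) = 3$, the first set has at most $3$ elements, so $|\lk_G(vq)| \leq 4$. Since $\lk_G(vq)$ is a cycle, it has at least $4$ vertices, and equality must hold everywhere; in particular $q'$ belongs to $\lk_G(vq) \subseteq N_v$, giving $vq' \in E(G)$.

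There is no real obstacle here beyond carefully translating the isolation hypothesis into the statement that the two cycle-neighbors of $v$ miss $N_q$: once that is done, the proof is a tight pigeonhole argument in which the bounds $d_1(q) = 3$ and $|\lk_G(vq)| \geq 4$ leave exactly one seat in the link, which must be occupied by $q'$.
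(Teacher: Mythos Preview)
Your proof is correct and follows essentially the same approach as the paper: both argue that $\lk_G(qv)$ is contained in $(N_q\cap C_1)\cup\{q'\}$, a set of size~$4$, and then invoke Definition~\ref{def:fascinating}(b) to force $q'$ into the link. Your write-up is in fact a bit more explicit than the paper's in spelling out why $N_v\cap N_q\cap C_2=\emptyset$ (via the cycle-neighbors $v_-,v_+$), which is a useful clarification.
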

\begin{proof}
The cycle $\lk_Gqv$ is contained in $(N_q\cap C_1)\cup\{q'\}$ and since
$d_1(q)=3$, the latter set has $4$ vertices. By
Definition~\ref{def:fascinating}b) $\lk_Gqv$ must pass through all of them
and in particular $q'\in N_v$.
\end{proof}
Because $d_2(q')=3$ the claim implies that $G[N_q\cap C_2]$ can have at most $3$
isolated vertices and therefore $e_2(q)\geq \frac12 (d_2(q)-3)$. By symmetry we
get  $e_1(q')\geq \frac12 (d_1(q')-3)$ and~\eqref{eq:opp-sides-deg3} implies
\begin{equation}
n\leq d_1(q')+d_2(q)+O(1)\;.
\end{equation}
It follows that $d_1(q'),d_2(q)\geq 0.48 n$ but then, by Claim~\ref{easy-pizi},
$e_1(q'),e_2(q)\geq 0.4n$ and going back to the inequality \eqref{eq:opp-sides-deg3} gives a contradiction.

\medskip
\subsection{The case \eqref{case20}.}

This time we have $d_1(q),d_1(q')\leq \frac{4}\gamma$. The missing edges in
$G[N_q\cap C_1, N_q\cap C_2]$ and $G[N_{q'}\cap C_1, N_{q'}\cap C_2]$ can have a
significant overlap, so we begin by using just the contribution of one of them to obtain a bound. We have
\begin{equation*}
\frac{1}{4}(n^2+2n+17)< e(G)\leq
\left(\frac{n-2}{2}\right)^2+n+\deg(q)+\deg(q')-\bar{e}(G[N_q\cap C_1,N_q\cap C_2])\;,
\end{equation*}
and plugging in the bound from Claim~\ref{claim:planaredgecount} we obtain
\begin{equation}
\label{eq:p}
\frac12 n+(d_1(q)-4)(d_2(q)-4)+e_2(q)\leq d_2(q')+O(1)\;.
\end{equation}
In the same way we obtain a symmetric version with $q$ and $q'$ interchanged:
\begin{equation}
\label{eq:q}
\frac12 n+(d_1(q')-4)(d_2(q')-4)+e_2(q')\leq d_2(q)+O(1)\;.
\end{equation}
Now suppose that $d_1(q)\geq 4$. Then $(d_1(q)-4)(d_2(q)-4)\geq -\frac4\gamma$,
and so~\eqref{eq:p} implies $d_2(q')\geq 0.49 n$. Therefore, $e_2(q')\geq
0.45n$ by Claim~\ref{easy-pizi}. Then the inequality~\eqref{eq:q} can be rewritten as
\begin{align*}
d_2(q)& \geq  \frac12 n+(d_1(q')-4)(d_2(q')-4)+e_2(q')-O(1)\\
& \geq 0.94n+(d_1(q')-4)(d_2(q')-4)\;.
\end{align*}
This inequality can only be satisfied if the last product is negative, which
implies $d_1(q')=3$. Using $d_2(q')\leq 0.51n$ we further obtain
\begin{align*}
d_2(q)& \geq  0.94n-0.51n=0.43n\;.
\end{align*}
By Claim~\ref{easy-pizi} we get $e_2(q)\geq 0.15n$, but then~\eqref{eq:p} gives 
$$d_2(q')\geq \frac12 n+0.15n-O(1)\geq 0.64n\;,$$
which is a contradiction.

By symmetry we also arrive at a contradiction assuming that $d_1(q')\geq 4$. It means that we must have $d_1(q)=d_1(q')=3$.

We have that $|(N_q\cup N_{q'})\cap C_1|\le 6$. Consequently, there are only a
finite number of possibilities for the graph $G[(N_q\cup N_{q'})\cap C_1]$. We
will first show that the actual possibilities for $G[(N_q\cup N_{q'})\cap C_1]$ are even more limited. Call a vertex $v\in C_1$ \emph{free} if $v\not\in N_q\cup N_{q'}$, a \emph{$q$-vertex} if $v\in N_q\setminus N_{q'}$, a
\emph{$q'$-vertex} if $v\in N_{q'}\setminus N_{q}$, a \emph{$qq'$-vertex} if
$v\in N_q\cap N_{q'}$ and a \emph{boundary} vertex if $v$ belongs to an edge
$e\in G[C_1]$ such that $\lk_G e\cap\{q,q'\}=\emptyset$. Observe that each free vertex is also boundary.

\begin{claim}
\label{claim:pq-stuff}
The vertices in $C_1$ have the following properties: 
\begin{itemize}
\item[a)] if $v\in C_1$ is boundary then $C_2\subset N_v$,
\item[b)] if $v\in C_1$ is a $q$-vertex then at least one of its neighbors in $C_1$ is in $N_q$,
\item[b')] if $v\in C_1$ is a $q'$-vertex then at least one of its neighbors in
$C_1$ is in $N_{q'}$,
\item[c)] if $v\in C_1$ is a $qq'$-vertex then at least one of its neighbors in
$C_1$ is in $N_q\cup N_{q'}$,
\item[d)] if $e_1,e_2\in G[C_1]$ are two vertex-disjoint edges, such that
$\lk_Ge_1$ contains $q$ but not $q'$ and $\lk_Ge_2$ contains $q'$ but not $q$,
then in at least one of those edges both endpoints are non-boundary,
\item[e)] if $v$ is a $q$-vertex and $w$ is a $q'$-vertex then $vw\not\in
E(G[C_1])$.
\end{itemize}
\end{claim}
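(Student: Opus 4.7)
The plan is to prove all five parts by exploiting the structural constraints of Definition~\ref{def:fascinating}: every edge link is a cycle of length at least $4$, every triangle link is a discrete $2$-vertex graph, and every vertex link is a planar triangulation of $S^2$ (so by Kuratowski's theorem contains no $K_{3,3}$). Throughout, $a,b$ denote the two $C_1$-neighbors of the vertex $v$ under consideration; since $G[C_1]$ is a cycle of length $\ge 4$, we have $ab\notin E(G)$ and $a,b$ share no $C_1$-neighbor besides $v$. In particular, for any edge $vw$ of $G[C_1]$ the link $\lk_G(vw)$ cannot meet $C_1\setminus\{v,w\}$, so it always lies in $C_2\cup\{q,q'\}$.

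Part (a) is almost immediate. Under the boundary hypothesis we moreover have $\lk_G(vw)\cap\{q,q'\}=\emptyset$, hence $\lk_G(vw)\subseteq C_2$; being a cycle of length $\ge 4$ inside the cycle $G[C_2]$, it must coincide with $G[C_2]$, giving $C_2\subseteq N_v$. I will refer to this argument as the \emph{cycle-in-a-cycle trick}: whenever an edge can be shown to have its link disjoint from $\{q,q'\}$, the link equals $G[C_2]$ and both endpoints are joined to all of $C_2$.

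For (b), suppose toward contradiction that $v$ is a $q$-vertex and neither $a$ nor $b$ lies in $N_q$. Since $q'\notin N_v$ and $q\notin N_a\cup N_b$, the trick applied to each of the edges $vq$, $va$, $vb$ yields $\lk_G(vq)=\lk_G(va)=\lk_G(vb)=G[C_2]$, hence $C_2\subseteq N_a\cap N_b\cap N_q$. Inside $\lk_G v$ the triple $\{a,b,q\}$ is therefore an independent set jointly adjacent to every vertex of $C_2$, and since $|C_2|\ge 4$ this exhibits $K_{3,4}\supseteq K_{3,3}$ inside the planar graph $\lk_G v$, contradicting Definition~\ref{def:fascinating}d). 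Part (b') is symmetric.

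Parts (c), (d), (e) follow the same template with more bookkeeping. In (c) a case split on whether $qq'\in E(G)$ shows in both cases that three pairwise-non-adjacent vertices chosen from $\{a,b,q,q'\}$ are jointly adjacent to all of $C_2$ inside $\lk_G v$, producing the forbidden $K_{3,3}$. In (d) the trick applied to $e_1$ and $e_2$ forces each hypothetical boundary endpoint to be adjacent to all of $C_2$; combining such an endpoint with $q$ and $q'$ inside the link of a suitably chosen $c\in C_2$ again yields $K_{3,3}$. In (e), the assumption that $v$ is a $q$-vertex, $w$ is a $q'$-vertex and $vw\in G[C_1]$ makes $vw$ a boundary edge by the avoidance argument of (a), so $C_2\subseteq N_v\cap N_w$; then (b) applied to $v$ forces its other $C_1$-neighbor into $N_q$, symmetrically for $w$ into $N_{q'}$, and inspecting the triangle link $\lk_G(vwc)$ for $c\in C_2$ via Definition~\ref{def:fascinating}c) delivers the final contradiction.

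The main obstacle will lie in parts (c) and (d): the simultaneous presence of both exceptional vertices $q,q'$ and the possibility of the edge $qq'$ require careful case analysis to isolate an independent triple jointly adjacent to enough of $C_2$ to extract $K_{3,3}$. In contrast, (a), (b), (b') and (e) reduce cleanly to the cycle-in-a-cycle trick followed by a single Kuratowski application.
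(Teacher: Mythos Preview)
Your arguments for (a), (b), (b'), and (c) are correct; they differ slightly from the paper (which obtains the contradiction from the edge-link condition directly, by observing that $\lk_G(vw)$ contains a $4$-cycle together with the extra vertex $q$), but your $K_{3,3}$-in-a-vertex-link variant is equally valid. One small point: the ``cycle-in-a-cycle trick'' as you phrase it is set up for edges of $G[C_1]$, so when you apply it to $vq$ you should separately note that $\lk_G(vq)\cap C_1\subseteq N_v\cap C_1=\{a,b\}$ and $a,b\notin N_q$, whence $\lk_G(vq)\subseteq C_2$.

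Parts (d) and (e), however, do not go through as you describe. For (e), take any $c\in C_2$: the only common neighbours of $v,w,c$ are the two $C_2$-neighbours $c',c''$ of $c$ (since $q\notin N_w$, $q'\notin N_v$, and $v',w'$ are not $C_1$-adjacent to $w,v$ respectively). Thus $\lk_G(vwc)=\{c',c''\}$ is exactly the two-point discrete graph demanded by Definition~\ref{def:fascinating}c), and there is no contradiction. The paper proves (e) by reducing it to (d): the edges $v'v$ and $ww'$ form a pair as in (d), and both have a boundary endpoint ($v$ and $w$), which (d) forbids.

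For (d), the $K_{3,3}$ idea is blocked by the simple observation that if $x'$ is your boundary endpoint of $e_1$ then $x'\in N_q$ (because $q\in\lk_G e_1$), so $\{x',q,q'\}$ is never independent; no other natural triple suggests itself. The paper's proof of (d) is not local at all: from $C_2\subseteq N_{x'}$ one gets $\lk_G e_1=G[\{q\}\cup(N_x\cap C_2)]$, forcing $N_x\cap C_2$ to be a sub-path of $C_2$ with $q$ adjacent only to its two ends, and symmetrically for $y,q'$. Hence all but four vertices of $C_2$ miss an edge to $\{x,q\}$ \emph{and} an edge to $\{y,q'\}$, giving at least $2(|C_2|-4)\approx n$ missing edges in $G[K_1,K_2]$, which contradicts the global bound $b<0.51n$ of~\eqref{eq:mbound}. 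That counting step uses the density hypothesis on $G$ and cannot be replaced by a Kuratowski argument inside a single link.
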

\begin{proof}
a) Consider any edge $e\in G[C_1]$ such that $v\in e$ and $V(\lk_G e)\cap\{q,q'\}=\emptyset$. Then $\lk_Ge=G[C_2]$, so in particular $C_2\subset N_v$.

b) Let $v',v''\in C_1$ be the neighbors of $v$. If none of $v',v''$ is in $N_q$ then all three of $v,v',v''$ are boundary, so by a) all are adjacent to the whole $C_2$. Pick any vertex $w\in N_q\cap C_2$ and let $w',w''$ be its neighbors in $C_2$. Then the link $\lk_Gvw$ contains the cycle $w'v'w''v''$ and the vertex $q$, which is impossible. By symmetry we also get~b').

c) The proof is the same as b).

d) Suppose the contrary. Let $e_1=xx'$, $e_2=yy'$ where $x'$ and $y'$ are boundary vertices. By a) $C_2\subset N_{x'},N_{y'}$, therefore
$$\lk_G(e_1)=G[\{q\}\cup(N_x\cap C_2)],\quad \lk_G(e_2)=G[\{q'\}\cup(N_y\cap C_2)].$$
It follows that $G[N_x\cap C_2]$ is a path within $C_2$ and $q$ is adjacent only to the endpoints of that path. The same argument for $y$ and $q'$ shows that $G[N_y\cap C_2]$ is a path with $q'$ adjacent only to the endpoints of that path. It follows that, except for up to $4$ special vertices, every vertex in $C_2$ is missing an edge to either $q$ or $x$ and it is missing an edge to either $q'$ or $y$. Since $x,y,q,q'$ are four different vertices this yields at least $2(|C_2|-4)\approx n$ missing edges from $K_2$ to $K_1$, contradicting~\eqref{eq:mbound}.

e) Suppose $vw$ is an edge. Then $v$ and $w$ are both boundary. Let $v'vww'$ be the $4$-vertex path on the cycle $G[C_1]$. By~b) and~b') we have $v'\in N_q$ and $w'\in N_{q'}$. Then the edges $vv'$ and $ww'$ contradict d).
\end{proof}

\begin{center}
\begin{figure}
\includegraphics[scale=0.6]{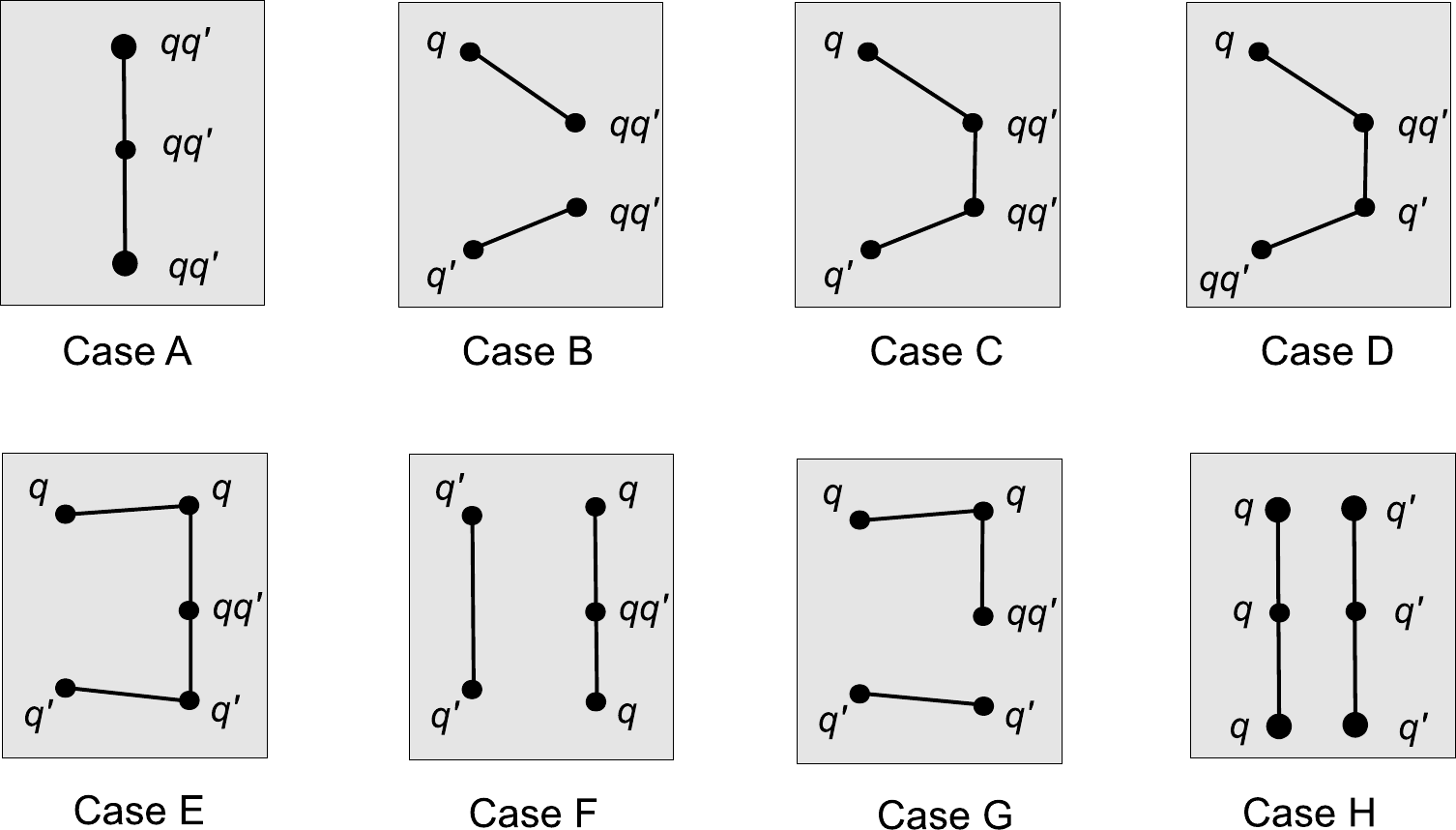} 
\caption{Seven possibilities of the graph $G[(N_q\cup N_{q'})\cap C_1]$ with the
types of the vertices (types $q$-, $q'$-, and $qq'$-).
\label{fig:possibilities}}
\end{figure}
\end{center} 
It turns out that Claim~\ref{claim:pq-stuff} provides enough information to restrict $G[(N_q\cup N_{q'})\cap C_1]$ to just one possibility.
\begin{claim}\label{claim:GqqCdetermined}
We have $N_{q}\cap C_1=N_{q'}\cap C_1=\{v_1,v_2,v_3\}$ where $v_1,v_2,v_3$ are three consecutive vertices in $C_1$.
\end{claim}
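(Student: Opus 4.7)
The plan is to perform a case analysis on $k := |N_q \cap N_{q'} \cap C_1|$; since $|N_q \cap C_1| = |N_{q'} \cap C_1| = 3$, we have $k \in \{0, 1, 2, 3\}$.

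In the case $k = 3$, all three vertices of $N_q \cap C_1 = N_{q'} \cap C_1$ are $qq'$-vertices. Applying Claim~\ref{claim:pq-stuff}c) to each, each has a $C_1$-neighbor among the other two. Because $|C_1| \ge n/2 - 2\alpha n \gg 4$, the only way three vertices on a cycle can each have a $C_1$-neighbor within the set is that they form three consecutive vertices of $C_1$; this gives the claim.

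In the cases $k \in \{0, 1, 2\}$, both $N_q \cap C_1$ and $N_{q'} \cap C_1$ contain a ``pure'' vertex of the appropriate type; call them $u$ (pure $q$) and $w$ (pure $q'$). Using b), b'), c), e) of Claim~\ref{claim:pq-stuff}, I plan to enumerate all cyclic arrangements (up to symmetry) of the at most six special vertices on $C_1$. For each arrangement I will exhibit vertex-disjoint edges $e_1, e_2 \in G[C_1]$ with $q \in \lk_G e_1$, $q' \notin \lk_G e_1$ and $q' \in \lk_G e_2$, $q \notin \lk_G e_2$ — that is, $e_1$ has both endpoints in $N_q$ with $u$ as one endpoint, and symmetrically $e_2$ has both endpoints in $N_{q'}$ with $w$ as one endpoint. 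I will then show that neither $e_1$ nor $e_2$ has both endpoints non-boundary, contradicting d).

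The boundary check is driven by one uniform observation: a special vertex $v \in (N_q \cup N_{q'}) \cap C_1$ is non-boundary if and only if both of its $C_1$-neighbors are also special, because the edge from $v$ to any free $C_1$-neighbor has link disjoint from $\{q, q'\}$. Since there are at most six special vertices and e) forbids pure-$q$--pure-$q'$ $C_1$-edges, the special vertices lie in a few short consecutive blocks on $C_1$; only interior vertices of such blocks are non-boundary, while the end vertices always have a free $C_1$-neighbor and are therefore boundary. Checking this in each of the (few) arrangements for $k \in \{0, 1, 2\}$ always forces at least one endpoint of each candidate edge to be boundary, yielding the contradiction with d).

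The main obstacle is simply that the enumeration in the cases $k \in \{0, 1, 2\}$ splits into several subcases depending on which pairs among the special vertices are $C_1$-adjacent (for instance, in the $k = 1$ case one has to separate whether the two pure $q$-vertices are $C_1$-adjacent to each other or both $C_1$-adjacent to the unique $qq'$-vertex, which determines the cyclic pattern around $v$). No individual subcase is hard, and the bound $|C_1| \ge n/2 - 2\alpha n$ lets me discard degenerate configurations with $|C_1| = 4$ that would otherwise require separate treatment, so only a short list of arrangements needs to be processed.
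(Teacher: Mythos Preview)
Your proposal is correct and follows essentially the same route as the paper. The paper first observes (from parts b), b'), c), e) of Claim~\ref{claim:pq-stuff}) that $G[(N_q\cup N_{q'})\cap C_1]$ is a cycle-free graph with minimum degree $1$ and no $q$--$q'$ edges, then enumerates the eight possible labeled path-forests (organized by the number of $qq'$-vertices, exactly your parameter $k$) and eliminates all but the three-consecutive-$qq'$ configuration by exhibiting, in each of the other seven cases, a pair of edges violating part d). Your plan to case on $k$, handle $k=3$ directly, and for $k\le 2$ produce the edges $e_1,e_2$ anchored at a pure $q$-vertex and a pure $q'$-vertex is the same argument with a slightly different bookkeeping; your boundary/non-boundary criterion (``both $C_1$-neighbors special'') is precisely what drives the paper's check of d) as well.
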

\begin{proof}
Claim~\ref{claim:pq-stuff} gives us that $G[(N_q\cup N_{q'})\cap C_1]$ is a graph with no cycle, in which every vertex has degree $1$ or $2$, and there is no edge from a $q$-vertex to a $q'$-vertex. By considering the possible number of $qq'$-vertices ($3$, $2$, $1$ or $0$) and then their degrees, we obtain eight graphs which satisfy the above property, up to exchanging $q$ and $q'$. They are shown in Figure~\ref{fig:possibilities}. The graphs B--H have a pair of edges which violates Claim~\ref{claim:pq-stuff}d). That leaves us only with Case A.
\end{proof}

As all the vertices in $C_1$ except $v_2$ are boundary, we have by Claim~\ref{claim:pq-stuff}a) that $C_2\subset N_{v}$ for each $v\in C_1\setminus \{v_2\}$.
\begin{claim}
\label{claim:nopqedge}
There is no edge $e\in G[C_2]$ with $q,q'\in\lk_G e$.
\end{claim}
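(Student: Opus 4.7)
The plan is to derive a contradiction by a degree count inside $\lk_G e$. Suppose towards a contradiction that $e = uw \in G[C_2]$ satisfies $q, q' \in \lk_G e$. By Definition~\ref{def:fascinating}b), $\lk_G e$ is a cycle, so every vertex in $\lk_G e$ has exactly two neighbors inside $\lk_G e$. The strategy is to pinpoint one vertex of $\lk_G e$ and exhibit three distinct neighbors of it inside $\lk_G e$.

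The vertex to focus on is $v_1$ (from Claim~\ref{claim:GqqCdetermined}). First I would observe that every vertex of $C_1 \setminus \{v_2\}$ lies in $\lk_G e$: by Claim~\ref{claim:pq-stuff}a) such a vertex is adjacent to all of $C_2$, and in particular to both endpoints $u, w$ of $e$. Consequently, $v_1, v_3$, and the neighbor $u_1$ of $v_1$ on the cycle $G[C_1]$ other than $v_2$ all belong to $\lk_G e$. Since $|C_1| \geq n/2 - 2\alpha n \geq 4$, the vertex $u_1$ is distinct from both $v_2$ and $v_3$.

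Next I would list three neighbors of $v_1$ that are all forced to lie in $\lk_G e$: the two exceptional vertices $q$ and $q'$ (they belong to $\lk_G e$ by the contradictory hypothesis, and $v_1 q, v_1 q' \in E(G)$ because $v_1 \in N_q \cap N_{q'} \cap C_1$ by Claim~\ref{claim:GqqCdetermined}), together with $u_1$ (neighbor of $v_1$ in $G[C_1]$, and $u_1 \in \lk_G e$ by the previous paragraph). These are three pairwise distinct vertices, so $v_1$ has degree at least $3$ in the induced subgraph $\lk_G e$, contradicting that $\lk_G e$ is a cycle.

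I do not expect any serious obstacle: the case analysis performed earlier, together with Claim~\ref{claim:pq-stuff}a) and Claim~\ref{claim:GqqCdetermined}, has already pinned down the local structure around $v_1$ tightly enough that the degree bound is immediate. The only thing to double-check is that $u_1 \notin \{v_2, v_3\}$, which is guaranteed by $|C_1|$ being large, so that $u_1$ really is a \emph{third} neighbor of $v_1$ in $\lk_G e$ distinct from $q$ and $q'$.
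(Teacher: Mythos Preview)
Your proposal is correct and follows exactly the same idea as the paper's proof: show that $v_1$ would have degree at least $3$ in the cycle $\lk_G e$, using $q$, $q'$, and the $C_1$-neighbour of $v_1$ other than $v_2$ as the three witnesses. The paper compresses this into a single sentence, while you spell out why $u_1\in\lk_G e$ (via Claim~\ref{claim:pq-stuff}a)); note that the check $u_1\neq v_3$ is in fact unnecessary, since all you need is $u_1\notin\{q,q',v_2\}$, which is automatic.
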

\begin{proof}
If $e$ was such an edge then $v_1$ would be a vertex of degree $3$ in $\lk_G e$.
\end{proof}

\begin{claim}
\label{claim:small}
We have $|N_q\cap N_{v_2}\cap C_2|\leq 2$ and $|N_{q'}\cap N_{v_2}\cap C_2|\leq 2$.
\end{claim}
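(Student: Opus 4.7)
The plan is to examine the link of the edge $qv_2$, which exists because $v_2\in N_q$ by Claim~\ref{claim:GqqCdetermined}, and apply Definition~\ref{def:fascinating}b) to deduce that $\lk_G(qv_2)$ is a cycle of length at least $4$. First I would identify the vertex set of this cycle explicitly, and then extract a degree constraint that forces the claimed bound on $W:=N_q\cap N_{v_2}\cap C_2$.

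Since $V(G)=C_1\sqcup C_2\sqcup\{q,q'\}$, the vertices of $\lk_G(qv_2)$ are the common neighbors of $q$ and $v_2$ contributed by each of these pieces. In $C_1$, the neighbors of $v_2$ are exactly $v_1$ and $v_3$, and both lie in $N_q$ by Claim~\ref{claim:GqqCdetermined}, contributing $\{v_1,v_3\}$. In $C_2$, the contribution is $W$ by definition. The vertex $q'$ belongs to the link precisely when $qq'\in E(G)$; denote this indicator by $\eta\in\{0,1\}$. I then read off the degree of $v_1$ inside this cycle. Since $|C_1|\geq 4$, the vertices $v_1$ and $v_3$ are non-adjacent in the induced cycle $G[C_1]$. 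The other $C_1$-neighbor of $v_1$ does not lie in $\{v_1,v_2,v_3\}=N_q\cap C_1=N_{q'}\cap C_1$, so $v_1$ is boundary, and Claim~\ref{claim:pq-stuff}a) gives $C_2\subset N_{v_1}$; in particular $v_1$ is adjacent to every element of $W$. Moreover $v_1\in N_{q'}$, so $v_1$ is also adjacent to $q'$ whenever $q'$ belongs to the link. Hence the degree of $v_1$ inside $\lk_G(qv_2)$ equals $|W|+\eta$, and since every vertex of a cycle has degree $2$, this forces $|W|\leq 2$. The symmetric argument applied to the edge $q'v_2$ then yields $|N_{q'}\cap N_{v_2}\cap C_2|\leq 2$.

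The only piece of bookkeeping that needs care is the $q'$-contribution: one must separate the question of whether $q'$ sits in the link (which depends on whether $qq'$ is an edge) from the fact that $v_1q'$ is always an edge by Claim~\ref{claim:GqqCdetermined}. Apart from this, the claim is a single application of Definition~\ref{def:fascinating}b) combined with the rigid structure of $N_q\cap C_1$ established in Claim~\ref{claim:GqqCdetermined}.
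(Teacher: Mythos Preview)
Your argument is correct but follows a different route from the paper. The paper argues directly in the vertex link $\lk_G q$: if three vertices $w_1,w_2,w_3$ lay in $N_q\cap N_{v_2}\cap C_2$, then $\{v_1,v_2,v_3\}$ and $\{w_1,w_2,w_3\}$ would form a $K_{3,3}$ inside $\lk_G q$ (using that $v_1,v_3$ are boundary and hence adjacent to all of $C_2$), contradicting Definition~\ref{def:fascinating}d). You instead work in the edge link $\lk_G(qv_2)$ and use Definition~\ref{def:fascinating}b): identifying its vertex set as $\{v_1,v_3\}\cup W\cup(\{q'\}\text{ if }qq'\in E(G))$ and noting that $v_1$ is adjacent to every listed vertex except $v_3$, the cycle condition forces $|W|+\eta=2$.

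Both arguments are short; yours actually extracts slightly more, namely $|W|=2-\eta$, which anticipates the case split on $qq'\in E(G)$ that the paper performs immediately afterwards. The paper's approach has the minor advantage of not needing the observation $v_1v_3\notin E(G)$ (which, as you note, requires $|C_1|\geq 4$; this holds since $C_1$ is itself an edge link and hence has length at least $4$ by Definition~\ref{def:fascinating}b)).
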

\begin{proof}
Any $3$ vertices in $N_q\cap N_{v_2}\cap C_2$ together with $\{v_1,v_2,v_3\}$ would form a $K_{3,3}$ in $\lk_Gq$, contradicting Definition~\ref{def:fascinating}d).
\end{proof}

To complete the proof we consider two cases. First suppose $qq'\in E(G)$. Then, we have $|N_q\cap N_{q'}\cap C_2|\leq 2$. Indeed, otherwise $v_1$ would be a vertex of degree at least $3$ in $\lk_Gqq'$, a contradiction to Definition~\ref{def:fascinating}b). It follows that every vertex of $C_2$, except for at most $6$ special ones, is adjacent to at most one element of $\{q,q',v_2\}$, and then there at least $2(|C_2|-6)\approx n$ edges missing from $K_2$ to $K_1$. This contradicts~\eqref{eq:mbound}.

Now suppose $qq'\not\in E(G)$. Then $\lk_Gqv_3=G[\{v_2\}\cup(N_q\cap C_2)]$ and $\lk_Gq'v_3=G[\{v_2\}\cup(N_{q'}\cap C_2)]$.
It means that $G[N_q\cap C_2]$ and $G[N_{q'}\cap C_2]$ are paths -- say $P$ and $P'$ -- within $C_2$. By Claim~\ref{claim:nopqedge}, $P$ and $P'$ share at most the endvertices. Moreover, the interior vertices of $P$ and $P'$ are not adjacent to $v_2$. Consequently, every vertex in $C_2$, except for at most $4$ special vertices, is adjacent to at most one element of $\{q,q',v_2\}$. Again, the total number of missing edges from $K_2$ to $K_1$ is at least $2(|C_2|-4)\approx n$, contradicting~\eqref{eq:mbound}.

\smallskip
This ends the consideration of the case \eqref{case20}, thereby completing the proof of Theorem~\ref{lem:extremalgraphtheory}.

\section{Exact results}
\label{section:exact}

In the proof of Theorem~\ref{lem:extremalgraphtheory} we used, as black-boxes, two results about the sparseness of certain $1$- and $2$-joinlike graphs --- Propositions~\ref{prop:1-join-deg-3} and \ref{prop:2-join-all}. They will be proved in this section. Unlike previously, when we were free to count edges with an accuracy of $\Theta(n)$, in this part we will need to determine the precise structure of some fascinating graphs and count their edges exactly.

In this section $G$ means any fascinating graph, which will always be $1$- or $2$-joinlike, with $C_1$, $C_2$ referring to the cycles from Definition~\ref{def:joinlike} and with exceptional vertices called $q$ and $q'$. We will frequently use the observation that if $q$ is an exceptional vertex of a $t$-joinlike graph $G$ then $C_i\setminus N_q\neq\emptyset$ for $i=1,2$.

\begin{proposition}
\label{prop:1-join-deg-012}
If $G$ is $1$-joinlike and $q$ is the exceptional vertex then $\deg(q,C_i)\geq 3$ for $i=1,2$.
\end{proposition}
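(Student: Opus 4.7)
The plan is to suppose $\deg(q,C_1)\le 2$ and derive a contradiction by scrutinizing the flag $S^2$-triangulation $\lk_G q$ (guaranteed by Definition~\ref{def:fascinating}(d)) together with the cycle constraint on edge-links from Definition~\ref{def:fascinating}(b). Throughout, the crucial observation is that $G[N_q\cap C_2]$ is a subgraph of the cycle $G[C_2]$, hence a union of paths (or the full cycle), which is very restrictive inside a triangulation of $S^2$. By symmetry, it suffices to rule out $\deg(q,C_1)\in\{0,1,2\}$.

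First I would handle the small cases quickly. If $\deg(q,C_1)=0$ then $\lk_G q\subseteq G[C_2]$ is triangle-free, contradicting that it is a flag triangulation of $S^2$ on at least $6$ vertices. If $\deg(q,C_1)=1$, say $N_q\cap C_1=\{u\}$, then every vertex of $N_q\cap C_2$ has at most three neighbours in $\lk_G q$ (one to $u$, two in the cycle $G[C_2]$); comparing the resulting degree-sum bound with the exact count $6|N_q|-12$ coming from $|E(\lk_G q)|=3|N_q|-6$ and using $|N_q|\ge 6$ yields a contradiction.

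The substantive case is $\deg(q,C_1)=2$ with $N_q\cap C_1=\{u_1,u_2\}$. The key tool here is that $\lk_G(u_iq)=G[N_{u_i}\cap N_q]$ must be a cycle by Definition~\ref{def:fascinating}(b), and that $G[C_1]$ being a cycle identifies $N_{u_i}\cap C_1$ exactly. If $u_1u_2\notin E$, then $N_{u_i}\cap N_q\subseteq C_2$, so being simultaneously a cycle and a subgraph of $G[C_2]$ it must equal $G[C_2]$ entirely; hence both $u_1$ and $u_2$ are adjacent to every vertex of $C_2$. Consequently $N_{u_1}=\{u_1^-,u_1^+,q\}\cup C_2$, and applying the same link-is-a-cycle argument to $u_1^-$ (and $u_1^+$) inside the $S^2$-triangulation $\lk_G u_1$ forces $u_1^\pm$ to also be adjacent to all of $C_2$. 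A final edge count in $\lk_G u_1$ via $|E(\lk_G u_1)|=3|N_{u_1}|-6$ then pins down $|C_2|=3$, contradicting $|C_2|\ge 4$.

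If instead $u_1u_2\in E$, then $qu_1u_2$ is a triangle, so by Definition~\ref{def:fascinating}(c) it has exactly two common neighbours $p,p'$, and these must lie in $C_2$. The cycle structure of $\lk_G(u_iq)$ then forces $U_i:=N_q\cap C_2\cap N_{u_i}$ to be a contiguous arc of $C_2$ with endpoints $p,p'$. Since a cycle admits only two arcs with given endpoints, $U_1$ and $U_2$ are either equal or complementary. A careful edge count in $\lk_G q$ using $|E(\lk_G q)|=3(2+|N_q\cap C_2|)-6$, together with the fact that $G[N_q\cap C_2]$ is either a path or the full cycle $G[C_2]$, rules out each possibility via $|C_2|\ge 4$. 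I expect this last case to be the main obstacle: the structure is just loose enough that one must carefully organize the interplay between $|U_1|$, $|U_2|$, the parameter $|N_q\cap C_2|$, and whether $N_q\cap C_2$ is all of $C_2$, rather than appeal to a soft min-degree bound. The symmetric statement $\deg(q,C_2)\ge 3$ follows by swapping $C_1\leftrightarrow C_2$.
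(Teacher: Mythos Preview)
Your approach can be made to work, but it is far more elaborate than necessary, and the sub-case you flag as ``the main obstacle'' is left genuinely incomplete. The paper dispatches all of $\deg(q,C_1)\in\{0,1,2\}$ in three lines: if $\deg(q,C_2)=0$ then $\lk_Gq$ has at most two vertices, contradicting Definition~\ref{def:fascinating}(d); otherwise pick $x\in N_q\cap C_2$ with a $C_2$-neighbour outside $N_q$ (such $x$ exists because $C_2\setminus N_q\neq\emptyset$, a fact recorded at the opening of Section~\ref{section:exact}), and then $\lk_Gqx$ has at most $\deg(q,C_1)+1\le 3$ vertices, contradicting Definition~\ref{def:fascinating}(b). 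The single ingredient you never invoke is exactly this observation $C_i\setminus N_q\neq\emptyset$, which follows from the joinlike hypothesis via the distinguished edge $e_i$ with $\lk_Ge_i=G[C_{3-i}]$.

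Two places where this shortcut collapses your argument. In the sub-case $u_1u_2\notin E$ you correctly deduce $\lk_G(u_1q)=G[C_2]$, but this already forces $C_2\subseteq N_q$ and hence an immediate contradiction; your subsequent analysis of $\lk_Gu_1$ is superfluous (and, as written, would require extra care when $|C_1|=4$). In the sub-case $u_1u_2\in E$, your assertion that one cannot ``appeal to a soft min-degree bound'' is mistaken: any $x\in N_q\cap C_2$ with a $C_2$-neighbour outside $N_q$ has degree at most $2+1=3$ in $\lk_Gq$, violating the minimum degree $4$ in a flag $S^2$. Your proposed alternative via an edge count also rests on the unjustified claim that $G[N_q\cap C_2]$ is a single path or the whole cycle, and you do not actually carry the count through.
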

\begin{proof}
Suppose that $\deg(q,C_1)\leq 2$. If $\deg(q,C_2)=0$ then $\lk_Gq$ contains at most $2$ vertices, so $G$ fails Definition~\ref{def:fascinating}d). Otherwise let $x\in N_q\cap C_2$ be any vertex with at least one neighbor in $C_2\setminus N_q$. We see that $\lk_Gqx$ contains at most $3$ vertices, which is a contradiction.
\end{proof}

\begin{proposition}
\label{prop:1-join-deg-3}
If $G$ is $1$-joinlike then $e(G)\leq \frac{1}{4}(n^2+2n+17)$, where $n=|V(G)|$.
\end{proposition}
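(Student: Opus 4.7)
My plan is to carry out an exact edge count and reduce the target to a single short arithmetic inequality. Writing $n_i=|C_i|$ (so $n=n_1+n_2+1$), $d_i=\deg(q,C_i)$, and $b=\bar e(G[C_1,C_2])$ for the number of missing edges between $C_1$ and $C_2$, the edges of $G$ split into $n_1+n_2$ cycle edges, $n_1n_2-b$ bipartite edges, and $d_1+d_2$ edges incident to $q$, giving
\[e(G)=n_1+n_2+n_1n_2-b+d_1+d_2.\]
The desired bound rearranges to
\[d_1+d_2-b\;\le\;\tfrac{(n_1-n_2)^2}{4}+5.\]
Proposition~\ref{prop:1-join-deg-012} gives $d_i\ge 3$, and $d_i<n_i$ holds because the defining edge $e_{3-i}\in G[C_{3-i}]$ has $\lk_G e_{3-i}=G[C_i]$ and so $q$ cannot be adjacent to both of its endpoints. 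Since each $\lk_G e_i$ is a flag triangulation of $S^1$, we also get $n_i\ge 4$.

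The decisive step is the planarity of $\lk_G q$. Let $p_i\ge 1$ be the number of arcs into which $N_q\cap C_i$ decomposes on the cycle $G[C_i]$; the $C_i$-part of $\lk_G q$ therefore contributes $d_i-p_i$ edges. Since $\lk_G q$ is a triangulated $2$-sphere on $d_1+d_2\ge 6$ vertices, it has $3(d_1+d_2)-6$ edges, leaving exactly $E_{\mathrm{bip}}=2(d_1+d_2)-6+p_1+p_2$ bipartite edges. This bipartite subgraph is itself planar (subgraph of a planar graph) and bipartite, so the classical inequality $|E|\le 2|V|-4$ for planar bipartite graphs on $\ge 3$ vertices forces $p_1+p_2\le 2$, i.e.\ $p_1=p_2=1$. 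Hence each $N_q\cap C_i$ is a single arc.

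Next I use the edge-link condition. For a cycle edge $ww'\in E(G[C_i])$ (well defined because $n_i\ge 4$), the link $\lk_G ww'$ is a cycle of length $\ge 4$ contained in $(N_w\cap N_{w'}\cap C_{3-i})\cup\{q\}$. A proper induced subgraph of the cycle $G[C_{3-i}]$ is a union of paths and contains no cycle, so either $q\in\lk_G ww'$ (forcing $w,w'\in N_q$), or $N_w\cap N_{w'}\supseteq C_{3-i}$ (forcing both $w,w'$ to be adjacent to all of $C_{3-i}$). It follows that any vertex $w\in C_i$ missing some edge to $C_{3-i}$ must itself lie in $N_q$ together with both its cycle-neighbors. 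Since $N_q\cap C_i$ is a single arc of $d_i$ vertices, such a $w$ is in the interior of that arc, so the set $M_i\subset C_i$ of vertices missing at least one edge to $C_{3-i}$ satisfies $|M_i|\le d_i-2$.

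The proof then finishes by combining both sides. All missing edges in $G[C_1,C_2]$ lie in $M_1\times M_2$, whence $b\le(d_1-2)(d_2-2)$. Conversely, every bipartite non-edge of $\lk_G q$ is a non-edge of $G[C_1,C_2]$, so $b\ge d_1d_2-E_{\mathrm{bip}}=(d_1-2)(d_2-2)$. Therefore $b=(d_1-2)(d_2-2)$, and
\[d_1+d_2-b\;=\;5-(d_1-3)(d_2-3)\;\le\;5\;\le\;\tfrac{(n_1-n_2)^2}{4}+5,\]
as required. The step I expect to be most delicate is the planarity-based reduction to $p_1=p_2=1$: without that uniform bound one would be dragged into a finicky case analysis ruling out configurations such as $p_1=1,p_2=2$ by ad hoc $K_{3,3}$-detection inside $\lk_G q$, whereas the bipartite edge-count inequality handles all these cases in one stroke.
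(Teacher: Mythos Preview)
Your proof is correct and reaches the same arithmetic endpoint as the paper, but the two key technical steps are handled differently.

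The paper establishes that each $N_q\cap C_i$ is a single arc by an ad hoc $K_{3,3}$ argument: three boundary vertices on one side together with three $q$-neighbours on the other would form a $K_{3,3}$ in $\lk_Gq$. It then shows directly, again via $K_{3,3}$-freeness, that any two \emph{interior} $q$-vertices on opposite sides are non-adjacent, which pins down $b=(a_1-2)(a_2-2)$ immediately. Your route is more quantitative: you use that $\lk_Gq$ is a triangulated $2$-sphere, so it has \emph{exactly} $3(d_1+d_2)-6$ edges, and then the bipartite-planar bound $|E|\le 2|V|-4$ on the cross part forces $p_1+p_2\le 2$ in one stroke. For the value of $b$ you squeeze it from both sides: the upper bound $b\le(d_1-2)(d_2-2)$ from the edge-link analysis (your version of the paper's Claim~H), and the lower bound from the exact bipartite edge count in $\lk_Gq$. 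This is arguably cleaner than the paper's second $K_{3,3}$ argument, and it exploits strictly more of condition~d) in Definition~\ref{def:fascinating} (the full triangulated-sphere structure, not just $K_{3,3}$-freeness).

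One cosmetic point: in your justification of $d_i<n_i$ the indices are swapped --- the endpoints of $e_{3-i}$ lie in $C_{3-i}$, so the argument as written gives $d_{3-i}<n_{3-i}$. The conclusion for both $i$ is of course correct, and you need it only to ensure the arc in $C_i$ is proper so that ``interior vertex'' makes sense.
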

\begin{proof}
Let $q$ be the exceptional vertex. We will say that a vertex $v\in C_i$ is a
\emph{$q$-vertex} if $qv\in E(G)$, a \emph{free vertex} otherwise and a
\emph{boundary vertex} if it is a $q$-vertex adjacent to a free vertex.

We refer to $C_1$ and $C_2$ as ``sides''.

\begin{claim}\label{claim:H}
If $v\in C_i$ is free or boundary then $C_{3-i}\subset N_v$.
\end{claim}
\begin{proof}
Indeed, $v$ belongs to an edge $e\in G[C_i]$ with $q\not\in\lk_Ge$ and therefore
with $\lk_Ge=G[C_{3-i}]$. That means $C_{3-i}\subset N_v$.
\end{proof}

By Proposition~\ref{prop:1-join-deg-012} and because $N_q\cap C_i\neq C_i$ for
$i=1,2$, there are at least three $q$-vertices and at least two boundary
vertices on each side. If there were $3$ boundary vertices in, say, $C_1$, then
the graph formed by those $3$ vertices in $C_1$ and any $3$ neighbors of $q$ in
$C_2$ would form, by Claim~\ref{claim:H}, a $K_{3,3}$ in $\lk_Gq$, which is
impossible. That implies there are exactly $2$ boundary vertices on each side.
In other words each $N_q\cap C_i$ induces a path inside $C_i$ of some length
$a_i\geq 3$ for $i=1,2$.

If $u\in C_1$ and $w\in C_2$ are $q$-vertices which are not boundary and $uw\in
E(G)$ then by Claim~\ref{claim:H} there is a $K_{3,3}$ in $\lk_Gq$ formed by
$u$, $w$ and the $2$ boundary vertices on each side. This means $uw\not\in E(G)$ for such $u,w$.

We now know the exact structure of $G$ and we can compute its number of edges. Denoting $c_i=|C_i|$ and using $n=c_1+c_2+1$ we have
\begin{align*}
e(G)&=c_1c_2+c_1+c_2+a_1+a_2-(a_1-2)(a_2-2)\\
&=\frac{1}{4}(n^2+2n+17)-\frac{1}{4}(c_1-c_2)^2-(a_1-3)(a_2-3)\leq \frac{1}{4}(n^2+2n+17).
\end{align*}
\end{proof}

\bigskip
The second part of the analysis in this section deals with 2-joinlike graphs. We
start off by a counterpart of Proposition~\ref{prop:1-join-deg-012}.
\begin{proposition}
\label{prop:2-join-deg-01}
If $G$ is $2$-joinlike and $q$ is any exceptional vertex then $\deg(q,C_i)\geq 2$ for $i=1,2$.
\end{proposition}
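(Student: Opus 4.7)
The plan is to argue by contradiction, reducing to $\deg(q,C_1)\ge 2$ by symmetry. Assume $\deg(q,C_1)\le 1$; the idea is to combine Definition~\ref{def:fascinating}(b) and (d) applied to $q$ and to edges $qv$ with $v\in C_2$, with the hypothesis from Definition~\ref{def:joinlike} on the distinguished edge $e_2\in G[C_2]$ satisfying $\lk_G e_2=G[C_1]$.

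The first step is bookkeeping on $A:=N_q\cap C_2$. Since $|\lk_G q|\ge 6$ by Definition~\ref{def:fascinating}(d) and $|N_q\cap C_1|\le 1$, one has $|A|\ge 4$. For any $v\in A$, the common neighbours of $q$ and $v$ can come only from the vertex $q'$, from the (at most one) neighbour $u$ of $q$ in $C_1$, and from the two cycle-neighbours $v',v''$ of $v$ inside $C_2$. This gives $|N_q\cap N_v|\le 4$, dropping to $\le 3$ when $\deg(q,C_1)=0$. Since Definition~\ref{def:fascinating}(b) forces $\lk_G qv$ to be a cycle of length at least $4$, the case $\deg(q,C_1)=0$ is immediately dispatched.

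In the remaining case $\deg(q,C_1)=1$, with $u$ the unique neighbour, the cycle-length-$\ge 4$ condition forces equality $|N_q\cap N_v|=4$ for every $v\in A$, hence in particular $v',v''\in N_q$. So $A$ is closed under cycle-adjacency in $C_2$, and since $A\ne\emptyset$ and $C_2$ is a single cycle, $A=C_2$, i.e.\ $N_q\supseteq C_2$. Now plug in the distinguished edge $e_2=\{v_1,v_2\}$ of Definition~\ref{def:joinlike}: both $v_1,v_2\in C_2\subseteq N_q$, so $q\in N_{v_1}\cap N_{v_2}$, while Definition~\ref{def:joinlike} guarantees $N_{v_1}\cap N_{v_2}=C_1\not\ni q$. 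Contradiction.

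The only delicate step is the propagation ``tightness at a single $v\in A$ forces $A=C_2$'': the four-element census of $N_q\cap N_v$, combined with the edge-link cycle condition, feeds back on itself and absorbs all of $C_2$. Everything else is routine enumeration.
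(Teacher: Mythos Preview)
Your proof is correct. The core mechanism matches the paper's: for $v\in N_q\cap C_2$ one has $N_q\cap N_v\subseteq\{u,v',v'',q'\}$, whence $|\lk_G qv|\le 4$, dropping to $\le 3$ when $\deg(q,C_1)=0$. The only organisational difference is in the case $\deg(q,C_1)=1$. The paper invokes the observation $C_2\setminus N_q\neq\emptyset$ (recorded at the opening of Section~\ref{section:exact}, itself a one-line consequence of $\lk_G e_2=G[C_1]$) to pick directly some $x\in N_q\cap C_2$ with a cycle-neighbour outside $N_q$, giving $|\lk_G qx|\le 3$. You instead argue that if every $v\in A$ had both cycle-neighbours in $A$ then $A=C_2$, and only then appeal to $e_2$ to reach the contradiction. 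Your propagation step is exactly the contrapositive of the paper's direct choice of $x$, so the two arguments are essentially the same, yours just folding the preliminary observation into the body of the proof.
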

\begin{proof}
Suppose that $\deg(q,C_1)\leq 1$. If $\deg(q,C_2)=0$ then $\lk_Gq$ contains at most $2$ vertices, so $G$ fails Definition~\ref{def:fascinating}d). Otherwise let $x\in N_q\cap C_2$ be any vertex with at least one neighbor in $C_2\setminus N_q$. We see that $\lk_Gqx$ contains at most $3$ vertices, which is a contradiction.
\end{proof}

We shall later need the following simple inequality.
\begin{lemma}
\label{stupid-inequality}
If $n=k+l+2$ then
$$kl+2k+l+6\leq \frac{1}{4}(n^2+2n+17).$$
\end{lemma}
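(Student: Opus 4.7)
The plan is to substitute $n = k+l+2$ directly into the right-hand side and reduce the desired inequality to the nonnegativity of a perfect square. First I would expand
$$n^2 + 2n + 17 = (k+l+2)^2 + 2(k+l+2) + 17 = k^2 + 2kl + l^2 + 6k + 6l + 25.$$
Multiplying the claimed inequality $kl + 2k + l + 6 \le \tfrac14(n^2+2n+17)$ through by $4$ and bringing everything to one side then produces the equivalent statement
$$k^2 - 2kl + l^2 - 2k + 2l + 1 \ge 0.$$

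I would finish by observing that the left-hand side is exactly $(k-l-1)^2$, which is of course nonnegative, with equality iff $k = l+1$. There is no real obstacle here: the whole content of the lemma is spotting the completion of the square, and the verification is a routine expansion once both sides are written out in the variables $k$ and $l$.
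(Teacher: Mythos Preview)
Your proof is correct and is essentially identical to the paper's: the paper simply records the identity $kl+2k+l+6=\tfrac14(n^2+2n+17)-\tfrac14(l-k+1)^2$, which is your computation before multiplying through by $4$ (note $(k-l-1)^2=(l-k+1)^2$).
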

\begin{proof}
One checks that
$$kl+2k+l+6=\frac{1}{4}(n^2+2n+17)-\frac{1}{4}(l-k+1)^2.$$
\end{proof}

Proposition~\ref{prop:2-join-all} below is a combination of a case distinction
captured by Proposition~\ref{prop:2-join-deg-2-adjacent} and
Proposition~\ref{prop:2-join-deg-2-non-adjacent}.
\begin{proposition}
\label{prop:2-join-deg-2-adjacent}
If $G$ is $2$-joinlike with exceptional vertices $\{q,q'\}$ such that
$\deg(q,C_1)=2$ and the two vertices of $N_q\cap C_1$ are adjacent, then $e(G)\leq \frac{1}{4}(n^2+2n+17)$, where $n=|V(G)|$.
\end{proposition}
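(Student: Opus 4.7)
The plan is to mirror the structural analysis from the proof of Proposition~\ref{prop:1-join-deg-3}: I will carefully determine the local structure around $q$ and $q'$ using the link conditions in Definition~\ref{def:fascinating}, and then evaluate $e(G)$ exactly, applying Lemma~\ref{stupid-inequality}. Throughout, write $a=\deg(q,C_2)$, $A=N_q\cap C_2$, $\epsilon=[qq'\in E(G)]$, $c_i=|C_i|$ and $n=c_1+c_2+2$.

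My first step is to analyze the triangulated $2$-sphere $\lk_G q$. Euler's formula gives $e(\lk_G q)=3(a+\epsilon)$, and the edges of $\lk_G q$ decompose into the edge $u_1u_2$, the edges of the cycle $G[C_2]$ whose endpoints both lie in $A$, the edges from $\{u_1,u_2\}$ to $A$, and (when $\epsilon=1$) the edges from $q'$ into $A\cup\{u_1,u_2\}$. Combining this count with the condition that the link of the edge $u_1u_2$ itself is a cycle of length at least $4$ (Definition~\ref{def:fascinating}b)), in which $q$ must have degree exactly $2$, will force $|A\cap N_{u_1}\cap N_{u_2}|\in\{1,2\}$ and thereby severely restrict the admissible configurations. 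In particular, a short case analysis will rule out $\epsilon=0$ and will show that $A$ consists of at most two arcs of $C_2$, with the adjacencies of $u_1$ and $u_2$ into $C_2$ determined up to only a few parameters.

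Next, I will classify every edge $e=v_iv_{i+1}\in G[C_1]\setminus\{u_1u_2\}$. Since $N_q\cap C_1=\{u_1,u_2\}$, we have $q\notin\lk_G e$, so $\lk_G e\subseteq C_2\cup\{q'\}$ is a cycle of length $\geq 4$. The only sub-cycle of the cycle $G[C_2]$ is $G[C_2]$ itself, so either $\lk_G e=G[C_2]$ (a ``join edge'', forcing both endpoints of $e$ to be \emph{full}, i.e., adjacent to every vertex of $C_2$) or $q'\in\lk_G e$ (forcing both endpoints of $e$ to lie in $N_{q'}\cap C_1$). Since the second type is confined to consecutive pairs inside $N_{q'}\cap C_1$, at most $\deg(q',C_1)$ such edges occur, and consequently all but $O(\deg(q',C_1))$ vertices of $C_1$ are full. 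A symmetric analysis of $G[C_2]$, whose edges may have either $q$ or $q'$ in their link, will bound the number of non-full vertices of $C_2$ in terms of $a+\deg(q',C_2)$.

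With the structure in hand, I will then write down
\[
e(G)=c_1+c_2+e(G[C_1,C_2])+2+a+\deg(q',C_1)+\deg(q',C_2)+\epsilon,
\]
bound $e(G[C_1,C_2])$ from above by subtracting the missing bipartite edges (notably those between $\{u_1,u_2\}$ and $C_2\setminus(N_{u_1}\cap N_{u_2})$ forced by the structure of $\lk_G(u_1u_2)$, and those between non-boundary $q'$-vertices forced by the no-$K_{3,3}$ condition in $\lk_G q'$), and verify that the resulting estimate simplifies to the form $e(G)\leq c_1c_2+2c_1+c_2+6$ (after possibly swapping the roles of $c_1$ and $c_2$). Applying Lemma~\ref{stupid-inequality} with $k=c_1$ and $l=c_2$ then delivers $e(G)\leq \frac{1}{4}(n^2+2n+17)$.

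The main obstacle I anticipate is the case split induced by the admissible configurations of $A$: either $A$ is a proper union of one or two arcs of $C_2$ with $u_1,u_2$ sharing only one or two common neighbors inside $A$, or $A=C_2$ with more elaborate interactions involving $q'$. Each of these sub-cases requires a separate verification to ensure that the missing bipartite edges account for all potential excess, in spirit resembling the terminal calculation in the proof of Proposition~\ref{prop:1-join-deg-3} but somewhat more involved due to the presence of two exceptional vertices whose constraints couple.
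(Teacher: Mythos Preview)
Your overall strategy---pin down the local structure via link conditions and then count edges against Lemma~\ref{stupid-inequality}---is the same as the paper's, but the proposal is too schematic at the decisive points and several of your announced intermediate claims are either wrong or unjustified.

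First, a small but telling slip: the case ``$A=C_2$'' that you anticipate cannot occur, since (as noted at the start of Section~\ref{section:exact}) $C_i\setminus N_q\neq\emptyset$ for every exceptional vertex. More seriously, your plan does not uncover the key asymmetry between the two vertices $u,v$ of $N_q\cap C_1$. The paper does not proceed via an Euler count on $\lk_Gq$; instead it picks a boundary vertex $x\in N_q\cap C_2$, reads off from the $4$-cycle $\lk_G(qx)$ that $qq'\in E(G)$, $q'v\in E(G)$ but $q'u\notin E(G)$, then shows $C_2\subset N_u$ (via $\lk_G(uu')$) and deduces that $G[N_q\cap C_2]$ is a \emph{single} path whose interior misses $N_v$. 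Your proposal leaves open the possibility that $A$ is two arcs and treats $u_1,u_2$ symmetrically, so it is not clear your ``short case analysis'' would ever reach this picture.

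The most serious gap is in the final edge count. The paper's proof bifurcates on whether $t$ (the other $C_1$-neighbour of $v$) meets the interior of $N_q\cap C_2$. In the second branch the graph $G$ is determined \emph{exactly} and attains $e(G)=\tfrac14(n^2+2n+17)$ for suitable parameters (this is precisely the extremal family of Figure~\ref{rysunek-equal}b). Your plan to ``bound $e(G[C_1,C_2])$ by subtracting missing bipartite edges'' and reduce everything to $e(G)\le c_1c_2+2c_1+c_2+6$ gives no indication of which missing edges you are counting in this tight case, nor why the inequality survives with equality. Until you can exhibit, in that case, a set of at least $(a-2)$ missing cross-edges disjoint from the ones already accounted for near $u,v$, the argument is incomplete.
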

\begin{proof}
Let $N_q\cap C_1=\{u,v\}$. Let $x,x'\in C_2$ be neighbors such that $qx\in E(G)$, $qx'\not\in E(G)$ and let $y$ be the other neighbor of $x$ in $C_2$ (their existence is guaranteed by Proposition~\ref{prop:2-join-deg-01} and the fact that $N_q\cap C_2\neq C_2$). Then $V(\lk_G qx)\subset \{u,v,q',y\}$, and since $uv\in E(G)$ we can assume that $\lk_G qx$ is the cycle $vuyq'$ (this is the unique possibility up to the order of $u, v$). In particular $qq', q'v\in E(G)$ and $q'u\not\in E(G)$. 

If $u'\neq v$ is the other neighbor of $u$ in $C_1$ then $\lk_G uu'$ contains neither $q$ nor $q'$, so it must be all of $C_2$. In particular $C_2\subset N_u$. It means that $\lk_G uq=G[\{v\}\cup (N_q\cap C_2)]$, so $G[N_q\cap C_2]$ is a path of length at least $3$ within $C_2$, whose both endpoints, call them $v_1$, $v_2$, are connected to $v$, while the interior vertices of the path are not connected to $v$. (In fact $x$ from the previous paragraph is one of the $v_i$). Let $a=|N_q\cap C_2|$ be the length of this path.

The link of every edge in $G[N_q\cap C_2]$ contains $u$ and $q$, so to be a cycle it must also contain $q'$. It follows that $N_{q'}\cap C_2\supseteq N_q\cap C_2$.

Let $t\neq u$ be the other neighbor of $v$ in $C_1$. 
We now focus on the link $\lk_Gq'v$. It contains the path $v_1qv_2$. As we shall see, the case $t\not\in \lk_Gq'v$ will lead to a contradiction.
\begin{claim}\label{claim:addedFORREFEREE}
If $t\not\in \lk_Gq'v$ then $\lk_Gq'v$
must contain, apart from $v_1, q$ and $v_2$, all the vertices in
$C_2\setminus N_q$.
\end{claim}
\begin{proof}
The link $\lk_Gq'v$ is a cycle which passes through $v_1qv_2$. The only possible route for this cycle which does not take it outside $\lk_G v$ and avoids $t$ and $u$ is to continue from $v_2$ back to $v_1$ in $C_2$, i.e., follow the path $G[C_2\setminus N_q]$.
\end{proof}
However, the above would imply $C_2\setminus N_q\subset N_{q'}$.
Put together with the previously established $N_{q'}\cap C_2\supseteq N_q\cap C_2$ we
would get $C_2\subset N_{q'}$, a contradiction. This means that
$t\in\lk_Gq'v$, i.e. $q't\in E(G)$.

Consider any vertex $x\in (C_1\cap N_{q'})\setminus \{v\}$ which has at least
one neighbor $\tilde x$ in $C_1\setminus N_{q'}$. By the fact that $q'u\not\in E(G)$ such a
vertex must exist. The link $\lk_G x\tilde x$ is a cycle which does not touch
$C_1\cup\{q,q'\}$. Consequently, $\lk_G x\tilde x=G[C_2]$, and in particular,
$C_2\subset N_x$. The link $\lk_G xq'$ consists of one vertex in $C_1$ and of
the whole $N_{q'}\cap C_2$. We get that $G[N_{q'}\cap C_2]$ is a path within $C_2$,
containing $N_q\cap C_2$. Let $w_1,w_2$ be the endpoints and let $b=|N_{q'}\cap
C_2|$. Assume that $v_1$ is between $w_1$ and $v_2$ on this path (possibly
$w_1=v_1$ or $w_2=v_2$).

For every edge $e$ in $G[(C_2\setminus N_{q'})\cup\{w_1,w_2\}]$ we have
$\lk_Ge=G[C_1]$. As $C_2\cap N_{q'}$ induces a path with endvertices $w_1$ and
$w_2$ and $G[C_2]$ is a cycle, we must have that $G[(C_2\setminus
N_{q'})\cup\{w_1,w_2\}]$ is a path, in particular this graph contains no isolated vertices.
It follows that for every vertex $x\in (C_2\setminus
N_{q'})\cup\{w_1,w_2\}$ we have $C_1\subset N_x$. Now consider the link
$\lk_Gq'v$. It contains the vertices $q, t, v_1, v_2, w_1, w_2$, with paths
$v_1qv_2$ and $w_1tw_2$. This is only possible if $v$ is adjacent to all of
$(N_{q'}\setminus N_q)\cap C_2$ while $t$ is not adjacent to
any vertex of $(((N_{q'}\setminus N_q)\cap C_2)\cup\{v_1,v_2\})\setminus
\{w_1,w_2\}$.

Let $|C_1|=k, |C_2|=l$, with $n=k+l+2$. The remaining part of the proof splits into two cases. First we assume that $t$ is non-adjacent to all of $(N_q\cap C_2)\setminus \{v_1,v_2\}$. In that case $t$ is non-adjacent to $b-2$ vertices of $C_2$, $v$ is non-adjacent to $a-2$ vertices and using a bound $\deg(q',C_1)\leq k-1$ we get
\begin{align*}
e(G)&\leq  kl+k+l+(a+2)+(b+k-1)+1-(a-2)-(b-2)\\
&= kl+2k+l+6\;,
\end{align*}
so the conclusion follows from Lemma~\ref{stupid-inequality}.

Next suppose that $t$ has a neighbor $y$ in $(N_q\cap C_2)\setminus \{v_1,v_2\}$ and let $s\neq v$ be the other neighbor of $t$ in $C_1$. The link $\lk_Gq't$ contains $v,w_1,w_2,y$ and possibly $s$ with edges $w_1vw_2$, and apart from $v$ and $s$ it is contained in $N_{q'}\cap C_2$. Any cycle with that property must contain an edge $e\in G[N_q\cap C_2]$ and it follows that there exists an edge $e\in G[N_t\cap N_q\cap C_2]$. But $\lk_Ge$ is a cycle passing through $uqq't$ and not through $v$, therefore necessarily going through all of $C_1\setminus\{v\}$. In particular $N_{q'}\cap C_1=\{v,t\}$ and so $s\not\in\lk_G q't$. It means that $\lk_Gq't=G[\{v\}\cup(N_{q'}\cap C_2)]$ which, by the restrictions on $N_t$, implies $v_1=w_1$, $v_2=w_2$, $a=b$ and $C_2\subset N_t$. This determines the graph $G$ and we obtain
\begin{align*}
e(G)&=  kl+k+l+(a+2)+(a+2)+1-(a-2)\\
&= kl+k+l+a+7\\
&= \frac{1}{4}(n^2+2n+17)-\frac{1}{4}(k-l+1)^2 -(l-1-a) \leq \frac{1}{4}(n^2+2n+17)
\end{align*}
because $a\leq l-1$.
\end{proof}

\begin{proposition}
\label{prop:2-join-deg-2-non-adjacent}
If $G$ is $2$-joinlike with exceptional vertices $\{q,q'\}$ such that $\deg(q,C_1)=2$ and the two vertices of $N_q\cap C_1$ are not adjacent, then $e(G)\leq \frac{1}{4}(n^2+2n+17)$ where $n=|V(G)|$.
\end{proposition}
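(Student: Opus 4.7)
My plan is to follow the template of the proof of Proposition~\ref{prop:2-join-deg-2-adjacent}, with modifications forced by the non-adjacency of the two vertices $u, v \in N_q \cap C_1$. Write $N_q \cap C_1 = \{u, v\}$; since $uv \notin E(G)$, these two vertices split the cycle $G[C_1]$ into two arcs, each of length at least one. Set $k = |C_1|$ and $l = |C_2|$, so $n = k + l + 2$, and recall that by Proposition~\ref{prop:2-join-deg-01} we have $\deg(q, C_2) \geq 2$ and $N_q \cap C_2 \neq C_2$.

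First I would analyze the links of edges in $G[C_1]$ that avoid both $u$ and $v$. For any such edge $e = xy$ we have $q \notin \lk_G e$, so the cycle $\lk_G e$ of length at least four lies inside $C_2 \cup \{q'\}$; whenever it avoids $q'$, it must coincide with $G[C_2]$, forcing $C_2 \subseteq N_x \cap N_y$. The same consideration applies to the edges $uu^\pm$ and $vv^\pm$, where $u^\pm, v^\pm$ are the cycle-neighbours of $u, v$ in $C_1$ (and are all distinct from $v, u$). Next I would exploit that $\lk_G q$ is a planar triangulation on at least six vertices containing the non-adjacent pair $u, v$: in any such triangulation the common neighbours of a non-adjacent pair contain a separating cycle, so $|N_q \cap N_u \cap N_v| \geq 3$.

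The next step is to pin down $\lk_G q$ precisely. For each $x \in N_q \cap C_2$, the link $\lk_G(qx)$ is a cycle of length at least four contained in $\{u, v\} \cup (N_x \cap (C_2 \cup \{q'\}))$. Since $x$ has only two neighbours in $C_2$, this gives sharp constraints: $x$ must be adjacent to at least one of $u, v$, and the cycle must close up through $\{x^+, x^-\}$ and possibly $q'$. Iterating this across $x \in N_q \cap C_2$, and combining with the planar triangulation structure of $\lk_G q$, forces $G[N_q \cap C_2]$ to be (generically) a single path inside $C_2$ whose vertices are all adjacent to both $u$ and $v$, with $q'$ confined to one of a few specific positions. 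A parallel analysis of $\lk_G q'$ then determines most of $N_{q'}$, with a case split on whether $qq' \in E(G)$.

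Finally, I would count edges. The systematic sources of missing edges from $G[C_1, C_2]$ are: (i) each vertex of $C_2 \setminus N_q$ contributes missing edges to $u$ and/or $v$, both if it is adjacent to neither (which holds generically by the structure derived above); and (ii) the arcs of $C_1 \setminus \{u, v\}$ not covered by $N_{q'}$ lose further edges to specific parts of $C_2$ dictated by $\lk_G q'$. Combining these with the bounds $\deg(q, C_2) \leq l - 1$ and the analogous bounds for $q'$, a careful accounting should produce an inequality of the form $e(G) \leq kl + 2k + l + 6$ (or its symmetric analogue), whereupon Lemma~\ref{stupid-inequality} gives the desired bound $e(G) \leq \frac{1}{4}(n^2 + 2n + 17)$. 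I expect the main obstacle to be running the case analysis cleanly: the dichotomy $qq' \in E(G)$ versus $qq' \notin E(G)$, together with the possible positions of $N_{q'}$ in the two arcs of $C_1$, will create several sub-cases, and because some configurations come very close to saturating the bound (analogous to the extremal examples of Remark~\ref{remark:funnygraphs}), the estimates must be tight enough to handle those boundary cases while remaining valid elsewhere.
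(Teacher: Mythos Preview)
Your overall strategy---pin down $\lk_Gq$, deduce that $G[N_q\cap C_2]$ is a path with all vertices adjacent to both $u$ and $v$, then count missing edges to reach $kl+2k+l+6$ and apply Lemma~\ref{stupid-inequality}---matches the paper's. But several of your structural guesses are off, and one of them breaks the edge count.

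First, the case split on $qq'\in E(G)$ is not needed: analyzing $\lk_G(qx)$ for a boundary vertex $x$ of $N_q\cap C_2$ (one with a $C_2$-neighbour outside $N_q$) forces the $4$-cycle $uq'vy$, so $qq'\in E(G)$ always. More importantly, your proposed source~(i) of missing edges is wrong. In the paper's argument one shows (via $\lk_Gqu$ and $\lk_Gqv$, and in one sub-case via a ``usual'' free-edge argument) that $u$ and $v$ are adjacent to \emph{all} of $N_q\cap C_2$, and in fact often to all of $C_2$; so vertices of $C_2\setminus N_q$ do \emph{not} systematically miss edges to $u$ or $v$. The missing edges you need come from a different place.

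The key step you have not identified is a propagation claim: for $x\in C_1\setminus\{u,v\}$ and $y$ an \emph{interior} vertex of the path $G[N_q\cap C_2]$, the link $\lk_G(xy)$ avoids both $q$ and $q'$, so if $xy\in E(G)$ then $x$ is adjacent to both $C_2$-neighbours of $y$ and $y$ to both $C_1$-neighbours of $x$. Iterating, each of the two arcs $P_1,P_2$ of $C_1\setminus\{u,v\}$ is either completely joined to the interior of $N_q\cap C_2$ or completely disjoint from it; and one checks that exactly one of the two alternatives holds for each arc. The arc $P_2$ that is disjoint supplies $|P_2|\cdot(a-2)$ missing edges in $G[C_1,C_2]$, and these (together with $\overline{P_2}\subset N_{q'}$ and a further case split on whether $N_{q'}\cap P_1=\emptyset$) are what drive the count down to $kl+2k+l+6$. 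Without this propagation step your edge accounting will not close.
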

\begin{proof}
The proof uses similar techniques as the proof of Proposition~\ref{prop:2-join-deg-2-adjacent}. Set $N_q\cap C_1=\{u,v\}$. 

Let $x\in C_2$ be any vertex with $qx\in E(G)$ and such that $x$ has a neighbor $x'\in C_2$ with $qx'\not\in E(G)$. Let $y$ be the other neighbor of $x$ in $C_2$. We have $V(\lk_Gqx)\subset \{u,v,q',y\}$, with $u$ and $v$ being independent. It follows that $\lk_Gqx$ is the cycle $uq'vy$, in particular $q'u,q'v,ux,vx,q'x\in E(G)$ and $qq'\in E(G)$.

It follows that the number of vertices $x\in C_2$ with the property described in the previous paragraph is at most $2$. Indeed, we proved that every such vertex is adjacent to $u,v,q'$, and the claim follows since $\lk_Gq$ is $K_{3,3}$-free. It means that $G[N_q\cap C_2]$ is a path within $C_2$ of length $a=|N_q\cap C_2|$. Moreover, if $v_1,v_2\in C_2$ are the endpoints of that path then $q'v_j,uv_j,vv_j\in E(G)$ for $j=1,2$. It follows that $a\geq 3$ as otherwise $\lk_Gqu$ would contain a triangle $q'v_1v_2$.

The link $\lk_Gqu$ contains $q',v_1,v_2$ and no vertex in $C_1$, so it must be $G[\{q'\}\cup(N_q\cap C_2)]$. That, and the same argument for $\lk_Gqv$ mean that $N_q\cap C_2\subset N_u, N_v$ and that $q'$ is non-adjacent to vertices in $(N_q\cap C_2)\setminus\{v_1,v_2\}$.

We will now prove the following claim.
\begin{claim}\label{claim:M}
Suppose $x\in C_1\setminus\{u,v\}$ and $y\in (N_q\cap C_2)\setminus\{v_1,v_2\}$. Let $x',x''$ be the neighbors of $x$ in $C_1$, and let $y',y''$ be the neighbors of $y$ in $C_2$. If $xy\in E(G)$ then $xy',xy'',x'y,x''y\in E(G)$.
\end{claim}
\begin{proof}
The link $\lk_Gxy$ contains neither $q$ nor $q'$. Hence it must be contained in $\{x',x'',y',y''\}$, and it follows that these $4$ vertices must form a 4-cycle with $x$ and $y$ adjacent to all of them.
\end{proof}

The vertices $u,v$ divide $G[C_1]$ into two paths which we call $P_1, P_2$, so that there is a partition $C_1=P_1\sqcup P_2\sqcup \{u,v\}$. We also write $\overline{P_j}=P_j\cup\{u,v\}$ for $j=1,2$ for the ``closures'' of those paths. Claim~\ref{claim:M} implies that for $j=1,2$ the bipartite graph $G[P_j,(N_q\cap C_2)\setminus \{v_1,v_2\}]$ is either edgeless or complete bipartite. 
Suppose first that both of these graphs are complete. Take any edge $e$ in $G[N_q\cap C_2]$. As $a\ge 3$, such an edge exists. The above then gives that $\lk_G e$ contains all of $C_1$, and $q$, a contradiction.
Suppose next that both of these graphs are empty. Taking any edge $e$ in $G[N_q\cap C_2]$ we observe that $\lk_G e$ spans at most three vertices $\{q,u,v\}$, again a contradiction.
We can therefore assume that $G[\overline{P_1},N_q\cap C_2]$ is complete bipartite and $G[P_2,(N_q\cap C_2)\setminus \{v_1,v_2\}]$ has no edges.

For every edge $f\in G[\overline{P_2}]$ the link $\lk_Gf$ misses $q$ and $N_q\setminus\{v_1,v_2\}$ hence it must contain $q'$. We therefore have that 
\begin{equation}\label{eq:un}
\overline{P_2}\subset N_{q'}\;. 
\end{equation}

The rest of the proof depends on whether $N_{q'}\cap P_1$ is empty. 

First suppose that $q'$ is adjacent to some vertex of $P_1$. Recalling that $N_{q'}\cap C_1\neq C_1$ and combining this with~\eqref{eq:un} we have $N_{q'}\cap P_1\neq P_1$. We can find $t\in P_1$ with neighbors $t',t''\in \overline{P_1}$ such that $tq'\in E(G)$ and $t'q'\not\in E(G)$. Since $\lk_Gtt'$ contains neither $q$ nor $q'$ it must be all of $C_2$ hence $C_2\subset N_t$. We then have $\lk_Gq't=G[\{t''\}\cup(N_{q'}\cap C_2)]$, so $N_{q'}\cap C_2$ induces a path within $C_2$ and $t''$ is not adjacent to its internal vertices. Since $v_1,v_2\in N_{q'}\cap C_2$ we obtain that $N_{q'}\cap C_2=(C_2\setminus N_{q})\cup\{v_1,v_2\}$. 

Let $|C_1|=k, |C_2|=l$. Subtracting the edges we lose from $P_2$ to $(N_q\cap C_2)\setminus\{v_1,v_2\}$ and from $t''\in \overline{P_1}$ to $C_2\setminus N_q$ and using $\deg(q',C_1)\leq k-1$, $|P_2|\geq 1$ and $a\geq 3$ we get
\begin{align*}
e(G)&\leq kl+k+l+(a+2)+(l-a+2+k-1)+1-|P_2|(a-2)-(l-a)\\
&\leq kl+2k+l+6.
\end{align*}

Next consider the case $N_{q'}\cap P_1=\emptyset$. By the usual argument we have $C_2\subset N_u,N_v$. Let $s\in P_2$ be the neighbor of $v$. Then $\lk_Gq'v=G[\{s,q\}\cup(N_{q'}\cap C_2)]$ and it contains the edges $v_1qv_2$. It follows that there are vertices $w_1,w_2\in C_2$ such that $G[N_{q'}\cap C_2]$ has two parts, stretching from $v_1$ to $w_1$ and from $v_2$ to $w_2$ (possibly $w_1=v_1$ or $w_2=v_2$). Moreover, looking at $\lk_Gq'v$ we see that $sw_1,sw_2\in E(G)$ but $s$ is not adjacent to the vertices in $(N_{q'}\cap C_2)\setminus\{w_1,w_2\}$.

Let $b=|N_{q'}\cap C_2|$. Counting the missing edges from $P_2$ to $(N_q\cap C_2)\setminus\{v_1,v_2\}$ and the disjoint set of missing edges from $s$ to $(N_{q'}\cap C_2)\setminus\{w_1,w_2\}$ we have:
\begin{align*}
e(G)&\leq kl+k+l+(a+2)+(b+k-1)+1-|P_2|(a-2)-(b-2)\\
&\leq kl+2k+l+6.
\end{align*}

An application of Lemma~\ref{stupid-inequality} completes the proof.
\end{proof}

Putting the above results (Propositions~\ref{prop:2-join-deg-01}, \ref{prop:2-join-deg-2-adjacent} and \ref{prop:2-join-deg-2-non-adjacent}) together we get the main result of this section concerning 2-joinlike graphs.
\begin{proposition}
\label{prop:2-join-all}
If $G$ is $2$-joinlike with exceptional vertices $\{q,q'\}$ and $\deg(q,C_1)\leq 2$ then $e(G)\leq \frac{1}{4}(n^2+2n+17)$ where $n=|V(G)|$.
\end{proposition}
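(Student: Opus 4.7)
The plan is simply to combine the three preceding propositions of this section, which together cover every possible scenario once the degree of $q$ into $C_1$ is at most $2$. First I would apply Proposition~\ref{prop:2-join-deg-01} to the exceptional vertex $q$ to obtain $\deg(q,C_1)\geq 2$; combined with the standing hypothesis $\deg(q,C_1)\leq 2$, this forces the equality $\deg(q,C_1)=2$. I can therefore write $N_q\cap C_1=\{u,v\}$.

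The argument then splits into two cases according to the position of $u$ and $v$ in the cycle $G[C_1]$. In the first case, $uv\in E(G)$, the hypothesis of Proposition~\ref{prop:2-join-deg-2-adjacent} is met verbatim, and that proposition immediately yields the desired bound $e(G)\leq\frac{1}{4}(n^2+2n+17)$. In the second case, $uv\notin E(G)$, the hypothesis of Proposition~\ref{prop:2-join-deg-2-non-adjacent} is met, and it delivers the same bound. Since the two cases are exhaustive once the degree has been pinned down, the proof is finished.

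There is essentially no obstacle to overcome at this level: all of the combinatorial work has already been done in the preceding propositions, which each involved a delicate forced-structure analysis of the neighborhoods of $q$ and $q'$, repeated use of the $K_{3,3}$-freeness of planar links (Definition~\ref{def:fascinating}d)), and a final appeal to Lemma~\ref{stupid-inequality}. The present statement is merely the packaging step that merges the ruled-out low degrees ($0$ and $1$) from Proposition~\ref{prop:2-join-deg-01} with the two subcases of degree exactly $2$ handled by Propositions~\ref{prop:2-join-deg-2-adjacent} and~\ref{prop:2-join-deg-2-non-adjacent}.
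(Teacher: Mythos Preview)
Your proposal is correct and matches the paper's approach exactly: the paper states only that the result follows by ``putting together'' Propositions~\ref{prop:2-join-deg-01}, \ref{prop:2-join-deg-2-adjacent} and~\ref{prop:2-join-deg-2-non-adjacent}, which is precisely the case split you spell out.
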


\section{Closing remarks}
\begin{center}
\begin{figure}
\begin{tabular}{cc}
\includegraphics[scale=0.3]{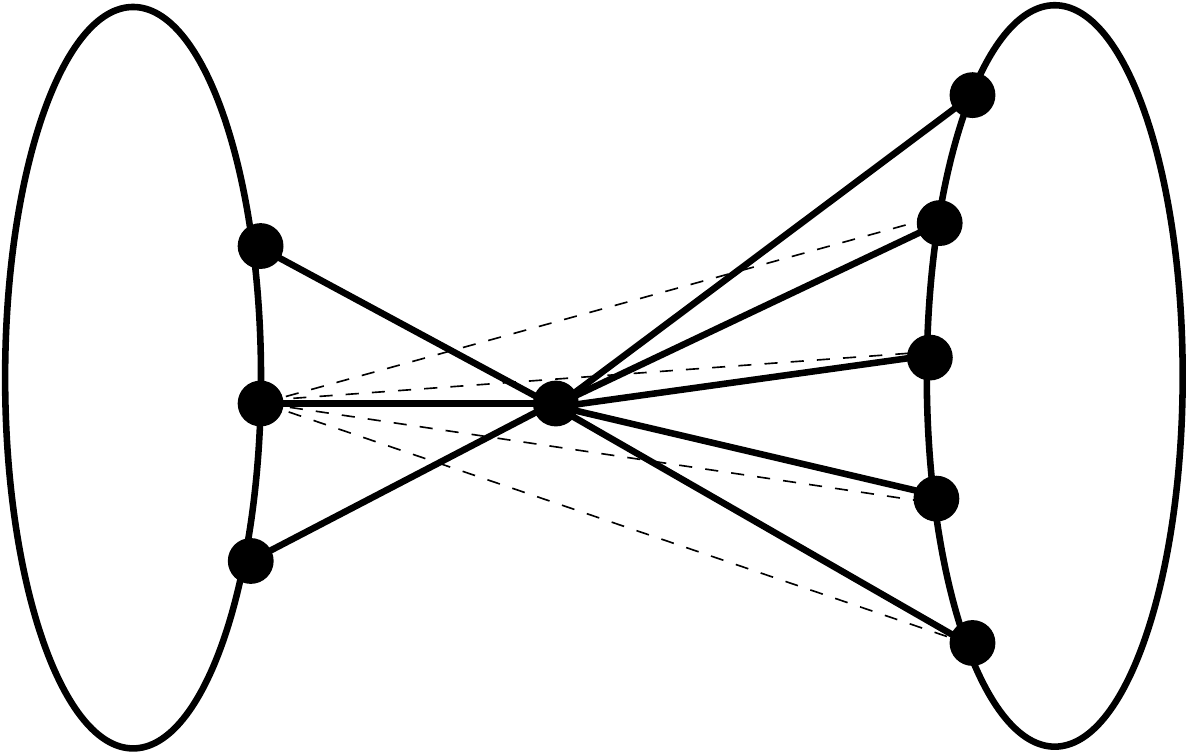} & \includegraphics[scale=0.3]{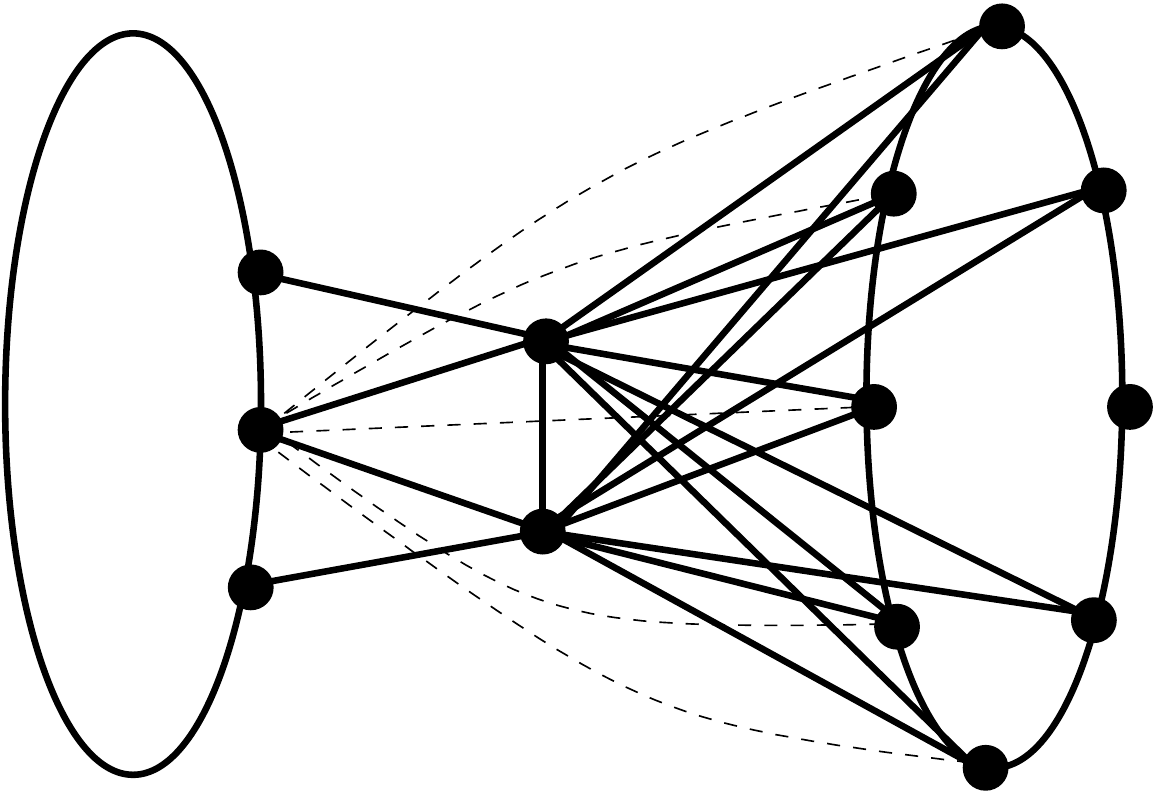} \\
a) & b)
\end{tabular}
\caption{The $1$-skeleta of two triangulations of $S^3$ with $f_1=\frac{1}{4}(f_0^2+2f_0+17)$. Starting from the join of two cycles remove the dashed edges and add the exceptional point(s) with the solid edges. In a) $|C_1|=|C_2|$ and $\deg(q,C_1)=3$. In b) $|C_2|=|C_1|+1$, $\deg(q,C_1)=\deg(q',C_1)=2$ and $\deg(q,C_2)=\deg(q',C_2)=|C_2|-1$.\label{rysunek-equal}}
\end{figure}
\end{center}

A careful analysis of the proofs in Section~\ref{section:exact} reveals two families of fascinating graphs which satisfy the equality $m=\frac{1}{4}(n^2+2n+17)$ for $n\geq n_0$. They appear in Proposition~\ref{prop:1-join-deg-3} and Proposition~\ref{prop:2-join-deg-2-adjacent}, see Figure~\ref{rysunek-equal}. This proves the claim made in Remark~\ref{remark:funnygraphs}; we omit the details.

Let us finish by stating a generalization of Theorem~\ref{thm:main} to higher dimensions.

\begin{conjecture}\label{conj:generalDIM}
For every $s\geq 2$ there exists a number $n_0=n_0(s)$ such that the following holds. 
If $M$ is a closed flag $(2s-1)$-manifold or a flag $(2s-1)$-GHS with $f_0\geq n_0$ vertices and $f_1$ edges then
\begin{equation}\label{eq:maxi}
f_1\leq f_0^2\cdot\frac{s-1}{2s}+f_0.
\end{equation}

Moreover, if $M$ satisfies
\begin{equation}\label{eq:maxi-2}
f_1> f_0^2\cdot\frac{s-1}{2s}+f_0\cdot\frac{s-1}{s}+\frac{7s+3}{2s}
\end{equation}
then $M$ is a join of $s$ polygons, in particular it is homeomorphic to $S^{2s-1}$.
\end{conjecture}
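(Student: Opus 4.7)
The plan is to generalize Theorem~\ref{thm:main} by induction on $s$, with the base case $s=2$ supplied by Theorem~\ref{thm:main} itself. Following the template of Section~\ref{sec:ProofOfExtremalLemma}, I would first reduce to an extremal graph theoretic question by introducing $s$-\emph{fascinating} graphs, namely $1$-skeleta $G = M^{(1)}$ of closed flag $(2s-1)$-manifolds. The defining conditions generalize Definition~\ref{def:fascinating}: for each clique $\sigma$ of size $k$ in $G$, the link $\lk_G \sigma$ is the $1$-skeleton of a flag $(2s-1-k)$-manifold of at least $2(2s-k)$ vertices. In particular, every edge-link is a flag $(2s-3)$-manifold (to which the inductive hypothesis applies), the link of each $s$-clique is a flag $(s-1)$-sphere, and the link of each $(2s-1)$-clique is the discrete $S^0$. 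By Dehn--Sommerville, $f_s$ is a fixed linear combination of $f_0,\ldots,f_{s-1}$, and the trivial bound $f_i \leq \binom{n}{i+1} = O(n^{i+1})$ yields $f_s = O(n^s) = o(n^{s+1})$, which is the input needed for supersaturation.

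For the upper bound in Part~1, I would apply the inductive hypothesis to each edge-link $\lk_G e$ on $d_e$ vertices, obtaining $f_1(\lk_G e) \leq d_e^2 (s-2)/(2(s-1)) + d_e$. Summing via the identities $\sum_e f_1(\lk_G e) = 6 f_3$ and $\sum_e d_e = 3 f_2$, together with Dehn--Sommerville relations among $f_1, f_2, f_3$, should yield the desired inequality after appropriate convexity/Cauchy--Schwarz manipulations; a direct Gal-type argument generalizing \cite[Cor.~3.1.7]{SRGal} is an alternative route. For the structural Part~2, I would apply the Erd\H{o}s--Simonovits supersaturation theorem~\cite{ErdSim:Supersaturated} in its general form with forbidden graph $K_{s+1}$: our $s$-fascinating $G$ has edge count above the Tur\'an threshold $\frac{s-1}{2s}n^2$ and only $o(n^{s+1})$ copies of $K_{s+1}$, so $V(G)$ admits an almost-Tur\'an partition $A_1 \sqcup \cdots \sqcup A_s$ with $\big||A_i|-n/s\big| \leq 1$ and $\sum_i e(G[A_i]) + \sum_{i<j} \overline e(G[A_i,A_j]) \leq \varepsilon n^2$. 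Mimicking Claims~\ref{claim:ci-small}--\ref{claim:poor-degree}, define analogs of $B_i, W_i, X_i$ and prove that most vertices are ``well-connected'' across the partition, leaving only $O(1)$ exceptional vertices.

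The structural refinement then uses the link axioms to show that each part $S_i$ induces a graph of bounded maximum degree, in fact close to a single cycle $C_i$, while the bipartite subgraphs $G[S_i, S_j]$ are almost complete (any long dense missing piece would produce a forbidden $K_{s,s}$ or worse in a vertex link). Adapting Definition~\ref{def:joinlike}, one introduces $t$-joinlike $s$-fascinating graphs with $s$ designated cycles and $t$ exceptional vertices, and proves analogs of Propositions~\ref{prop:1-join-deg-3} and \ref{prop:2-join-all}: every $t$-joinlike $s$-fascinating graph with $t \geq 1$ and edge count above the conjectural threshold either reduces to smaller $t$ or leads to a contradiction, leaving only the pure join $G = C_1 \ast \cdots \ast C_s$.

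The main obstacle is the absence of a sharp sparse-link theorem for $s \geq 3$. In the $s=2$ case, the planarity of vertex links (giving the Euler bound $3h-6$) powerfully limits the degrees of exceptional vertices and is decisive in the final contradictions of Section~\ref{section:2large}. For $s \geq 3$, vertex links are flag $(2s-2)$-manifolds, which can be quite dense --- moreover, $(2s-2)$ is even, so the conjecture does not apply directly to them, and one would have to bound their edge counts by descending further into their own edge-links (flag $(2s-4)$-manifolds, again even-dimensional). This weakens the sparsity tool considerably, and pushing through the rough estimates of Section~\ref{sec:ProofOfExtremalLemma} will likely require a more delicate multi-level inductive framework that simultaneously tracks edge counts and structural rigidity across several face codimensions. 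Finally, the exact-case analysis analogous to Section~\ref{section:exact} grows in complexity with $s$: the threshold $\frac{s-1}{s}f_0 + \frac{7s+3}{2s}$ is extrapolated from the $s=2$ boundary family, and identifying all near-extremal configurations for general $s$ is itself a nontrivial enumeration problem whose resolution is a prerequisite to a fully exact characterization.
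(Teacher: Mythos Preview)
This statement is a \emph{conjecture}, not a theorem: the paper does not prove it but only offers a brief sketch of a possible approach in the closing remarks. Your proposal is therefore not to be judged against a complete proof but against that sketch, and in its core it matches the paper's suggestion closely: use the middle Dehn--Sommerville relation to bound the number of $(s+1)$-cliques by $O(n^s)$, invoke the Erd\H{o}s--Simonovits stability theorem with forbidden graph $K_{s+1}$ to obtain an approximate $s$-partite structure, and then use the link conditions (links of $(2s-1-j)$-cliques being $j$-spheres for small $j$) to rigidify.

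Where you go beyond the paper is in two places. First, you propose a separate inductive route for the upper bound~\eqref{eq:maxi} via summing the inductive hypothesis over edge-links; the paper does not isolate Part~1 in this way, and this is a natural idea worth pursuing, though the convexity step you allude to needs to be made precise. Second, you are explicit about the obstacles, and you are right to flag them: the paper's sketch simply says ``one can try to exploit those conditions,'' whereas you correctly observe that the planarity of vertex links, which was decisive in Section~\ref{section:2large}, has no direct analogue for $s\geq 3$ since vertex links are then even-dimensional flag manifolds with no immediately usable sparsity bound. Your honest assessment that this gap blocks a complete proof is accurate and is precisely why the paper states this as a conjecture with details deferred elsewhere.
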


The maximal value in \eqref{eq:maxi} is achieved by the balanced join of $s$ cycles of lengths $f_0/s$. The expression in \eqref{eq:maxi-2} is the number of edges in the single edge-subdivision of such a join.

Let us sketch how one might prove this conjecture (the details will appear elsewhere). Fix $s\geq 2$ and denote $n=f_0$. First of all, $M$ is Eulerian and the ``middle'' Dehn-Sommerville equation $h_{s-1}=h_{s+1}$ can be rewritten in the form
$$f_s=sf_{s-1}+a_2f_{s-2}+\cdots+a_{s}f_0$$
for some coefficients $a_i$ depending only on $s$. It follows that the number of $(s+1)$-cliques in the $1$-skeleton $G=M^{(1)}$ is only $O(n^s)$. However, the number of edges in $G$ is above the Tur\'an bound for a complete, balanced $s$-partite graph, which is the maximizer of the number of edges among $K_{s+1}$-free graphs. By an application of the stability method we get that $G$ looks very similar to $K_{\ell,\ell,\ldots,\ell}$, where $\ell=n/s$. Next, as in the case of fascinating graphs, we see that in $G$ the link of every $(2s-1-j)$-clique is a triangulation of $S^j$ for $j=0,1,2$ (or for all $0\leq j\leq 2s-2$ if $M$ is a manifold) and one can try to exploit those conditions to rigidify the structure of $G$.

\subsection*{Acknowledgement.} We thank Anna Adamaszek, Martina Kubitzke and Eran Nevo for discussions on this and related topics and the anonymous referee for a number of remarks, including a correction to the proof of Claim~\ref{claim:GqqCdetermined}.

\bibliographystyle{alpha}
\bibliography{bibl}
\end{document}